\numberwithin{equation}{section}
\newtheorem{Theorem}{Theorem}[section]
\newtheorem{Corollary}[Theorem]{Corollary}
\newtheorem{Lemma}[Theorem]{Lemma}
\newtheorem{Proposition}[Theorem]{Proposition}
\newtheorem{Question}[Theorem]{Question}
\newtheorem{Conjecture}[Theorem]{Conjecture}
\theoremstyle{definition}
\newtheorem{Definition}[Theorem]{Definition}
\newtheorem{Example}[Theorem]{Example}
\newtheorem{Remark}[Theorem]{Remark}}
\begin{document}
\allowdisplaybreaks

\newcommand{\arXivNumber}{1908.08348}

\renewcommand{\thefootnote}{}

\renewcommand{\PaperNumber}{029}

\FirstPageHeading

\ShortArticleName{Non-Abelian Hodge Theory and Related Topics}

\ArticleName{Non-Abelian Hodge Theory and Related Topics\footnote{This paper is a~contribution to the Special Issue on Integrability, Geometry, Moduli in honor of Motohico Mulase for his 65th birthday. The full collection is available at \href{https://www.emis.de/journals/SIGMA/Mulase.html}{https://www.emis.de/journals/SIGMA/Mulase.html}}}

\Author{Pengfei HUANG~$^{\dag\ddag}$}

\AuthorNameForHeading{P.~Huang}

\Address{$^\dag$~School of Mathematics, University of Science and Technology of China, Hefei 230026, China} \EmailD{\href{mailto:pfhwang@mail.ustc.edu.cn}{pfhwang@mail.ustc.edu.cn}}

\Address{$^\ddag$~Laboratoire J.A.~Dieudonn\'e, Universit\'e C\^ote d'Azur, CNRS, 06108 Nice, France}
\EmailD{\href{mailto:pfhwang@unice.fr}{pfhwang@unice.fr}}

\ArticleDates{Received August 23, 2019, in final form April 08, 2020; Published online April 19, 2020}

\Abstract{This paper is a survey aimed on the introduction of non-Abelian Hodge theory that gives the correspondence between flat bundles and Higgs bundles. We will also introduce some topics arising from this theory, especially some recent developments on the study of the relevant moduli spaces together with some interesting open problems.}

\Keywords{non-Abelian Hodge theory; $\lambda$-connection; moduli space; conformal limit; Hitchin section; oper; stratification; twistor space}

\Classification{14D20; 14D21; 32G20; 53C07; 57N80}

\renewcommand{\thefootnote}{\arabic{footnote}}
\setcounter{footnote}{0}

\section{Introduction}

The non-Abelian Hodge theory, which is mainly based on the work of Corlette \cite{KC} and Do\-nald\-son~\cite{SD} on harmonic maps (flat bundles), and the work of Hitchin \cite{NH} and Simpson \cite{CS1} on Higgs bundles, gives a correspondence between semisimple flat bundles over a compact K\"ahler mani\-fold~$X$ and polystable Higgs bundles over the same manifold with vanishing Chern classes. This correspondence can be generalized to a statement concerning $\lambda$-flat bundles, a topic introduced by Deligne~\cite{D} and further developed by Simpson~\cite{CS6,CS7,CS8}. More precisely, for arbitrary $\lambda_1,\lambda_2\in\mathbb{C}$, Mochizuki~\cite{TM1} established a correspondence between the categories of polystable $\lambda_1$-flat bundles with vanishing Chern classes and polystable $\lambda_2$-flat bundles with vanishing Chern classes. In particular, when $\lambda_1=1$ and $\lambda_2=0$, this correspondence recovers the original non-Abelian Hodge correspondence. Therefore, in our setting, especially in this paper, non-Abelian Hodge theory means a correspondence between flat bundles, Higgs bundles and $\lambda$-flat bundles. All these objects are connected by the existence of pluri-harmonic metrics (see Section~\ref{sec2.1}), which produce a new object: harmonic bundle.

All the work mentioned above arise from the study of a classical problem in the non-Higgs setting: a correspondence between the existence of certain special metrics on a vector bundle and the stability of that bundle.\footnote{We usually call this correspondence the {\em Kobayashi--Hitchin correspondence}.} This correspondence builds a bridge between algebraic geometric side of stability and differential geometric side of existence of pluri-harmonic metrics. The study of this kind of problem can be dated back to Narasimhan and Seshadri's work on the stability of vector bundles \cite{NS}. Their theorem states that a vector bundle over a compact Riemann surface is stable if and only if it arises from an irreducible projectively unitary representation of the fundamental group of that Riemann surface. The Narasimhan--Seshadri theorem was latter reproved by Donaldson with a differential geometric method~\cite{SD1}, which relates the stability of vector bundles and the existence of certain special metrics. This celebrating idea was later generalized to higher-dimensional compact K\"ahler manifolds by Donaldson \cite{SD2}, Uhlenbeck and Yau \cite{UY}.

There is a natural filed arising from non-Abelian Hodge theory, the study of the corresponding moduli spaces of these objects. From Simpson's work on the construction of the moduli spaces \cite{CS4,CS5,CS6}, we have the following four moduli spaces:
\begin{itemize}\itemsep=0pt
\item {\em Betti moduli space} $M_{\rm B}(X,r)$: the moduli space of rank $r$ representations $\pi_1(X)\!\to\! {\rm GL}(r,\mathbb{C})$;
\item {\em de Rham moduli space} $M_{\rm dR}(X,r)$: the moduli space of rank $r$ flat bundles over~$X$;
\item {\em Dolbeault moduli space} $M_{\rm Dol}(X,r)$: the moduli space of semistable rank~$r$ Higgs bundles over $X$ with vanishing Chern classes;
\item {\em Hodge moduli space} $M_{\rm Hod}(X,r)$: the moduli space of semistable rank~$r$ $\lambda$-flat bundles over~$X$ with vanishing Chern classes.
\end{itemize}
The study of these moduli spaces arising from non-Abelian Hodge theory shows that they are also related. More precisely, the Riemann--Hilbert correspondence implies that $M_{\rm B}(X,r)$ and $M_{\rm dR}(X,r)$ are analytic isomorphic. Moreover, the Hodge moduli space $M_{\rm Hod}(X,r)$ has a fibration over $\mathbb{C}$ such that the fibers over 0 and 1 are exactly $M_{\rm Dol}(X,r)$ and $M_{\rm dR}(X,r)$, respectively. The underlying topological spaces of $M_{\rm Dol}(X,r)$ and $M_{\rm dR}(X,r)$ are homeomorphic, and are $C^\infty$ isomorphic over the stable points.

This paper is a survey aimed on the introduction of non-Abelian Hodge theory and some related topics, especially some recent developments on the study of the moduli spaces $M_{\rm dR}(X,r)$, $M_{\rm Dol}(X,r)$ and $M_{\rm Hod}(X,r)$. Meanwhile, some open problems will be introduced, these conjectures are very interesting and important on the deep study of moduli spaces. The non-Abelian Hodge theory first came to the world in 1980s, and obtained a vast development over past decades. Many branches of mathematics are shown to be related to this theory, and this theory also sheds light on the study of other topics. During the development of this theory, there are many good references written on the introduction of this theory from different points of view, for example, \cite{DA,ABCKT,LB,BC,DP,GR,Li,SR}. There are also many important applications of this theory, for example, this theory is shown to be a powerful tool on the study of higher Teichm\"uller theory \cite{OG,Wie}. Recently, in \cite{FR} the authors applied the non-Abelian Hodge theory to study the theory of vector valued modular forms and showed a conjecture on three-termed inequality. For the application of this theory on the study of $M_{\rm B}(X,r)$ (here $X$ is a compact Riemann surface), a~recent survey paper by Migliorini~\cite{LM} introduces a kind of compactification of $M_{\rm B}(X,r)$ by dual boundary complex. The geometric $P=W$ conjecture arises from this compactification, which is a~geometric analogue of the cohomological $P=W$ conjecture on the identification between the perverse filtration on the cohomology of $M_{\rm Dol}(X,r)$, and the weight filtration on the cohomology of~$M_{\rm B}(X,r)$.

This survey is organized as follows. In the following section, we first give a brief introduction to non-Abelian Hodge theory by introducing four objects: Higgs bundles, flat bundles, $\lambda$-flat bundles, and harmonic bundles. This is based on the work of Corlette~\cite{KC}, Donaldson~\cite{SD}, Hitchin \cite{NH}, Simpson~\cite{CS1} and Mochizuki~\cite{TM1}. Then we introduce a recent development of this theory, that is the systematic work by Mochizuki on the Kobayashi--Hitchin type correspondence between periodic monopoles and difference modules~\cite{TM3,TM4,TM5}. The next three sections can be treated as various applications of non-Abelian Hodge theory, we will fix on compact Riemann surfaces. In the third section, we introduce the conformal limit conjecture proposed by Gaiotto~\cite{Gai}, which identifies two objects in the corresponding moduli spaces. More precisely, it relates the Hitchin section in~$M_{\rm Dol}$, and the space of opers in~$M_{\rm dR}$. This conjecture was recently confirmed by the authors of~\cite{DFKMMN}, and with some generalization by the authors of~\cite{CW}. The fourth section is aimed on the study of moduli spaces by generalizing the natural $\mathbb{C}^*$-action on~$M_{\rm Dol}$ to the $\mathbb{C}^*$-action on $M_{\rm Hod}$. This is based on Simpson's work~\cite{CS8}, which gives the Bia{\l}ynicki-Birula stratification of~$M_{\rm Hod}(X,r)$ into locally closed subsets. In particular, it gives the {\em oper stratification} of~$M_{\rm dR}(X,r)$, and recovers the Bia{\l}ynicki-Birula stratification of~$M_{\rm Dol}(X,r)$. The Hitchin sections and opers introduced in the third section, appears as a special stratum of the stratifications of $M_{\rm Dol}$ and $M_{\rm dR}$, respectively. Some interesting conjectures on moduli spaces proposed by Simpson \cite{CS8} are introduced in this section. Moreover, we give an explicit description between the Harder--Narasimhan filtration and the Simpson filtration for rank~3 flat bundles. The last section is aimed on the study of twistor spaces. We first introduce the Hitchin's twistor construction for general hyper-K\"ahler manifolds, which applies in particular to the moduli spaces with hyper-K\"ahelr structure. Then we introduce Deligne's interpretation of Hitchin twistor space in terms of flat $\lambda$-connections, which produces a new twistor space which is analytic isomorphic to the old one. But Deligne's construction is from a different viewpoint, that is, by gluing the moduli space~$M_{\rm Hod}(X,r)$ over~$X$ and the moduli space $M_{\rm Hod}(\bar{X},r)$ over the conjugate~$\bar{X}$. Finally, we introduce our recent work \cite{H} on a new interpretation of the Deligne--Hitchin twistor space. It can be treated as a generalization of Deligne's construction for the case of compact Riemann surfaces. We define de Rham sections for this twistor space, and show they have weight~1 property, which implies the twistor space contains ample rational curves. As an application, we obtain a Torelli-type theorem for this twistor space.

\section{Non-Abelian Hodge theory}

\subsection[Higgs bundles, flat bundles and $\lambda$-flat bundles]{Higgs bundles, flat bundles and $\boldsymbol{\lambda}$-flat bundles}\label{sec2.1}

Let $(X,\omega)$ be a compact K\"ahler manifold, where $\omega$ is the K\"ahler form. We have three different ways to define a harmonic bundle: from Higgs bundles, from flat bundles, and from $\lambda$-flat bundles. The non-Abelian Hodge theory tells us in fact they are equivalent, that is, the non-Abelian Hodge theory gives a correspondence between these objects.

To begin this theory, we first give the following notations for reader's convenience:
\begin{itemize}\itemsep=0pt
\item $C^\infty(X,\mathbb{C})$: the space of smooth complex valued functions on $X$;
\item $E$: a complex vector bundle over $X$;
\item $T^*_\mathbb{C}X$: complexified cotangent bundle, it has a decomposition
\begin{gather*}
T^*_\mathbb{C}X=(T^*X)^{1,0}\bigoplus(T^*X)^{0,1};
\end{gather*}
\item $\Omega^{p,q}_X$: the exterior algebra bundle of $(p,q)$-type, that is,
\begin{gather*}
\Omega^{p,q}_X=\bigwedge\nolimits^{\!p}\big((T^*X)^{1,0}\big) \bigotimes\bigwedge\nolimits^{\!q}\big((T^*X)^{0,1}\big);
\end{gather*}
\item $\mathcal{A}^k(X,E)$: the space of smooth complex $k$-forms on $X$ with values in $E$, that is,
\begin{gather*}
\mathcal{A}^k(X,E)=C^\infty(X,E\bigotimes\bigwedge\nolimits^{\!k}(T^*_\mathbb{C}X));
\end{gather*}
\item $\mathcal{A}^{p,q}(X,E)$: the space of smooth $(p,q)$-forms on $X$ with values on $E$, that is,
\begin{gather*}
\mathcal{A}^{p,q}(X,E)=C^\infty\big(X,E\bigotimes\Omega^{p,q}_X\big).
\end{gather*}
\end{itemize}
Then we have the decomposition
\begin{gather*}
\mathcal{A}^k(X,E)=\bigoplus_{p+q=k}\mathcal{A}^{p,q}(X,E).
\end{gather*}

\subsubsection{Higgs bundles and harmonic bundles}
A {\em holomorphic structure} on $E$ is a $\mathbb{C}$-linear operator $\bar{\partial}_E\colon \mathcal{A}^0(X,E)\to\mathcal{A}^{0,1}(X,E)$ such that
\begin{enumerate}\itemsep=0pt
\item[(1)] $\bar{\partial}_E(fs)=\bar{\partial}(f)s+f\bar{\partial}_E(s)$ for any $f\in C^\infty(X,\mathbb{C})$ and $s\in\mathcal{A}^0(X,E)$,
\item[(2)] $(\bar{\partial}_E)^2=0$;
\end{enumerate}
where the second condition is obtained under the natural extension
$\bar{\partial}_E\colon \mathcal{A}^{p,q}(X,E)\to$ \linebreak $\mathcal{A}^{p,q+1}(X,E)$
for  any integers $p, q\geq0$. Such a pair $\big(E,\bar{\partial}_E\big)$ is called a {\em holomorphic vector bundle}, sometimes we use a single $\mathcal{E}$ to denote a holomorphic vector bundle. By Koszul--Malgrange theo\-rem, this is equivalent to the usual definition of a holomorphic vector bundle by holomorphic transition functions.

\begin{Definition}A {\em Higgs bundle} over $X$ is a holomorphic vector bundle $\big(E,\bar{\partial}_E\big)$ together with a~map $\theta\colon E\to E\otimes\Omega_X^1$ which is holomorphic and integrable, i.e., $\bar{\partial}_E(\theta)=0$ and $\theta\wedge\theta=0$. Such a~map $\theta$ is called a {\em Higgs field} and the triple $(E,\bar{\partial}_E,\theta)$ denotes a Higgs bundle.
\end{Definition}

\begin{Definition}\label{def2.2}
A Higgs bundle $(E,\bar{\partial}_E,\theta)$ is called {\em stable} (resp.\ {\em semistable}) if for any proper coherent subsheaf $F$ of $0<\operatorname{rk}(F)<\operatorname{rk}(E)$ and $\theta(F)\subseteq\theta\otimes\Omega_X^1$ such that $E/F$ is torsion-free, we have
\begin{align*}
\mu(F)<  (\text{resp.} \ \leq)\ \mu(E),
\end{align*}
where $\mu(E):= \frac{\deg(E)}{\operatorname{rk}(E)}$ denotes the {\em slope} of $E$, and $\deg(E):=\int_Xc_1(E)\cdot [\omega]^{n-1}$ is the {\em degree} of~$E$. It is called {\em polystable} if it is the direct sum of stable Higgs bundles of the same slope as~$\mu(E)$.
\end{Definition}

Given a Hermitian metric $h$ on the Higgs bundle $\big(E,\bar{\partial}_E,\theta\big)$, then $h$ and $\bar{\partial}_E$ uniquely determine an $(1,0)$-type operator $\partial_{E,h}$ such that $\nabla_h:=\partial_{E,h}+\bar{\partial}_E$ is a unitary connection. Here a {\em connection}~$\nabla$ on a vector bundle~$E$ is an operator $\nabla\colon \mathcal{A}^0(X,E)\to\mathcal{A}^1(X,E)$ that satisfies the Leibniz rule $\nabla(fs)=df\otimes s+f\nabla(s)$ for any $f\in C^\infty(X,\mathbb{C})$ and $s\in\mathcal{A}^0(X,E)$. A~connection~$\nabla$ is said to be {\em unitary} with respect to the metric $h$ if it preserves this metric, i.e., if $d h(u,v)=h(\nabla(u),v)+h(u,\nabla(v))$ for any $u,v\in\mathcal{A}^0(X,E)$. This unique unitary connection is called the {\em Chern connection}, and its curvature $F_h:=\nabla_h\comp\nabla_h$ is called the {\em Chern curvature}. A~connection $\nabla$ on $E$ is said to be {\em flat} if its curvature $F_\nabla:=\nabla\comp\nabla$ vanishes under the natural extension $\nabla\colon \mathcal{A}^k(X,E)\to\mathcal{A}^{k+1}(X,E)$ for any integer $k\geq0$.

With $h$, the Higgs field $\theta$ determines an adjoint operator $\theta_h^\dagger\in\mathcal{A}^{0,1}(X,\operatorname{End}(E))$ by
\begin{gather*}
h(\theta(u),v)=h\big(u,\theta_h^\dagger(v)\big)
\end{gather*}
for any $u,v\in \mathcal{A}^0(X,E)$. The new obtained connection $\mathbb{D}^1:=\partial_{E,h}+\bar{\partial}_E +\theta+\theta_h^\dagger$ is usually called a~{\em Hitchin--Simpson connection}, and its curvature $F_{(\bar{\partial}_E,\theta,h)}:=\mathbb{D}^1\comp\mathbb{D}^1$ is called a {\em Hitchin--Simpson curvature}.

\begin{Definition}$h$ is called a {\em pluri-harmonic metric} on the Higgs bundle $\big(E,\bar{\partial}_E,\theta\big)$ if the Hitchin--Simpson connection $\mathbb{D}^1:=\partial_{E,h}+\bar{\partial}_E+\theta+\theta_h^\dagger$ is flat, that is, if $\big(E,\mathbb{D}^1\big)$ is a flat bundle. A~Higgs bundle with a pluri-harmonic metric is called a {\em harmonic bundle}, and denoted as $\big(E,\bar{\partial}_E,\theta,h\big)$.
\end{Definition}

\subsubsection{Flat bundles and harmonic bundles}

A {\em flat bundle} over $X$ is a pair $(E,\nabla)$ that consists of a complex vector bundle $E$ and a flat connection $\nabla$.

\begin{Definition}
A flat bundle $(E,\nabla)$ is called {\em irreducible} if it has no non-zero proper flat subbundle, and it is called {\em semisimple} if it is a direct sum of irreducible flat bundles.
\end{Definition}

Let $(E,\nabla)$ be a flat bundle over $X$ with a Hermitian metric $h$, then $h$ induces a unique decomposition of $\nabla$:
\begin{align*}
\nabla=\nabla_h+\Phi_h
\end{align*}
such that:
\begin{itemize}\itemsep=0pt
\item $\nabla_h$ is a unitary connection;
\item $\Phi_h\in\mathcal{A}^1(X,\operatorname{End}(E))$ is a self-adjoint operator.
\end{itemize}
To see the decomposition, we first decompose $\nabla$ into its $(1,0)$ and $(0,1)$ parts: $\nabla=\nabla^{1,0}+\nabla^{0,1}$. Then define a unique $(0,1)$-type operator $\delta_{E,h}''$ such that $\nabla^{1,0}+\delta_{E,h}''$ preserves~$h$ and a unique $(1,0)$-type operator $\delta_{E,h}'$ such that $\nabla^{0,1}+\delta_{E,h}'$ preserves~$h$. Finally, let
\begin{alignat*}{3}
&\partial_{E,h}=\frac{1}{2}\big(\nabla^{1,0}+\delta_{E,h}'\big),\qquad && \bar{\partial}_{E,h}=\frac{1}{2}\big(\nabla^{0,1}+\delta_{E,h}''\big),&\\
& \theta_{E,h}=\frac{1}{2}\big(\nabla^{1,0}-\delta_{E,h}'\big),\qquad && \theta_{V,h}^\dagger=\frac{1}{2}\big(\nabla^{0,1}-\delta_{E,h}''\big),&
\end{alignat*}
then $\nabla_h:=\partial_{E,h}+\bar{\partial}_{E,h}$ and $\Phi_h:=\theta_{E,h}+\theta_{E,h}^\dagger$ is the desired decomposition.

\begin{Definition}$h$ is called a {\em pluri-harmonic metric} on the flat bundle $(E,\nabla)$ if
\[ \big(\bar{\partial}_{E,h}+\theta_{E,h}\big)^2=0,
\] that is, if $\big(E,\bar{\partial}_{E,h},\theta_{E,h}\big)$ is a Higgs bundle. A flat bundle with a pluri-harmonic metric is called a {\em harmonic bundle}, and denoted as $(E,\nabla,h)$.
\end{Definition}

\subsubsection[$\lambda$-flat bundles and harmonic bundles]{$\boldsymbol{\lambda}$-flat bundles and harmonic bundles}

For $\lambda\in\mathbb{C}$, the notion of flat $\lambda$-connection as the interpolation of usual flat connection and Higgs field was suggested by Deligne~\cite{D}, illustrated by Simpson in~\cite{CS6} and further studied in~\cite{CS7,CS8}.

Let $\mathcal{E}=\big(E,\bar{\partial}_E\big)$ be a holomorphic vector bundle over~$X$. The following definitions of $\lambda$-flat bundles mainly come from~\cite{CS6} and~\cite{TM1} (see also~\cite{H}).

\begin{Definition}\quad
\begin{enumerate}\itemsep=0pt
 \item[(1)] A \emph{holomorphic $\lambda$-connection} on $\mathcal{E}$ is a $\mathbb{C}$-linear map $D^\lambda\colon \mathcal{E}\to \mathcal{E}\otimes\Omega_X^{1}$ that satisfies the following $\lambda$-twisted Leibniz rule:
 \begin{gather*}
 D^\lambda(fs)=fD^\lambda s+\lambda s\otimes df,
 \end{gather*}
where $f$ and $s$ are holomorphic sections of $\mathcal{O}_X$ and $E$, respectively. If $\big(D^\lambda+\bar{\partial}_E\big)\comp \big(D^\lambda+\bar{\partial}_E\big)=0$ under the natural extension $D^\lambda\colon \mathcal{E}\otimes\Omega_X^{p}\to \mathcal{E}\otimes\Omega_X^{p+1}$ for any integer $p\geq0$, then we call $D^\lambda$ a (holomorphic) \emph{flat $\lambda$-connection}, and $\big(\mathcal{E},D^\lambda\big)$ a (holomorphic) \emph{$\lambda$-flat bundle}.
 \item[(2)] A \emph{$C^\infty$ $\lambda$-connection} on $E$ is a $\mathbb{C}$-linear map $\mathbb{D}^\lambda\colon \mathcal{A}^0(X,E)\to \mathcal{A}^1(X,E)$ that satisfies the following $\lambda$-twisted Leibniz rule:
 \begin{align*}
 \mathbb{D}^\lambda(fs)=f\mathbb{D}^\lambda s+\lambda s\otimes\partial f+s\otimes\bar{\partial}f,
 \end{align*}
where $f\in C^\infty(X,\mathbb{C})$ and $s\in\mathcal{A}^0(X,E)$. If $\mathbb{D}^\lambda\comp\mathbb{D}^\lambda=0$ under the natural extension $\mathbb{D}^\lambda\colon \mathcal{A}^p(X,E)\to \mathcal{A}^{p+1}(X,E)$ for any integer $p\geq0$, then we call $\mathbb{D}^\lambda$ a ($C^\infty$) \emph{flat $\lambda$-connection}, and $\big(E,\mathbb{D}^\lambda\big)$ a ($C^\infty$) \emph{$\lambda$-flat bundle}.
\end{enumerate}
\end{Definition}

Clearly, $\lambda=1$ and $0$ correspond to the usual flat connection and Higgs field, respectively. If we work on a compact Riemann surface, then every $\lambda$-connection is automatically flat. Giving a holomorphic flat $\lambda$-connection~$D^\lambda$ on~$\mathcal{E}$ is equivalent to giving a $C^\infty$ flat $\lambda$-connection $\mathbb{D}^\lambda$ on~$E$. From now on, we will denote a $\lambda$-flat bundle as $\big(E,\bar{\partial}_E,D^\lambda\big)$ in holomorphic category and $\big(E,\mathbb{D}^\lambda\big)$ in $C^\infty$ category.

\begin{Definition}A $\lambda$-flat bundle $\big(E,\mathbb{D}^\lambda\big)$ over $X$ is called \emph{stable} (resp.\ \emph{semistable}) if for any $\lambda$-flat subbundle $\big(V,\mathbb{D}^\lambda|_V\big)$ of $0<\operatorname{rk}(V)<\operatorname{rk}(E)$, we have the following inequality
\begin{gather*}
\mu(V)< (\text{resp.} \leq) \  \mu(E),
\end{gather*}
where $\mu(E)$ is the slope of the bundle $E$ as in Definition~\ref{def2.2}. It is \emph{polystable} if it decomposes as a direct sum of stable $\lambda$-flat bundles with the same slope.
\end{Definition}

Let $\big(E,\mathbb{D}^\lambda\big)$ be a $\lambda$-flat bundle, and $h$ be a Hermitian metric on $E$. Then $h$ induces a unique decomposition of $\mathbb{D}^\lambda$:
\begin{gather*}
\mathbb{D}^\lambda=\lambda\partial_h+\theta_h+\bar{\partial}_h+\lambda\theta_h^\dagger
\end{gather*}
such that $\nabla_h :=\partial_h+\bar{\partial}_h$ is a unitary connection and $\Phi_h :=\theta_h+\theta_h^\dagger\in\mathcal{A}^1(X,\operatorname{End}(E))$ is a~self-adjoint operator.

In fact, when $\lambda=0$, this is the trivial decomposition into different types which defines a Higgs bundle structure, that is, $\mathbb{D}^0=\bar{\partial}_E+\theta$ for $\bar{\partial}_E$ defines a holomorphic structure on $E$ and $\theta$ defines a Higgs filed. When $\lambda\neq0$, we decompose $\mathbb{D}^\lambda$ into its $(1,0)$-part $d_E'$ and $(0,1)$-part~$d_E''$ that defines a holomorphic structure on~$E$.
From~$h$ and~$d_E'$, we have a (0,1)-operator~$\delta_h''$ determined by the condition
\begin{gather*}
\lambda\partial h(u,v)=h(d_E'u,v)+h(u,\delta_h''v).
\end{gather*}
Similarly, $h$ and $d_E''$ provides a $(1,0)$-operator $\delta_h'$ via the condition
\begin{gather*}
\bar{\partial}h(u,v)=h(d_E''u,v)+h(u,\delta_h'v).
\end{gather*}
One easily checks that
\begin{gather*}
\delta_h'(fs) =f\delta'_h(s)+\partial(f)\otimes s,\\
\delta_h''(fs) =f\delta_h''(s)+\bar{\lambda}\bar{\partial}(f)\otimes v
\end{gather*}
for any $f\in C^\infty(X,\mathbb{C})$ and $s\in\mathcal{A}^0(X,E)$.
We introduce the following four operators
\begin{alignat*}{3}
& \partial_h :=\frac{1}{1+|\lambda|^2}\big(\bar{\lambda}d_E'+\delta_h'\big),\qquad &&
\bar{\partial}_h:=\frac{1}{1+|\lambda|^2}\big(d_E''+\lambda\delta_h''\big),&\\
& \theta_h :=\frac{1}{1+|\lambda|^2}\big(d_E'-\lambda\delta_h'\big),\qquad &&
\theta_h^\dagger:=\frac{1}{1+|\lambda|^2}\big(\bar{\lambda}d_E''-\delta_h''\big).&
\end{alignat*}
They satisfy that
\begin{alignat*}{3}
& d_E'=\lambda\partial_h+\theta_h,\qquad && d_E''=\bar{\partial}_h+\lambda\theta_h^\dagger,&\\
& \delta_h'=\partial_h-\bar{\lambda}\theta_h,\qquad &&
\delta_h''=\bar{\lambda}\bar{\partial}_h-\theta_h^\dagger.&
\end{alignat*}
By direct calculation, $\partial_h$ and $\bar{\partial}_h$ obey the usual Leibniz rule, so $D_h=\partial_h+\bar{\partial}_h$ is a usual connection. Moreover, $D_h$ is a unitary connection with respect to $h$. By definition, $\theta_h\in C^\infty\big(X,\operatorname{End}(E)\otimes\Omega_X^{1,0}\big)$, $\theta_h^\dagger\in C^\infty\big(X,\operatorname{End}(E)\otimes\Omega_X^{0,1}\big)$, and $\theta_h^\dagger$ is the adjoint of $\theta_h$ in the sense that $h(\theta_h(u),v)=h\big(u,\theta_h^\dagger(v)\big)$.

\begin{Definition}$h$ is called a \emph{pluri-harmonic metric} on the $\lambda$-flat bundle $\big(E,\mathbb{D}^\lambda\big)$ if $(\bar{\partial}_h+\theta_h)^2=0$, that is, if $\big(E,\bar{\partial}_h,\theta_h\big)$ is a Higgs bundle. A $\lambda$-flat bundle with a pluri-harmonic metric is called a \emph{harmonic bundle}, and denoted as $\big(E,\mathbb{D}^\lambda,h\big)$.
\end{Definition}

\subsubsection{Non-Abelian Hodge correspondence}

The non-Abelian Hodge theory, arises from the existence of pluri-harmonic metric for the objects we introduced above. This is based on the work of Donaldson~\cite{SD} and Corlette~\cite{KC} on harmonic maps (flat bundles), Hitchin~\cite{NH} and Simpson~\cite{CS1} on Higgs bundles, and Mochizuki~\cite{TM1} on $\lambda$-flat bundles.

\begin{Theorem}\label{thm2.9}
Let $(X,\omega)$ be a compact K\"ahler manifold. Then
\begin{enumerate}\itemsep=0pt
\item[$(1)$] $($Donaldson~{\rm \cite{SD}}, Corlette {\rm \cite{KC})} A flat bundle $(E,\nabla)$ over $X$ admits a pluri-harmonic metric if and only if it is semisimple;
\item[$(2)$] $($Hitchin~{\rm \cite{NH}}, Simpson~{\rm \cite{CS1})} A Higgs bundle $\big(E,\bar{\partial}_E,\theta\big)$ over $X$ admits a pluri-harmonic metric if and only if it is polystable with vanishing Chern classes;
\item[$(3)$] $($Mochizuki~{\rm \cite{TM1})} A $\lambda$-flat bundle $\big(E,\mathbb{D}^\lambda\big)$ $(\lambda\neq0)$ over~$X$ admits a pluri-harmonic metric if and only if it is polystable with vanishing Chern classes.
\end{enumerate}
Moreover, in all above cases, the pluri-harmonic metric is unique up to scalar multiplicities.
\end{Theorem}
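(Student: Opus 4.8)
The plan is to treat all three statements through a single analytic principle: a pluri-harmonic metric $h$ on $E$ is the solution of a second-order elliptic system whose solvability is governed by the relevant semisimplicity/stability hypothesis. In every case $h$ may be viewed as a $\rho$-equivariant map from the universal cover $\widetilde{X}$ into the symmetric space ${\rm GL}(r,\mathbb{C})/{\rm U}(r)$ of positive Hermitian forms, where $\rho$ is the monodromy representation, and the pluri-harmonic condition is precisely that this map is harmonic (equivalently, that the curvature-type tensor built from the decomposition of $\nabla$, $\mathbb{D}^1$, or $\mathbb{D}^\lambda$ is annihilated). I would split each equivalence into its two implications and dispose of the easier necessity direction first.

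For the necessity direction---a pluri-harmonic metric forces semisimplicity (case (1)) or polystability with vanishing Chern classes (cases (2),(3))---the governing fact is that $h$ induces orthogonal splittings compatible with the structure. In case (1), if $(E,\nabla)$ possesses a flat subbundle, harmonicity makes its $h$-orthogonal complement flat as well, so $(E,\nabla)$ decomposes; iterating yields a direct sum of irreducibles. In cases (2) and (3) the Chern--Weil identities applied to the flat Hitchin--Simpson connection force the vanishing of the Chern numbers, while integrating the Hermitian--Einstein-type equation over any destabilizing $\theta$-invariant (resp.\ $\mathbb{D}^\lambda$-invariant) subsheaf violates the slope inequality unless equality holds and the subsheaf splits off, giving polystability.

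For the existence direction I would run Donaldson's heat flow, evolving an initial metric $h_t$ along the negative gradient of the associated energy (the Corlette energy in case (1), the Yang--Mills--Higgs functional in cases (2),(3)). The scheme is standard in outline: short-time existence and smoothing from parabolic theory; the monotone decrease of the energy and a Bochner-type estimate for the curvature density; and long-time convergence to a critical point. The decisive step is the Uhlenbeck--Yau dichotomy: if the flow does not converge, one extracts a weak limit from which a regularity argument manufactures a coherent destabilizing subsheaf, contradicting the (semi)stability assumption. Uniqueness up to scalars then follows from the geodesic convexity of the energy on the non-positively curved space of metrics: two pluri-harmonic metrics are linked by a geodesic along which the energy is both convex and critical at the endpoints, hence affine, so the geodesic consists of harmonic metrics and its velocity is a parallel self-adjoint endomorphism, which irreducibility/stability forces to be scalar on each stable factor.

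The main obstacle, and the genuine novelty of Mochizuki's case (3), lies in the a priori control of $h$ at the boundary of the non-compact target ${\rm GL}(r,\mathbb{C})/{\rm U}(r)$, i.e.\ in ruling out escape to infinity. For $\lambda\neq0$ the operator $\mathbb{D}^\lambda$ is neither a flat connection nor a Higgs field, and the $\lambda$-twisted Leibniz rule mixes the $(1,0)$ and $(0,1)$ parts through the factor $1+|\lambda|^2$ appearing in the operator identities above; hence neither Corlette's harmonic-map estimates nor Simpson's Higgs-bundle estimates transfer verbatim. I expect the hard part to be obtaining the required analytic bounds uniformly as $\lambda$ interpolates between the flat ($\lambda=1$) and Higgs ($\lambda=0$) endpoints, and closing the estimates for the Hitchin--Simpson-type curvature without a fixed holomorphic structure to lean on.
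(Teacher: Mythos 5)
Your skeleton---$\rho$-equivariant harmonic maps into ${\rm GL}(r,\mathbb{C})/{\rm U}(r)$, a Donaldson-type heat flow, the Uhlenbeck--Yau extraction of a destabilizing subsheaf, and uniqueness via convexity of the energy along geodesics of metrics---is exactly the strategy of the original works that the paper's proof sketch summarizes, but there are two genuine gaps at precisely the points the paper is careful about. First, you assert that the pluri-harmonic condition ``is precisely'' harmonicity of the equivariant map. On a compact K\"ahler manifold of dimension greater than one this is a theorem, not a definition: the flow in case (1) produces only a \emph{harmonic} metric, and one must invoke the Siu--Sampson Bochner argument (every harmonic map from a compact K\"ahler manifold to a symmetric space of non-compact type is pluri-harmonic) to get $\big(\bar{\partial}_{E,h}+\theta_{E,h}\big)^2=0$ and hence the Higgs structure; the paper cites this step explicitly. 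Second, in cases (2) and (3) the limit of your flow is only a Hermitian--Einstein metric, $\Lambda_\omega F_{(\bar{\partial}_E,\theta,h)}=\alpha\,{\rm Id}$, which is strictly weaker than pluri-harmonicity, i.e., $F_{(\bar{\partial}_E,\theta,h)}=0$. Passing from Hermitian--Einstein to flat requires the hypothesis of vanishing first and second Chern characters together with the Bogomolov--Gieseker-type integral identity that bounds the $L^2$-norm of the full Hitchin--Simpson curvature by Chern numbers; you invoke Chern--Weil only in the necessity direction, so your existence argument stops short of the stated conclusion.

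Your diagnosis of case (3) is also off target. For fixed $\lambda\neq0$ the $(0,1)$-part $d_E''$ of $\mathbb{D}^\lambda$ \emph{does} define a holomorphic structure (flatness gives $(d_E'')^2=0$, as the paper notes in Section 2.1.3), and the four operators $\partial_h$, $\bar{\partial}_h$, $\theta_h$, $\theta_h^\dagger$ recorded there supply the analogues of the Chern connection and the Higgs field, so Simpson's analytic scheme runs for each fixed $\lambda$ separately: no estimates uniform in $\lambda$ are needed, since the theorem is not a statement about interpolation. Beware also of the apparent shortcut of rescaling $\mathbb{D}^\lambda\mapsto\lambda^{-1}\mathbb{D}^\lambda$ to reduce (3) to (1): writing $\lambda^{-1}\mathbb{D}^\lambda=\partial_h+\lambda^{-1}\theta_h+\bar{\partial}_h+\lambda\theta_h^\dagger$, the $1$-form part is $h$-self-adjoint only when $\lambda=\bar{\lambda}^{-1}$, so the same metric is pluri-harmonic for both structures only when $|\lambda|=1$; the identification of moduli is algebraic, not an identification of harmonic metrics. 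What is genuinely new in Mochizuki's contribution, as the paper indicates, is the parabolic/tame setting over higher-dimensional quasi-projective varieties with normal crossing divisor and the $\varepsilon$-perturbation of parabolic weights---not analytic control as $\lambda$ runs between the Higgs and de Rham endpoints.
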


\looseness=-1 We give a brief description of above theorem here. By the work of Corlette and Donaldson, for any given flat bundle $(E,\nabla)$ of rank $r$, let $\rho\colon \pi_1(X)\to {\rm GL}(r,\mathbb{C})$ be the associated monodromy representation. If $(E,\nabla)$ is semisimple, then there is a unique map $h_\rho\colon \tilde{X}\to {\rm GL}(r,\mathbb{C})/O(r)$ which is $\rho$-equivariant and harmonic (i.e., $h_\rho$ achieves the minimum of the energy functional $E(h_\rho):=\int_X|dh_\rho|^2d\textrm{vol}$). We call such harmonic map a harmonic metric. Then by a theorem of Siu and Sampson on harmonic maps, over any compact K\"ahler manifold, each harmonic map is pluri-harmonic, i.e., $D^{0,1}\big((df_\rho)^{0,1}\big)=0$, where~$D$ is the descend of the pull back of the canonical Levi-Civita connection on the tangent bundle of the symmetric space $ {\rm GL}(r,\mathbb{C})/O(r)$ by~$h_\rho$. Therefore, it induces a Higgs bundle structure. On the other hand, by the work of Hitchin and Simpson, for any given Higgs bundle $\big(E,\bar{\partial}_E,\theta\big)$ of rank~$r$. If it is polystable, then it admits a Hermitian--Einstein metric $h$, i.e., the Hitchin--Simpson curvature $F_{(\bar{\partial}_E,\theta,h)}$ satisfies the Hermitian--Yang--Mills equation $\Lambda_\omega F_{(\bar{\partial}_E,\theta,h)}=\alpha{\rm Id}$ for some constant~$\alpha$. Moreover, if such Higgs bundle has vanishing first and second Chern characters, then a Bogomolov--Gieseker inequality will imply $F_{(\bar{\partial}_E,\theta,h)}=0$. Therefore, it induces a flat bundle structure. This ideas were generalized by Mochizuki to study the case of $\lambda$-flat bundles \cite{TM1}, where he considered this object over higher-dimensional smooth complex projective varieties with a simple normal crossing divisor. In~\cite{TM1}, he built the Kobayashi--Hitchin correspondence between polystable parabolic $\lambda$-flat bundles with vanishing first and second parabolic Chern characters and tame harmonic bundles. Mochizuki's method relies on a $\varepsilon$-perturbation technique developed by himself on the study of higher-dimensional Kobayashi--Hitchin correspondence between polystable parabolic Higgs bundles with vanishing first and second parabolic Chern characters and tame harmonic bundles~\cite{TM}.

Therefore, we obtain an equivalence between the categories of these objects:

\begin{Corollary}[non-Abelian Hodge correspondence]\label{thm2.6}
Let $(X,\omega)$ be a compact K\"ahler manifold. Then for each $\lambda\in\mathbb{C}$, we have one to one correspondence between the equivalence classes of polystable $\lambda$-flat bundles with vanishing Chern classes, the equivalence classes of polystable Higgs bundles with vanishing Chern classes, and the equivalence classes of semisimple flat bundles, through harmonic bundles.
\end{Corollary}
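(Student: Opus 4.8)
The statement is a formal consequence of Theorem~\ref{thm2.9}; its content is that all three types of object are encoded by a single harmonic bundle. The plan is to build the correspondence by passing through the category of harmonic bundles in both directions and then to check that the two passages are mutually inverse at the level of equivalence classes. All of the analytic substance—existence and uniqueness of pluri-harmonic metrics—is already contained in Theorem~\ref{thm2.9}, so what remains is essentially structural bookkeeping.

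First I would describe the forward direction, from a harmonic bundle to each of the three objects. A harmonic bundle is specified by the canonical operators $\partial_h,\bar\partial_h,\theta_h,\theta_h^\dagger$ together with the metric $h$, and from these one forms, for every $\lambda\in\mathbb{C}$, the $C^\infty$ $\lambda$-connection
\begin{gather*}
\mathbb{D}^\lambda=\lambda\partial_h+\theta_h+\bar\partial_h+\lambda\theta_h^\dagger.
\end{gather*}
The pluri-harmonic equations for the harmonic bundle are exactly what is needed to verify $\mathbb{D}^\lambda\circ\mathbb{D}^\lambda=0$ for all $\lambda$ simultaneously, and one checks directly that $h$ remains a pluri-harmonic metric for each $\mathbb{D}^\lambda$. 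Thus a single harmonic bundle produces, at $\lambda=0$, a Higgs bundle $(E,\bar\partial_h,\theta_h)$; at $\lambda=1$, a flat bundle $(E,\mathbb{D}^1)$; and at general $\lambda$, a $\lambda$-flat bundle. Applying the three parts of Theorem~\ref{thm2.9} in the ``admits a pluri-harmonic metric $\Rightarrow$ stable'' direction, each of these is polystable with vanishing Chern classes (respectively semisimple, in the flat case).

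Next I would treat the reverse direction using the existence half of Theorem~\ref{thm2.9}: a polystable $\lambda$-flat bundle, a polystable Higgs bundle, or a semisimple flat bundle each admits a pluri-harmonic metric $h$, which by the canonical decomposition recorded in the excerpt equips it with the full harmonic-bundle data. Combined with the forward direction, this gives maps in both directions between each of the three categories and the category of harmonic bundles. To obtain the desired correspondence between $\lambda_1$- and $\lambda_2$-flat bundles one simply composes: attach the harmonic metric to a $\lambda_1$-flat bundle, read off its harmonic data, and re-extract the $\lambda_2$-flat structure.

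The remaining and only genuinely delicate point is to check that this is a well-defined bijection on equivalence classes rather than on individual objects, and here the uniqueness clause of Theorem~\ref{thm2.9} does the work. Because the pluri-harmonic metric is unique up to a scalar, an isomorphism between two of these bundles carries one pluri-harmonic metric to another and hence, after rescaling, is an isometry intertwining the canonical operators; it therefore descends to an isomorphism of the associated harmonic bundles and of every object extracted from them, so the construction respects equivalence classes on each side and the two passages are mutually inverse. The polystable but non-stable case is handled by decomposing into a direct sum of stable summands of equal slope, applying the correspondence summand by summand, and reassembling, with uniqueness again guaranteeing compatibility with the decomposition. I expect this bookkeeping with equivalence classes, together with pinning down the correct notion of equivalence on the flat side (conjugacy of the monodromy representation), to be the main obstacle, precisely because all of the hard analysis has been absorbed into Theorem~\ref{thm2.9}.
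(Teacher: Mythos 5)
Your proposal is correct and takes essentially the same route as the paper: the corollary is derived from Theorem~\ref{thm2.9} by passing through harmonic bundles, and the explicit maps the paper records in the subsequent Remark, namely $\big(E,\mathbb{D}^\lambda,h\big)\mapsto\big(E,\bar{\partial}_h,\theta_h,h\big)$ and $\big(E,\bar{\partial}_E,\theta,h\big)\mapsto\big(E,\bar{\partial}_E+\lambda\theta_h^\dagger,\lambda\partial_{E,h}+\theta,h\big)$, are exactly the constructions you describe. The bookkeeping on equivalence classes via uniqueness of the pluri-harmonic metric up to scalars, which you spell out, is left implicit in the paper but is the intended argument.
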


\begin{Remark}\quad
\begin{enumerate}\itemsep=0pt
\item[(1)] The correspondence between harmonic $\lambda$-flat bundles and harmonic Higgs bundles is given as follows:
\begin{gather*}\begin{split}&
\big(E,\mathbb{D}^\lambda,h\big)\longmapsto\big(E,\bar{\partial}_h,\theta_h,h\big),\\
&\big(E,\bar{\partial}_E,\theta,h\big)\longmapsto\big(E,\bar{\partial}_E+\lambda\theta_h^\dagger, \lambda\partial_{E,h}+\theta,h\big),
\end{split}
\end{gather*}
and the correspondence between harmonic $\lambda$-bundles and harmonic flat bundles is given as follows:
\begin{gather*}
\big(E,\mathbb{D}^\lambda,h\big) \longmapsto\big(E,\partial_h+\bar{\partial}_h+\theta_h+\theta_h^\dagger,h\big),\\
(E,\nabla,h) \longmapsto\big(E,\bar{\partial}_{E,h}+\lambda\theta_{E,h}^\dagger,\lambda\partial_{E,h}+\theta_{E,h},h\big).
\end{gather*}

\item[(2)] A natural direction of the study of non-Abelian Hodge correspondence is the generalization to non-compact case. In regular case (tame case), Simpson established the correspondence between tame harmonic bundles and parabolic Higgs bundles for open curves~\cite{CS}. The higher-dimensional generalization is obtained by Biquard \cite{OB} for smooth divisor case and by Mochizuki \cite{TM,TM1} for general case. In irregular case (wild case), Biquard and Boalch built the correspondence for curves \cite{BB}. Later Mochizuki generalized it to higher-dimensional case \cite{TM6}. Recently, the authors in \cite{GKPT} built the correspondence to the context of projective varieties with Kawamata log terminal (brief as klt) singularities.

\item[(3)] Another natural generalization is considering the corresponding case for real Lie groups. In \cite{BGM,GGM}, the authors considered the principal $G$-Higgs bundles for real Lie group $G$. They studied the Kobayashi--Hitchin correspondence for principal $G$-Higgs bundles, and therefore, they built the non-Abelian Hodge correspondence for such objects.

\item[(4)] The non-Abelian Hodge theory for varieties over a field of characteristic $p$ was built by Ogus and Vologodsky in~\cite{OV}. The $p$-adic version was suggested by Faltings in~\cite{GF} and finished by Abbes, Gros and Tsuji in~\cite{AGT}.
\end{enumerate}
\end{Remark}

Denote by $M_{\rm dR}(X,r)$ the coarse moduli space of completely reducible flat bundles of rank $r$, and by $M_{\rm Dol}(X,r)$ the coarse moduli space of semistable Higgs bundles of rank $r$ with vanishing Chern classes. Then by the non-Abelian Hodge correspondence, we can obtain a map between moduli spaces:
\begin{align*}
{\rm NAH}\colon \ M_{\rm Dol}(X,r) &\longrightarrow M_{\rm dR}(X,r),\\
\big[E,\bar{\partial}_E,\theta\big] &\longmapsto \big[E,\bar{\partial}_E+\theta^{\dagger_h},\partial_{E,h}+\theta\big],
\end{align*}
where $h$ is the unique pluri-harmonic metric of $\big[E,\bar{\partial}_E,\theta\big]$. We call such map the {\rm non-Abelian Hodge map}, it will be used in the next two sections.

\subsection{Periodic monopoles and difference modules}

The Kobayashi--Hitchin correspondence, which relates the stability of bundles (i.e., Higgs bundles, flat bundles, $\lambda$-flat bundles) and the existence of certain special metrics (i.e., Hermitian--Einstein metrics, pluri-harmonic metrics), plays an important role on non-Abelian Hodge theory. Theorem \ref{thm2.9} is obtained under various work of Kobayashi--Hitchin correspondence.

Recently, Mochizuki built the Kobayashi--Hitchin type correspondence between periodic mo\-no\-po\-les and difference modules~\cite{TM3,TM4,TM5}, as the non-Abelian Hodge theory for monopoles with periodicity. Here we give a brief introduction following these papers.

Let $\Gamma\subset\mathbb{R}^3$ be a lattice, with the quotient space $\mathcal{M}:=\mathbb{R}^3/\Gamma$, which is equipped with a~metric~$g_\mathcal{M}$ induced from the Euclidean metric of $\mathbb{R}^3$. Let $Z\subset\mathcal{M}$ be a finite subset, let~$(E,h)$ be a Hermitian vector bundle over $\mathcal{M}\backslash Z$ with a unitary connection $\nabla$ and an anti-self-adjoint endomorphism $\phi$, which is called a Higgs filed.

\begin{Definition}The quadruple $(E,h,\nabla,\phi)$ is called a {\em monopole} on $\mathcal{M}\backslash Z$ if it satisfies the Bogomolny equation:
\begin{align*}
F(\nabla)-*\nabla\phi=0,
\end{align*}
where $F(\nabla)=\nabla\comp\nabla$ is the curvature of $\nabla$ and $*$ is the Hodge star operator with respect to $g_\mathcal{M}$. Moreover, it is called a {\em periodic monopole} (resp. {\em doubly periodic monopole}, resp. {\em triply periodic monopole}) if $\Gamma\cong\mathbb{Z}$ (resp. $\Gamma\cong\mathbb{Z}^2$, resp. $\Gamma\cong\mathbb{Z}^3$).
\end{Definition}

\subsubsection{Periodic monopoles of GCK type and parabolic difference modules}

In \cite{TM3}, Mochizuki studied and built the Kobayashi--Hitchin type correspondence between irreducible periodic monopoles with GCK type singularity and stable parabolic difference modules of degree 0. In this case, $\mathcal{M}\backslash Z=\big(T^1\times\mathbb{R}^2\big)\backslash Z=\big(T^1\times\mathbb{C}\big)\backslash Z$.

\begin{Definition}A periodic monopole $(E,h,\nabla,\phi)$ on $\mathcal{M}\backslash Z$ is called of {\em GCK $($generalized Cherkis--Kapustin$)$ type} if it satisfies the following two conditions:
\begin{enumerate}\itemsep=0pt
\item[(1)] each point of $P\in Z$ is of Dirac type singularity of the monopole, this is equivalent to say, $|{\phi_|}_Q|_h=O\big(d(P,Q)^{-1}\big)$ for any $Q$ close to $P$~\cite{MY};
\item[(2)] $|\phi|_h=O(\log|z|)$ and $|F(\nabla)|\to0$ as $|z|\to\infty$, where $z$ is the coordinate of $\mathbb{C}$.
\end{enumerate}
\end{Definition}

Let $\lambda\in\mathbb{C}$, called a {\em twistor parameter}, let $\Phi^*\in\operatorname{Aut}(\mathbb{C}[y])$ determined by $\Phi^*(f(y))=f(y+\lambda)$.

\begin{Definition}
A {\em difference module on $\mathbb{C}$} (or called {\em $\lambda$-difference module}) is a $\mathbb{C}[y]$-module $\mathbf{V}$ with a complex linear automorphism $\Phi^*$ such that $\Phi^*(fs)=\Phi^*(f)\cdot\Phi^*(s)$ for any $f\in\mathbb{C}[y]$ and $s\in \mathbf{V}$. It is {\em torsion-free} if it is torsion-free as a $\mathbb{C}[y]$-module. It is {\em of finite type} if it is finitely generated over the algebra $\mathscr{A}_\lambda=\bigoplus_{n\in\mathbb{Z}}\mathbb{C}[y](\Phi^*)^n$ and $\dim_{\mathbb{C}[y]}\mathbf{V}\otimes\mathbb{C}(y)<\infty$, where $\mathscr{A}_\lambda$ has the product determined by $\Phi^*\cdot f(y)=f(y+\lambda)\cdot\Phi^*$.
\end{Definition}

Now let $\mathbf{V}$ be a torsion-free $\lambda$-difference module of finite type.

\begin{Definition}
A {\em parabolic structure at finite place} of $\mathbf{V}$ consists of the followings:
\begin{enumerate}\itemsep=0pt
\item[(1)] A free sub $\mathbb{C}[y]$-module $V\subset\mathbf{V}$ such that $\mathscr{A}_\lambda\cdot V=\textbf{V}$ and $V\otimes_{\mathbb{C}[y]}\mathbb{C}(y)=\mathbf{V}\otimes_{\mathbb{C}[y]}\mathbb{C}(y)$;
\item[(2)] A function $m\colon \mathbb{C}\to\mathbb{Z}_{\geq0}$ with $\sum\limits_{x\in\mathbb{C}}m(x)<\infty$, we assume $V\otimes_{\mathbb{C}[y]}\mathbb{C}[y]_D=(\Phi^*)^{-1}(V)\otimes_{\mathbb{C}[y]}\mathbb{C}[y]_D$ for $D:=\{x\in\mathbb{C}\,|\, m(x)>0\}$, where $\mathbb{C}[y]_D$ denotes the localization of $\mathbb{C}[y]$ with respect to $y-x$ ($x\in D$);
\item[(3)] For each $x\in\mathbb{C}$, there is a sequence of real numbers $\mathbb{\alpha}_x:=\{0\leq\alpha_{x,1}<\cdots<\alpha_{x,m(x)}<1\}$, when $m(x)=0$, it is assumed to be empty;
\item[(4)] For each $x\in\mathbb{C}$, there is a sequence of lattices $\mathbf{L}_x:=\big\{L_{x,0}=V\otimes_{\mathbb{C}[y]}\mathbb{C}[\![y-x]\!],L_{x,i}\subset V\otimes_{\mathbb{C}[y]}\mathbb{C}(\!(y-x)\!) (1\leq i\leq m(x)-1), L_{x,m(x)}=(\Phi^*)^{-1}(V)\otimes_{\mathbb{C}[y]}\mathbb{C}[\![y-x]\!]\big\}$.
\end{enumerate}
We use the quadruple $(V,m,\{\alpha_x,\mathbf{L}_x\}_{x\in\mathbb{C}})$ to denote it.
\end{Definition}

Let $V$ be a torsion-free $\lambda$-difference module of finite type and let $\widehat{\mathbf{V}}:=\mathbf{V}\otimes_{\mathbb{C}[y]}\mathbb{C}\big(\!\big(y^{-1}\big)\!\big)$ be its formal completion at $\infty$.

\begin{Definition}
A {\em parabolic structure at $\infty$} of $\mathbf{V}$ is a filtered bundle $\mathscr{F}_*\widehat{\mathbf{V}}=\big(\mathscr{F}_a\widehat{\mathbf{V}}\,|\, a\in\mathbb{R}\big)$ over $\widehat{\mathbf{V}}$ such that:
\begin{enumerate}\itemsep=0pt
\item[(1)] For each $a\in\mathbb{R}$, $\mathscr{F}_a\widehat{\mathbf{V}}$ is a sub $\mathbb{C}\big[\!\big[y^{-1}\big]\!\big]$-module of $\widehat{\mathbf{V}}$ with $\mathscr{F}_a\widehat{\mathbf{V}}\otimes_{\mathbb{C}[y]}\mathbb{C}\big(\!\big(y^{-1}\big)\!\big) =\widehat{\mathbf{V}}$;
\item[(2)] $\mathscr{F}_a\widehat{\mathbf{V}}\subset\mathscr{F}_b\widehat{\mathbf{V}}$ if $a\leq b$;
\item[(3)] $\mathscr{F}_{a+n}\widehat{\mathbf{V}}=y^n\mathscr{F}_a\widehat{\mathbf{V}}$ for any $a\in\mathbb{R}$ and $n\in\mathbb{Z}$;
\item[(4)] For any $a\in\mathbb{R}$, there exists $\varepsilon>0$ such that $\mathscr{F}_a\widehat{\mathbf{V}}=\mathscr{F}_{a+\varepsilon}\widehat{\mathbf{V}}$.
\end{enumerate}

$\mathscr{F}_*\widehat{\mathbf{V}}$ is called {\em good} if we have a decomposition
\begin{gather*}
\mathscr{F}_*\big(\widehat{\mathbf{V}}\otimes\mathbb{C}\big(\!\big(y^{-1/p}\big)\!\big)\big)=\bigoplus_{c\in p^{-1}\mathbb{Z}}\bigoplus_{d\in\mathbb{C}^*}\bigoplus_{e\in S(p)}\mathscr{F}_*\widehat{\mathbf{V}}_{c,d,e}
\end{gather*}
such that $\big(c^{-1}y^{-c}\Phi^*-(1+e)\text{Id}\big)\mathscr{F}_a\widehat{\mathbf{V}}_{c,d,e}\subset y^{-1}\mathscr{F}_a\widehat{\mathbf{V}}_{c,d,e}$ for any $a\in\mathbb{R}$, here $p\in\mathbb{Z}_{>0}$ and $S(p):=\big\{\sum\limits_{i=1}^{p-1}e_iy^{-i/p}\,|\, e_i\in\mathbb{C}\big\}$. A {\em parabolic $\lambda$-difference module} is a torsion-free $\lambda$-difference module of finite type with a parabolic structure at finite place and a good parabolic structure at $\infty$. We denote it as the following:
\begin{gather*}
\big(\mathbf{V},(V,m,\{\alpha_x,\mathbf{L}_x\}_{x\in\mathbb{C}}),\mathscr{F}_*\widehat{\mathbf{V}}\big).
\end{gather*}
For simplicity, we will just use $(\mathbf{V},\bullet)$ to denote such an object.
\end{Definition}

\begin{Definition}
The {\em degree} of a parabolic $\lambda$-difference module $(\mathbf{V},\bullet)$ is given by
\begin{gather*}
\deg(\mathbf{V},\bullet):= \deg(\mathscr{F}_0\mathcal{V})-\sum_{-1<a\leq0}a\dim_\mathbb{C}\mathrm{Gr}_a^\mathscr{F} \big(\widehat{\mathbf{V}}\big)-\sum_{c\in\mathbb{Q}}\frac{c}{2}r(c)\\
\hphantom{\deg(\mathbf{V},\bullet):= }{} +\sum_{x\in\mathbb{C}}\sum_i(1-\alpha_{x,i})\deg(L_{x,i},L_{x,i-1}),
\end{gather*}
where $\mathscr{F}_0\mathcal{V}$ is the $\mathcal{O}_{\mathbb{P}^1}$-modules associated to the free $\mathbb{C}[y]$-module $V$, $\mathrm{Gr}_a^\mathscr{F}\big(\widehat{\mathbf{V}}\big):= \mathscr{F}_a\widehat{\mathbf{V}}/\mathscr{F}_{<a}\widehat{\mathbf{V}}$, $r(c):=\sum\limits_{d\in\mathbb{C}^*}\sum\limits_{e\in S(p)}\dim_{\mathbb{C}(\!(y^{-1/p})\!)}\widehat{\mathbf{V}}_{c,d,e}$ and if $m(x)>0$, then for $1\leq i\leq m(x)$,
\begin{gather*} \deg(L_{x,i},L_{x,i}):=\dim_\mathbb{C}(L_{x,i}/(L_{x,i}\cap L_{x,i-1}))-\dim_\mathbb{C}(L_{x,i-1}/(L_{x,i}\cap L_{x,i-1})).\end{gather*}
The {\em slope} is given by
\begin{gather*}
\mu(\mathbf{V},\bullet):=\deg(\mathbf{V},\bullet)/\operatorname{rk}_{\mathbb{C}[y]}(V).
\end{gather*}
\end{Definition}

For any $\mathbb{C}(y)$-subspace $\widetilde{\mathbf{V}}'\subset\widetilde{\mathbf{V}}:=\mathbf{V}\otimes\mathbb{C}(y)$ that is $\Phi^*$-invariant, i.e., $\phi^*\big(\widetilde{\mathbf{V}}'\big)\subset\widetilde{\mathbf{V}}'$, set $V'=\widetilde{\mathbf{V}}'\cap V$ and $\mathbf{V}'=\mathscr{A}\cdot V'$, then there is an induced structure on~$\mathbf{V}'$:
\begin{gather*}
L_{x,i}':=L_{x,i}\cap V',\qquad \widehat{\mathbf{V}}':=\mathbf{V}'\otimes_{\mathbb{C}[y]}\mathbb{C}\big(\!\big(y^{-1}\big)\!\big),\qquad \mathscr{F}_a\widehat{\mathbf{V}}':=\mathscr{F}_a\widehat{\mathbb{V}}\cap\widehat{\mathbf{V}}'.
\end{gather*}
This structure is a parabolic structure at finite place and a good parabolic structure at $\infty$, we denote the sub parabolic $\lambda$-difference module as $\big(\mathbf{V}',m,\{\alpha_x, \mathbf{L}_x'\}_{x\in\mathbb{C}},\mathscr{F}_*\widehat{\mathbf{V}}'\big)$, for simplicity, we denote it as $(\mathbf{V}',\bullet)$.

\begin{Definition}
A parabolic $\lambda$-difference module $\big(\mathbf{V},(V,m,\{\alpha_x,\mathbf{L}_x\}_{x\in\mathbb{C}}),\mathscr{F}_*\widehat{\mathbf{V}}\big)$ is called {\em stable} (resp.\ {\em semistable}) if for any non-zero proper $\mathbb{C}(y)$-subspace $\widetilde{\mathbf{V}}'$ that is $\Phi^*$-invariant, we have
$\mu(\mathbf{V}',\bullet)< (\text{resp.\ $\leq$}) \ \mu(\mathbf{V},\bullet)$.
\end{Definition}

In \cite{TM3}, Mochizuki built the following Kobayashi--Hitchin type correspondence between periodic monopoles and $\lambda$-difference modules, as the analogue of the correspondence between harmonic bundles and (poly-)stable $\lambda$-flat bundles:

\begin{Theorem}[\cite{TM3}]For each $\lambda\in\mathbb{C}$, there is an one to one correspondence between irreducible periodic monopoles of $GCK$ type and stable parabolic $\lambda$-difference modules of degree~$0$.
\end{Theorem}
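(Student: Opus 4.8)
The plan is to follow the now-standard architecture of a Kobayashi--Hitchin correspondence, exactly as in the $\lambda$-flat case of Theorem~\ref{thm2.9}(3): construct a functor in each direction and show they are mutually quasi-inverse, with the monopole playing the role of the harmonic object that carries the special metric, and the parabolic $\lambda$-difference module playing the role of the stable algebraic object. The twistor parameter $\lambda$ appearing in the shift $\Phi^*(f(y))=f(y+\lambda)$ is the analogue of the parameter $\lambda$ of a $\lambda$-connection, so I expect the two directions to mirror the ``easy'' half (object $\to$ stable data) and the ``hard'' half (existence of the metric) of the non-Abelian Hodge correspondence.

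\emph{From monopoles to difference modules.} Starting from an irreducible periodic monopole $(E,h,\nabla,\phi)$ of GCK type on $(T^1\times\mathbb{C})\setminus Z$, I would first produce the algebraic data by a spectral/scattering construction adapted to the fixed $\lambda$. Combining the covariant derivative of $\nabla$ along the circle $T^1$ with the Higgs field $\phi$ yields a mini-holomorphic structure on $T^1\times\mathbb{C}$; the spectral coordinate of $\phi$ provides the module structure over $\mathbb{C}[y]$ (multiplication by $y$), while the holonomy of translation around $T^1$, complexified by $\lambda$, supplies the automorphism $\Phi^*$ shifting $y$ by $\lambda$. Pushing forward the resulting sheaf of flat sections along $T^1$ gives a torsion-free $\lambda$-difference module $\mathbf{V}$ of finite type. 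The two analytic hypotheses of GCK type then feed directly into the two parabolic structures: the Dirac-type singularities at $Z$, where $|\phi|_h=O(d(P,Q)^{-1})$, produce the lattices $\mathbf{L}_x$ and the weights $\alpha_{x,i}$ at finite places, while the asymptotic conditions $|\phi|_h=O(\log|z|)$ and $|F(\nabla)|\to 0$ control the Stokes/irregular behaviour and furnish the \emph{good} filtered structure $\mathscr{F}_*\widehat{\mathbf{V}}$ at $\infty$, whose decomposition data $(c,d,e)$ record the leading terms of $\phi$. Finally I would verify that this object has degree $0$ by a Chern--Weil computation, in which the residue and boundary contributions of the degree formula match the curvature integral of the monopole, which vanishes by the finite-energy decay $|F(\nabla)|\to 0$; and I would show that irreducibility forces stability, since a destabilizing $\Phi^*$-invariant subspace $\widetilde{\mathbf{V}}'$ would integrate back to a proper sub-monopole, contradicting irreducibility.

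\emph{From difference modules to monopoles.} This is the substantive half and, I expect, the main obstacle. Given a stable parabolic $\lambda$-difference module of degree $0$, one must construct a Hermitian metric $h$ whose associated pair $(\nabla,\phi)$ solves the Bogomolny equation $F(\nabla)-*\nabla\phi=0$ with the prescribed singularities along $Z$ and the prescribed asymptotics at $\infty$. The route is the continuity/perturbation method: introduce a family of approximating equations (Mochizuki's $\varepsilon$-perturbation, as used in the higher-dimensional parabolic Higgs setting) on an exhaustion of $\mathbb{C}$ by large discs, solve each approximate problem, and pass to a limit. Stability would enter exactly as in the compact case, to prevent the conformal factor of the metric from degenerating: a failure of the uniform $C^0$ estimate would, via an Uhlenbeck--Yau type argument, produce a coherent $\Phi^*$-invariant subobject violating the slope inequality.

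\emph{Where the difficulty lies.} The hard part is entirely analytic and concentrated at the two kinds of ends. Near each Dirac singularity in $Z$, and more seriously at the irregular end $|z|\to\infty$, the logarithmic growth of $\phi$ and the Stokes structure make the a~priori estimates delicate: one must establish estimates that survive the limiting process and match the solution's asymptotics precisely to the prescribed parabolic and filtered data, so that the two functors are genuinely inverse to each other. I would handle this by transplanting the asymptotic analysis of tame and wild harmonic bundles to the mini-holomorphic monopole setting. Finally, uniqueness of the monopole attached to a given stable module (up to the usual scalar), needed to make the inverse functor well defined, should follow from a maximum-principle and Weitzenb\"ock argument applied to the difference of two solutions, exactly as the pluri-harmonic metric is unique up to scalars in Theorem~\ref{thm2.9}.
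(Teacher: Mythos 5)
The paper contains no proof of this statement for you to be measured against: it is a survey, and the theorem is quoted directly from Mochizuki~\cite{TM3}, with only the surrounding definitions (GCK type, parabolic $\lambda$-difference modules, degree, stability) reproduced. Judged against the strategy of~\cite{TM3} as the survey frames it, your skeleton is the right one: a functor from monopoles to filtered difference modules, stability extracted from irreducibility, degree~$0$ from a Chern--Weil-type computation, and a Kobayashi--Hitchin existence theorem in which stability blocks degeneration of the metric, with uniqueness up to scalar from a maximum-principle argument.

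Two genuine gaps are worth naming, beyond the fact that every load-bearing step is deferred with ``I would verify/handle.'' First, the role of $\lambda$ is misplaced. In Mochizuki's construction $\lambda$ does not enter through a ``complexified holonomy'' of translation: it labels a family of mini-complex structures on $\mathbb{R}\times\mathbb{C}$ (a twistor-type deformation), the $\mathbb{C}[y]$-module structure comes from the $\lambda$-dependent holomorphic coordinate $y$ of the base (not from ``the spectral coordinate of $\phi$''), and $\Phi^*$ is induced by the generator of $\Gamma\cong\mathbb{Z}$ expressed in that coordinate, which is why it shifts $y$ by $\lambda$. In particular one and the same monopole yields the whole family of $\lambda$-difference modules as $\lambda$ varies --- the exact analogue of a single harmonic bundle producing $\lambda$-flat bundles for every $\lambda$ in Theorem~\ref{thm2.9} --- whereas your construction hard-wires a fixed $\lambda$ into the holonomy and obscures this. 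Second, the analytic half cannot be a literal transplant of the $\varepsilon$-perturbation/Uhlenbeck--Yau scheme you invoke: the Bogomolny equation lives on a $3$-manifold, where there is no Donaldson functional or Hermitian--Einstein formulation on the nose; one must pass through a dimensional-reduction dictionary (mini-holomorphic bundles, or $S^1$-invariant data on a four-dimensional lift) before any such machinery applies. The technical core of~\cite{TM3} --- showing that the metric asymptotics at the Dirac points and at $|z|\to\infty$ induce precisely the lattices $\mathbf{L}_x$, the weights $\alpha_{x,i}$, and the \emph{good} filtered structure with its $(c,d,e)$-decomposition, and conversely that these data can be prescribed --- is exactly what your sketch treats as a routine matching, and it is not. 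So: correct architecture, faithful in spirit to the cited proof, but as written a program rather than a proof.
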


\subsubsection[Meromorphic doubly periodic monopoles and parabolic $q$-difference modules]{Meromorphic doubly periodic monopoles and parabolic $\boldsymbol{q}$-difference modules}

As we have defined, when $\Gamma\cong\mathbb{Z}^2$, $\mathcal{M}=\mathbb{R}^3/\Gamma=T^2\times\mathbb{R}$ and $Z\subset\mathcal{M}$ a finite subset, a monopole $(E,h,\nabla,\phi)$ on $\mathcal{M}\backslash Z$ is called a doubly periodic monopole. In~\cite{TM4}, Mochizuki studied and built the Kobayashi--Hitchin type correspondence between irreducible meromorphic doubly periodic monopoles and stable $q$-difference modules of degree~0.

\begin{Definition}A doubly periodic monopole $(E,h,\nabla,\phi)$ on $\mathcal{M}\backslash Z$ is called {\em meromorphic} if each point of $Z$ is of Dirac type singularity and if the curvature~$F(\nabla)$ is bounded with respect to~$h$ and $g_\mathcal{M}$ when $|t|\to\infty$, where~$t$ is the coordinate for~$\mathbb{R}$.
\end{Definition}

Let $q\in\mathbb{C}^*$, called a {\em twistor parameter}, let $\Phi\in\operatorname{Aut} \big(\mathbb{C}\big[y,y^{-1}\big]\big)$ determined by $\Phi(f(y))=f(qy)$.

\begin{Definition}
A {\em difference module on $\mathbb{C}^*$} (or called {\em $q$-difference module}) is a $\mathbb{C}\big[y,y^{-1}\big]$-module~$\mathbf{V}$ with a complex linear automorphism~$\Phi^*$ such that $\Phi^*(fs)=\Phi^*(f)\cdot\Phi^*(s)$ for any $f\in\mathbb{C}\big[y,y^{-1}\big]$ and $s\in \mathbf{V}$. It is {\em torsion-free} if it is torsion-free as a $\mathbb{C}\big[y,y^{-1}\big]$-module.
\end{Definition}

Let $\mathscr{A}_q:=\bigoplus_{n\in\mathbb{Z}}\mathbb{C}\big[y,y^{-1}\big](\Phi^*)^n$ be an algebra determined by $\Phi^*\cdot f(y)=f(qy)\cdot\Phi^*$, and let $\mathbf{V}$ be a torsion-free $q$-difference module.

\begin{Definition}A {\em parabolic structure at finite place} of~$\mathbf{V}$ consists of the followings:
\begin{enumerate}\itemsep=0pt
\item[(1)] A free sub $\mathbb{C}\big[y,y^{-1}\big]$-module $V\subset\mathbf{V}$ such that $\mathscr{A}_q\cdot V=\textbf{V}$ and $V\otimes_{\mathbb{C}[y,y^{-1}]}\mathbb{C}(y)=\mathbf{V}\otimes_{\mathbb{C}[y,y^{-1}]}\mathbb{C}(y)$;
\item[(2)] A finite subset $D\subset\mathbb{C}^*$ such that $V(*D)=(\Phi^*)^{-1}(V)(*D)$, where $V(*D):=V\otimes_{\mathbb{C}[y,y^{-1}]}\mathbb{C}[y,y^{-1}](*D)$;
\item[(3)] For each $x\in D$, there is a sequence of real numbers $\mathbb{\alpha}_x:=\{0\leq\alpha_{x,1}<\cdots<\alpha_{x,m(x)}<1\}$, when $m(x)=0$, it is assumed to be empty;
\item[(4)] For each $x\in D$, there is a sequence of lattices $\mathbf{L}_x:=\big\{L_{x,0}=V\otimes_{\mathbb{C}[y,y^{-1}]}\mathbb{C}[\![y-x]\!],L_{x,i}\subset V\otimes_{\mathbb{C}[y,y^{-1}]}\mathbb{C}(\!(y-x)\!) (1\leq i\leq m(x)-1), L_{x,m(x)}=(\Phi^*)^{-1}(V)\otimes_{\mathbb{C}[y,y^{-1}]}\mathbb{C}[\![y-x]\!]\big\}$.
\end{enumerate}
We denote it as the quadruple $(V,D,\{\alpha_x,\mathbf{L}_x\}_{x\in D})$.
\end{Definition}

\begin{Definition}
A {\em good parabolic structure at $\infty$} of $\mathbf{V}$ is the pair $\big(\mathscr{F}_*{\mathbf{V}_|}_{\widehat{0}},\mathscr{F}_*{\mathbf{V}_|}_{\widehat{\infty}}\big)$, where $\mathscr{F}_*{\mathbf{V}_|}_{\widehat{0}}$ is a good filtered bundle over ${\mathbf{V}_|}_{\widehat{0}}:=\mathbf{V}\otimes\mathbb{C}(\!(y)\!)$ and $\mathscr{F}_*{\mathbf{V}_|}_{\widehat{\infty}}$ is a good filtered bundle over ${\mathbf{V}_|}_{\widehat{\infty}}:=\mathbf{V}\otimes\mathbb{C}\big(\!\big(y^{-1}\big)\!\big)$. A {\em parabolic $q$-difference module} is a $q$-difference module $\mathbf{V}$ with a parabolic structure at finite place and a good parabolic structure at $\infty$, we denote it as $\big(\mathbf{V},(V,D\{\alpha_x,\mathbf{L}_x\}_{x\in D}),\mathscr{F}_*{\mathbf{V}_|}_{\widehat{0}},\mathscr{F}_*{\mathbf{V}_|}_{\widehat{\infty}}\big)$, for simplicity, we denote it as~$(\mathbf{V},\bullet)$.

The {\em degree} and {\em slope} of a parabolic $q$-difference module $(\mathbf{V},\bullet)$ are given by
\begin{gather*}
\deg(\mathbf{V},\bullet):= \deg(\mathscr{F}_0\mathcal{V})+\sum_{x\in D}\sum_{i=1}^{m(x)}(1-\alpha_{x,i})\deg(L_{x,i},L_{x,i-1})\\
\hphantom{\deg(\mathbf{V},\bullet):=}{} -\sum_{c\in\mathbb{Q}}\frac{c}{2}\big(\big(\dim_{\mathbb{C}(\!(y)\!)}({\mathbb{V}_|}_{\widehat{0}})_c\big) +\dim_{\mathbb{C}(\!(y^{-1})\!)}(({\mathbf{V}_|}_{\widehat{\infty}})_c)\big),
\end{gather*}
and $\mu(\mathbf{V},\bullet):=\frac{\deg(\mathbf{V},\bullet)}{\operatorname{rk}_{\mathbb{C}[y,y^{-1}]}(V)}$, respectively,
where each $\big({\mathbb{V}_|}_{\widehat{0}}\big)_c$ is a term associated to the slope decomposition ${\mathbf{V}_|}_{\widehat{0}}=\bigoplus\limits_{c\in\mathbb{Q}}\big({\mathbb{V}_|}_{\widehat{0}}\big)_c$ that is $\Phi^*$-invariant and for each $c=l/k\in\mathbb{Q}$, there exists $\mathbb{C}[\![y]\!]$-lattice $L_c\subset({\mathbb{V}_|}_{\widehat{0}})_c$ with $y^l(\Phi^*)^kL_c=L_c$. It is called {\em stable} (resp. {\em semistable}) if for any non-zero proper $q$-difference $\mathbb{C}(y)$-subspace $\widetilde{\mathbf{V}}'$ of $\widetilde{\mathbf{V}}:=\mathbf{V}\otimes\mathbb{C}(y)$ with the naturally induced parabolic structure, we have $\mu(\mathbf{V}',\bullet)< (\text{resp. $\leq$}) \ \mu(\mathbf{V},\bullet)$.
\end{Definition}

In \cite{TM4}, Mochizuki built the following Kobayashi--Hitchin type correspondence between doubly periodic monopoles and $q$-difference modules:

\begin{Theorem}[\cite{TM4}]
For each $q\in\mathbb{C}^*$, there is an one to one correspondence between irreducible meromorphic doubly periodic monopoles and stable parabolic $q$-difference modules of degree~$0$.
\end{Theorem}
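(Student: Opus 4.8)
The plan is to establish this as a Kobayashi--Hitchin type correspondence in the spirit of Theorem~\ref{thm2.9}, passing through a Bogomolny (pluri-harmonic) metric. Concretely, I would organize the argument into a \emph{complexification} functor sending a monopole to a $q$-difference module, an \emph{existence} theorem producing the inverse functor, and a \emph{matching} of the irreducibility and stability conditions.

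First I would fix a complex structure on $\mathcal{M}\backslash Z$. Writing $\mathcal{M}=T^2\times\mathbb{R}$, I would combine the coordinate $t$ of $\mathbb{R}$ with one of the two circle directions of $T^2$ to form a cylinder, i.e., an open subset of $\mathbb{C}^*$ with coordinate $y$, so that $|t|\to\infty$ corresponds to $y\to0$ and $y\to\infty$; the remaining circle direction then supplies a monodromy automorphism, and the twistor parameter $q\in\mathbb{C}^*$ records how that direction couples to the complex structure. Using the Bogomolny equation $F(\nabla)-*\nabla\phi=0$, the appropriate combination of $\nabla$ and $\phi$ transverse to the cylinder defines a $\bar\partial$-operator whose integrability is exactly the complexified Bogomolny equation; its sheaf of holomorphic sections, together with the monodromy operator $\Phi^*$ satisfying $\Phi^*(fs)=\Phi^*(f)\Phi^*(s)$ and the relation $\Phi^*\cdot f(y)=f(qy)\cdot\Phi^*$, yields a torsion-free $q$-difference module $\mathbf{V}$.

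Next I would read off the parabolic data from the singular and asymptotic behavior. Near each Dirac-type point of $Z$, the known local model of a Dirac monopole (cf.~\cite{MY}) prescribes the growth of sections in the metric $h$, and this growth filtration produces the lattices $\mathbf{L}_x$ and weights $\alpha_x$ of the parabolic structure at finite place. As $|t|\to\infty$, the meromorphic hypothesis (boundedness of $F(\nabla)$) forces the monopole data into a controlled asymptotic normal form; decomposing by the $\Phi^*$-slopes and measuring the metric growth at the two ends yields the good filtered bundles $\mathscr{F}_*{\mathbf{V}_|}_{\widehat{0}}$ and $\mathscr{F}_*{\mathbf{V}_|}_{\widehat{\infty}}$. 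A Chern--Weil computation, integrating the trace of the Bogomolny curvature and collecting the boundary contributions at $Z$ and at the two ends, should show that the assembled parabolic degree vanishes, giving $\deg(\mathbf{V},\bullet)=0$; and since any $\Phi^*$-invariant holomorphic sub-object of the monopole would be preserved by $(\nabla,\phi)$, irreducibility of the monopole translates into the strict slope inequality, i.e., stability.

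The converse, and the heart of the theorem, is the existence statement: from a stable parabolic $q$-difference module of degree~$0$ one must reconstruct the metric $h$ solving the Bogomolny equation with exactly the prescribed Dirac singularities and asymptotics. Here I would follow the Hitchin--Simpson--Mochizuki paradigm: build an initial smooth bundle with connection and Higgs field realizing the parabolic and filtered data approximately, then deform the Hermitian metric so as to solve the nonlinear elliptic system coming from Bogomolny, using a continuity or heat-flow method together with Mochizuki's $\varepsilon$-perturbation technique (as in the parabolic Higgs setting~\cite{TM}) to cope with the non-compactness. \textbf{The main obstacle is precisely this analytic existence step}: one must establish a priori estimates that simultaneously control the solution at the Dirac points and at the two infinite ends of $T^2\times\mathbb{R}$, and must show that the degree-$0$ stability condition is exactly what makes the relevant energy functional attain its minimum, equivalently, that any failure of convergence produces a destabilizing $\Phi^*$-invariant subspace. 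Uniqueness of $h$ up to scale, together with the verification that the reconstructed monopole is meromorphic and irreducible, would then close the equivalence and exhibit the two functors as mutually inverse.
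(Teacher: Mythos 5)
First, a point of order: this survey does not prove the theorem at all --- the statement is quoted from Mochizuki \cite{TM4} with a citation and no argument, so there is no ``paper's own proof'' to compare against; the only meaningful benchmark is Mochizuki's proof itself. Measured against that, your sketch reproduces the correct high-level architecture: the mini-complexification of the Bogomolny equation (one circle of $T^2$ serving as the scattering direction, the transverse cylinder $S^1\times\mathbb{R}\cong\mathbb{C}^*$ carrying the coordinate $y$, with the two ends $y\to 0,\infty$ corresponding to $|t|\to\infty$), the scattering automorphism $\Phi^*$ twisted by $q$, parabolic lattices at Dirac points via the local model of \cite{MY}, filtered structures at the two ends, a Chern--Weil degree computation, and a Kobayashi--Hitchin existence theorem in the style of Theorem~\ref{thm2.9} matching stability of degree $0$ with irreducibility. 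This is indeed the shape of \cite{TM3,TM4}.

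However, as a proof the proposal has a genuine gap, which you acknowledge but do not close: essentially all of the theorem's substance is deferred. The existence direction is not a routine transplant of the Hitchin--Simpson--Mochizuki continuity method --- the base $T^2\times\mathbb{R}$ is non-compact with two ends, so one must construct model metrics near the Dirac points and at both ends, prove a priori estimates there, and show that failure of convergence produces a destabilizing $\Phi^*$-invariant subspace; none of this is sketched beyond naming it. Moreover, several steps you treat as bookkeeping are themselves theorems in \cite{TM4}: that the scattering construction produces a module whose completions ${\mathbf{V}_|}_{\widehat{0}}$ and ${\mathbf{V}_|}_{\widehat{\infty}}$ carry \emph{good} filtered structures --- i.e., admit the slope decomposition $\bigoplus_{c\in\mathbb{Q}}\big({\mathbf{V}_|}_{\widehat{0}}\big)_c$ with lattices $L_c$ satisfying $y^l(\Phi^*)^kL_c=L_c$ --- requires a full asymptotic classification of meromorphic monopoles as $|t|\to\infty$, which is a large part of Mochizuki's paper and is logically prior to even \emph{defining} the degree; correspondingly, your Chern--Weil argument must reproduce exactly the correction terms $-\sum_{c}\frac{c}{2}\big(\dim(({\mathbf{V}_|}_{\widehat{0}})_c)+\dim(({\mathbf{V}_|}_{\widehat{\infty}})_c)\big)$ in $\deg(\mathbf{V},\bullet)$, which is not an obvious boundary-term calculation. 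Finally, the irreducibility/stability matching is subtler than ``invariant sub-objects are preserved'': one needs that a $\Phi^*$-invariant subspace $\widetilde{\mathbf{V}}'\subset\widetilde{\mathbf{V}}$ with its induced parabolic data corresponds to a sub-object of the monopole with controlled curvature and asymptotics, so that the metric slope inequality applies --- again resting on the asymptotic analysis. In short: the roadmap is right and consistent with Mochizuki's strategy, but the analytic core that constitutes the theorem is absent, and no route around it is proposed.
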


\subsubsection[Meromorphic triply periodic monopoles and parbolic difference modules on elliptic curves]{Meromorphic triply periodic monopoles and parbolic difference modules\\ on elliptic curves}

When $\Gamma\cong\mathbb{Z}^3$, $\mathcal{M}=\mathbb{R}^3/\Gamma=T^3$ and $Z\subset \mathcal{M}$ a finite subset, a monopole $(E,h,\nabla,\phi)$ on $\mathcal{M}\backslash Z$ is called a triply periodic monopole. In \cite{TM5}, Mochizuki studied and built the Kobayashi--Hitchin type correspondence between meromorphic triply periodic monopoles and stable difference modules on elliptic curves of degree 0.

\begin{Definition}A triply periodic monopole $(E,h,\nabla,\phi)$ on $\mathcal{M}\backslash Z$ is called {\em meromorphic} if each point of~$Z$ is of Dirac singularity.
\end{Definition}

Let $\Gamma_0\subset\mathbb{C}$ be a lattice and let $T:=\mathbb{C}/\Gamma_0$ be the elliptic curve, let $a\in T$, called a twistor parameter, let $\Phi\in\operatorname{Aut}(T)$ given by $\Phi(z)=z+a$, let $D\subset T$ be a finite subset.

\begin{Definition}A {\em parabolic difference module on the elliptic curve $T$} (or {\em parabolic $a$-difference module}) consists of the followings:
\begin{enumerate}\itemsep=0pt
\item[(1)] A locally free $\mathcal{O}_T$-module $V$ and an isomorphism of $\mathcal{O}_T$-modules $V(*D)\cong(\Phi^*)^{-1}(V)(*D)$;
\item[(2)] For each $x\in D$, there is a sequence of real numbers $\mathbb{\alpha}_x:=\{0\leq\alpha_{x,1}<\cdots<\alpha_{x,m(x)}<1\}$, when $m(x)=0$, it is assumed to be empty;
\item[(3)] For each $x\in D$, there is a sequence of lattices $\mathbf{L}_x:=\big\{L_{x,0}=V_x,L_{x,i}\subset V(*D)_x (1\leq i\leq m(x)-1), L_{x,m(x)}=(\Phi^*)^{-1}(V)_x\big\}$.
\end{enumerate}
We denote it as the quadruple $\big(V,D,\{\alpha_x,\mathbf{L}_x\}_{x\in D}\big)$, and for simplicity, we denote it as $(V,\bullet)$. The {\em degree} and {\em slope} of $(V,\bullet)$ are given by
\begin{align*}
\deg(V,\bullet):=\deg(V)+\sum_{x\in D}\sum_{i=1}^{m(x)}(1-\alpha_{x,i})\deg(L_{x,i},L_{x,i-1})
\end{align*}
and $\mu(V,\bullet):=\frac{\deg(V,\bullet)}{\operatorname{rk}(V)}$, respectively. It is called {\em stable} (resp. {\em semistable}) if for any non-zero proper submodule $V'\subset V$ such that $V'(*D)\cong(\Phi^*)^{-1}(V')(*D)$ with the naturally induced parabolic structure, we have $\mu(V',\bullet)< (\text{resp. $\leq$}) \ \mu(V,\bullet)$.
\end{Definition}

In \cite{TM5}, Mochizuki built the following Kobayashi--Hitchin type correspondence between triply periodic monopoles and $a$-difference modules:

\begin{Theorem}[\cite{TM5}]For each $a\in\mathbb{C}/\Gamma_0$, there is an one to one correspondence between irreducible meromorphic triply periodic monopoles and stable parabolic $a$-difference modules of degree~$0$.
\end{Theorem}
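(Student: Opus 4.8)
The plan is to establish the correspondence as a Kobayashi--Hitchin type equivalence in exact parallel with the scheme behind Theorem~\ref{thm2.9}, but with the Bogomolny equation $F(\nabla)-*\nabla\phi=0$ playing the role of the pluri-harmonic (Hermitian--Einstein) equation. There are two constructions to carry out, together with a verification that they are mutually inverse.

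\emph{From a monopole to a difference module.} Starting with an irreducible meromorphic triply periodic monopole $(E,h,\nabla,\phi)$ on $T^3\setminus Z$, I would first realize $T^3=\mathbb{R}^3/\Gamma$ as an $S^1$-fibration over the elliptic curve $T=\mathbb{C}/\Gamma_0$, with complex structure on the base determined by the twistor parameter $a$. Dimensional reduction of the Bogomolny equation along the fiber circle splits the data: the $(0,1)$-part of $\nabla$ in the base directions, combined with $\phi$, furnishes a holomorphic structure on the pushforward $V$ over $T\setminus D$, where $D\subset T$ is the image of $Z$; meanwhile the parallel transport around the $S^1$-fiber, combined with the spectral data of $\phi$, produces the shift isomorphism $V(*D)\cong(\Phi^*)^{-1}(V)(*D)$ for the translation $\Phi(z)=z+a$. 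This is precisely the holomorphic skeleton of an $a$-difference module.

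\emph{Reading off the parabolic structure, degree and stability.} Next I would analyze the local model at each Dirac singularity $P\in Z$. The Dirac asymptotics of $\phi$ and $h$ near $P$ pin down the behavior of the holomorphic frame and hence encode the sequence of lattices $\mathbf{L}_x$ and weights $\alpha_x$ at the corresponding $x\in D$, giving the parabolic structure at finite place. A Chern--Weil computation, integrating the Bogomolny identity over $T^3\setminus Z$ while tracking the Dirac contributions at the punctures, then shows $\deg(V,\bullet)=0$. Irreducibility transfers to stability: any $\Phi^*$-invariant destabilizing $\mathbb{C}(y)$-subspace would, through the correspondence just set up, yield a proper $\nabla$- and $\phi$-invariant subbundle, contradicting irreducibility, and the required slope inequality follows from the same Chern--Weil bookkeeping applied to the submodule.

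\emph{From a difference module back to a monopole, and the main obstacle.} This is the analytic heart. Given a stable parabolic $a$-difference module $(V,\bullet)$ of degree $0$, I would fix the underlying holomorphic data and seek a Hermitian metric $h$ whose associated connection and Higgs field solve the Bogomolny equation with the prescribed Dirac singularities at $Z$; phrasing this as a nonlinear elliptic system for $h$ after gauge fixing, I would solve it by a continuity method or the associated Hermitian--Einstein-type flow, using the degree-$0$ stability condition to produce the decisive a priori $C^0$-estimate via a Donaldson--Uhlenbeck--Yau / Simpson argument. Uniqueness of such $h$ up to scale then identifies the two constructions as mutually inverse, just as the pluri-harmonic metric is unique in Theorem~\ref{thm2.9}. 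The hard part is exactly this existence direction, and within it the local analysis at the Dirac singularities: because $T^3$ is compact there are no non-compact ends at infinity as in the periodic and doubly periodic cases, so the entire difficulty concentrates on matching the singular Dirac model at the punctures and controlling the metric there. One needs weighted Sobolev/H\"older estimates adapted to the Dirac asymptotics near each puncture, shown to be compatible with the global slope inequality so that the a priori estimate closes; establishing this interplay between the local singular behavior and the global stability condition is where the genuine work lies.
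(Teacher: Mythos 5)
Before anything else, note that the paper offers no proof of this statement: it is a survey item quoted verbatim from Mochizuki~\cite{TM5}, so your proposal can only be measured against Mochizuki's actual argument, not against anything in this text. At the level of strategy you have correctly guessed the Kobayashi--Hitchin template that Mochizuki follows (holomorphic data on the elliptic curve extracted from the monopole, parabolic data from the Dirac local model, a Chern--Weil identity giving degree zero, irreducibility matched with stability, and a Donaldson--Uhlenbeck--Yau/Simpson-type existence argument for the converse), so the skeleton is sound.

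There are, however, two genuine gaps. First, the geometric reduction is misstated: for general $a\in\mathbb{C}/\Gamma_0$ there is no $S^1$-fibration $T^3\to T$. Writing $T^3=(\mathbb{R}\times\mathbb{C})/\Gamma$ with $\Gamma$ generated by $\Gamma_0\subset\{0\}\times\mathbb{C}$ and $(1,a)$, the natural projection is $T^3\to S^1$ with fiber the elliptic curve $T$ (a mapping torus of translation by $a$); a projection onto $T$ exists only when $a$ is torsion. Hence there is no pushforward bundle $V$ on $T\setminus D$: one obtains $V$ by restricting to a fiber, and the automorphism implementing $V(*D)\cong(\Phi^*)^{-1}(V)(*D)$ comes not from holonomy around a closed circle fiber but from the scattering map of the mini-holomorphic operator $\nabla_t-\mathrm{i}\phi$ across one period in $t$ --- which is also where $\phi$ actually enters; the holomorphic structure along $T$ is just the $(0,1)$-part of $\nabla$ in the fiber direction, made integrable by the Bogomolny equation, not ``$\nabla^{0,1}$ combined with $\phi$'' as you assign it. That the scattering map extends meromorphically across the punctures, with zeros and poles encoding the lattices $\mathbf{L}_x$, is precisely what the Dirac-type condition must be shown to deliver, and this requires proof rather than assertion. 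Second, the analytic heart is named but not supplied: ``continuity method with weighted estimates'' is the entire content of the existence direction. Concretely one must (i) build an initial metric realizing the exact Dirac asymptotics dictated by the weights $\alpha_x$ and lattices $\mathbf{L}_x$, (ii) establish a weighted Fredholm theory for the linearization on the punctured compact $T^3$, and (iii) convert stability of the difference module --- whose destabilizing subobjects are $\Phi^*$-invariant subspaces living over $T$, not subbundles over $T^3$ --- into the analytic stability statement that closes the a priori $C^0$-estimate. As written, your text is a correct program with a flawed geometric opening, not a proof.
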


\section{Conformal limits and Gaiotto's conjecture}

There are many interesting applications of the non-Abelian Hodge theory. Here we give a brief introduction as the beginning of this section. Throughout this section, $X$ will be denoted as a~compact Riemann surface of genus $g\geq2$. Let~$\mathcal{X}$ be the underlying smooth oriented surface, and denoted by~$K_X$ the canonical line bundle.

One important application of the non-Abelian Hodge theory is to study the theory for real Lie groups. Then non-Abelian Hodge theory is an important tool to find special connected components of the {\em Betti moduli space}\footnote{This moduli space is usually called {\em character variety} or {\em representation variety} by people work on surface group representation theory and higher Teichm\"uller theory, e.g., \cite{BC,Lab,Li,Wie}.} $M_{\mathrm{B}}(\mathcal{X},G):=\mathrm{Hom}(\pi_1(\mathcal{X}),G)/\!\!/G$ that parametrizes representations of $\pi_1(\mathcal{X})$ into the real Lie group $G$. We will give more details in Remark \ref{rmk3.1}.

Another important application of the non-Abelian Hodge theory is to study the relation between certain irreducible components\footnote{In fact, we will see in the next section that they are closed stata of the stratifications given by $\mathbb{C}^*$-action on the corresponding moduli spaces.} of $M_{\rm Dol}(X,r)$ and of $M_{\rm dR}(X,r)$. As holomorphic symplectic manifolds \cite{NH}, both $M_{\rm Dol}(X,r)$ and $M_{\rm dR}(X,r)$ have special holomorphic Lagrangian submanifolds that can be parametrized by holomorphic differentials on $X$. These are the {\em Hitchin section} in $M_{\rm Dol}$ (it is the image of the Hitchin base under a section of the Hitchin map) and the space of {\em opers} in $M_{\rm dR}$.

In this section, we will introduce a famous conjecture by Gaiotto that predicts the Hitchin section in the Dolbeault side and the space of opers in the de Rham side can be identified by a~map called {\em conformal limit}. Recently, this conjecture is confirmed by the authors of~\cite{DFKMMN}. In~\cite{CW}, by studying conformal limit, the authors generalize this result to any strata of the stratification of moduli spaces given by $\mathbb{C}^*$-action.

\subsection[$\mathbb{C}^*$-action on flat $\lambda$-connections]{$\boldsymbol{\mathbb{C}^*}$-action on flat $\boldsymbol{\lambda}$-connections}\label{sec3.1}

The Dolbeault moduli space $M_{\rm Dol}(X,r)$ admits a natural $\mathbb{C}^*$-action \cite{NH,CS3}:
\begin{gather*}
t\cdot\big[\big(E,\bar{\partial}_E,\theta\big)\big]:=\big[\big(E,\bar{\partial}_E,t\theta\big)\big].
\end{gather*}
This action is well-defined because it does not change the stability and vanishing of Chern classes. In \cite{CS6}, Simpson showed the existence of the coarse moduli space $M_{\rm Hod}(X,r)$ of semistable $\lambda$-flat bundles over $X$ of rank $r$ and with vanishing Chern classes. This moduli space admits a fibration $\pi\colon M_{\rm Hod}(X,r)\to\mathbb{C}$ such that $\pi^{-1}(1)=M_{\rm dR}(X,r)$ and $\pi^{-1}(0)=M_{\rm Dol}(X,r)$. Denoted by $M_{{\rm Hod}}^\lambda(X,r):=\pi^{-1}(\lambda)$ the fiber over $\lambda\in\mathbb{C}$.

Moreover, the $\mathbb{C}^*$-action on $M_{\rm Dol}(X,r)$ extends to an action of $\mathbb{C}^*$ on $M_{\rm Hod}(X,r)$:
\begin{gather*}
t\cdot\big[E,\bar{\partial}_E,D^\lambda,\lambda\big]:=\big[E,\bar{\partial}_E,tD^\lambda,t\lambda\big].
\end{gather*}
This action is well-defined and sends a flat $\lambda$-connection to a flat $t\lambda$-connection, so the fixed points must lie inside the fiber at $\lambda=0$, i.e., in $M_{\rm Dol}(X,r)$. By \cite[Lemma~4.1]{CS3}, the fixed points in $M_{\rm Dol}(X,r)$ are those Higgs bundles having the structure of systems of Hodge bundles,\footnote{In many contexts, these fixed points are called complex variations of Hodge structure.} that is, of the form
\begin{gather*}
\left[E=\bigoplus_{p=1}^kE^p,\, \bar{\partial}_E=\begin{pmatrix}
\bar{\partial}_{E^1}&&\\&\ddots&\\&&\bar{\partial}_{E^k}
\end{pmatrix}, \, \theta\colon E^p\to E^{p-1}\otimes\Omega_X^1\right].
\end{gather*}
Following the notations in~\cite{CS8}, let~$P$ be the subset consisting all the fixed points, and let $P=\coprod\limits_\alpha P_\alpha$ be the union of its connected components. This fixed point set will play an important role on the stratifications of moduli spaces, which will be described in the next section.

\subsection{Hitchin sections and opers}
Denote by $M_{\rm Dol}({\rm SL}(r,\mathbb{C}))$ the moduli space of semistable ${\rm SL}(r,\mathbb{C})$-Higgs bundles over $X$. There is a proper map, called the {\em Hitchin map}
\begin{align*}
h\colon \ M_{\rm Dol}({\rm SL}(r,\mathbb{C}))& \longrightarrow B:=\bigoplus_{i=2}^rH^0\big(X,K_X^i\big),\\
\big[E,\bar{\partial}_E,\theta\big]& \mapsto{\rm char}(\theta):=(q_2(\theta),\dots,q_r(\theta)),
\end{align*}
with each $q_i(\theta)=\operatorname{Tr}\big(\wedge^i\theta\big)$. This map defines a completely integrable system \cite{NH}. Fix a line bundle $L$ such that $L^r\cong K_X^{\frac{r(r-1)}{2}}$, there are~$r^{2g}$ choices of such $L$, and each choice makes the Hitchin base~$B$ parametrizes a section called the {\em Hitchin section}
\begin{gather*}
s_h\colon \ B\longrightarrow M_{\rm Dol}({\rm SL}(r,\mathbb{C})).
\end{gather*}
More explicitly, to a point $(q_2,\dots,q_r)\in B$ one associates a ${\rm SL}(r,\mathbb{C})$-Higgs bundle
\begin{gather*}
\left[
\big(E,\bar{\partial}_E\big)=L\oplus \big(L\otimes K_X^{-1}\big)\oplus\big(L\otimes K_X^{-2}\big)\oplus\cdots\oplus \big(L\otimes K_X^{-(r-1)}\big),\vphantom{\begin{pmatrix}
0&q_2&q_3&\cdots&q_r\\
a_1&0&q_2&\cdots&q_{r-1}\\
&a_2&0&\ddots&\vdots\\
&&\ddots&\ddots&q_2\\
&&&a_{r-1}&0\\
\end{pmatrix}}\right.\\
\left.\qquad
\theta=\begin{pmatrix}
0&q_2&q_3&\cdots&q_r\\
a_1&0&q_2&\cdots&q_{r-1}\\
&a_2&0&\ddots&\vdots\\
&&\ddots&\ddots&q_2\\
&&&a_{r-1}&0\\
\end{pmatrix}\right],
\end{gather*}
where each $a_i=\frac{i(r-i)}{2}$. For each choice of $L$, the image $s_h(B)$ of the Hitchin base~$B$ consists a~connected component of $M_{\rm Dol}({\rm SL}(r,\mathbb{C}))$, called a {\em Hitchin component}, and denoted as~${\rm Hit}_r$. This is related to special connected components of Betti moduli space (see the following remark).

\begin{Remark}\label{rmk3.1} The Betti moduli space $M_{\rm B}(\mathcal{X},{\rm PSL}(2,\mathbb{R}))\!:=\!\mathrm{Hom}(\pi_1(\mathcal{X}),{\rm PSL}(2,\mathbb{R}))/\!\!/{\rm PSL}(2,\mathbb{R})$ is the coarse moduli space of representations of $\pi_1(\mathcal{X})$ into ${\rm PSL}(2,\mathbb{R})$. In this Betti side, a Hitchin component usually means a connected component of $M_{\rm B}(\mathcal{X},{\rm PSL}(2,\mathbb{R}))$ that consists entirely the discrete and faithful representations $\rho\colon \pi_1(\mathcal{X})\to{\rm PSL}(2,\mathbb{R})$. In~\cite{Gol}, by using Euler number as invariant, Goldman showed that $M_{\rm B}(\mathcal{X},{\rm PSL}(2,\mathbb{R}))$ possesses $4g-3$ connected components with the corresponding Euler numbers from $2-2g$ to $2g-2$. Among these components it has two Hitchin components, as copies of the Teichm\"uller spaces $\mathrm{Teich}(\mathcal{X})$ and $\mathrm{Teich}\big(\bar{\mathcal{X}}\big)$, respectively. Moreover, each Hitchin component is homeomorphic to $\mathbb{R}^{6g-6}$.

In \cite{NH2}, Hitchin introduced the {\em $r$-Fuchsian representations} ($r>2$): representations $\rho\colon \pi_1(\mathcal{X})\allowbreak \to{\rm PSL}(r,\mathbb{R})$ that are the composition of a discrete and faithful representation $\rho_0\colon \pi_1(\mathcal{X})\allowbreak \to{\rm PSL}(2,\mathbb{R})$ and the unique irreducible representation $i\colon {\rm PSL}(2,\mathbb{R})\to {\rm PSL}(r,\mathbb{R})$. A connected component of $M_{\rm B}(\mathcal{X},{\rm PSL}(r,\mathbb{R}))$ that contains $r$-Fuchsian representations would be still called a~Hitchin component. By applying the Higgs bundle theory, Hitchin showed that \linebreak $M_{\rm B}(\mathcal{X},{\rm PSL}(r,\mathbb{R}))$ possesses~3 connected components when~$r$ is odd, and~6 connected components when $r$ is even. Among these components it has 1 Hitchin component when $r$ is odd, and 2 Hitchin components when $r$ is even. Moreover, each Hitchin component is homeomorphic to $\mathbb{R}^{(2g-2)(r^2-1)}$.

A further step in \cite{NH2} is the study of special connected components (i.e., Hitchin components) of $M_{\rm B}\big(\mathcal{X},G^r\big)$ for any split real form $G^r$ of a complex simple Lie group $G$. Hitchin showed this moduli space has Hitchin components homeomorphic to $\mathbb{R}^{(2g-2)\dim(G^r)}$.

A natural generalization of Hitchin's theory is to consider more general real non-compact semisimple Lie groups, then the study of some spacial components of the Betti moduli space leads to a new filed: {\em higher Teichm\"uller theory}. These special connected components are usually called {\em higher Teichm\"uller spaces}. For example, when the Lie groups are non-compact of Hermitian type, then higher Teichm\"uller spaces correspond to those connected components with maximal Toledo invariant. The corresponding representations are called {\em maximal representations}, they are showed to be Anosov by Labourie~\cite{Lab}. In particular, they are discrete and faithful. Recently, higher Teichm\"uller spaces are found for some special real Lie groups, e.g.,~\cite{BIW0,ABC}. Moreover, there are many good references on the introduction and the development of this theory from different points of view (e.g., \cite{DA,BIW,BC,OG,Wie}).
\end{Remark}

\begin{Example}When $r=2$, we fix a line bundle $L$ such that $L^2\cong K_X$, we will write it as $K_X^{\frac{1}{2}}$. Then the corresponding Higgs bundles in ${\rm Hit}_2$ have the form
\[
\left(\big(E,\bar{\partial}_E\big)=K_X^{\frac{1}{2}}\oplus K_X^{-\frac{1}{2}},\, \theta=\begin{pmatrix}
0&q_2\\1&0
\end{pmatrix}\right),
\]
 so the Hitchin section is parametrized by the quadratic differentials $q_2\in H^0\big(X,K_X^2\big)$. Moreover, for any~$q_2$, this Higgs bundle is stable, since the only $\theta$-invariant proper subbundle is $K_X^{-\frac{1}{2}}$. ${\rm Hit}_2$ describes the Teichm\"uller space ${\rm Teich}(\mathcal{X})$ in terms of Higgs bundles, and the quadratic differential~$q_2$ measures the non-conformality of the harmonic diffeomorphism $(\mathcal{X},g_0)\to(\mathcal{X},g)$~\cite{Li}.
\end{Example}

For any $r$, each Higgs bundle $\big[E,\bar{\partial}_E,\theta\big]$ in ${\rm Hit}_r$ is stable. Now let $g_t$ be a gauge transformation given by
\begin{gather*}
g_t=\begin{pmatrix}
t^{\frac{r-1}{2}}&&\\&\ddots&\\&&t^{-\frac{r-1}{2}}
\end{pmatrix},
\end{gather*}
then
\begin{align*}
\lim_{t\to0}g_t\cdot(t\cdot\theta)\cdot g_t^{-1}=\begin{pmatrix}
0&&&\\a_2&0&&\\&\ddots&\ddots&\\&&a_{r-1}&0
\end{pmatrix}.
\end{align*}
As the limit of the $\mathbb{C}^*$-action on $\big[E,\bar{\partial}_E,\theta\big]$, the Higgs bundle{\samepage
\begin{gather*}
\left(\big(E,\bar{\partial}_E\big)=L\oplus \big(L\otimes K_X^{-1}\big)\oplus\big(L\otimes K_X^{-2}\big)\oplus\cdots\oplus \big(L\otimes K_X^{-(r-1)}\big),\vphantom{\begin{pmatrix}
0&&&\\a_2&0&&\\&\ddots&\ddots&\\&&a_{r-1}&0
\end{pmatrix}}\right.\\
\left. \qquad{} \theta'=\begin{pmatrix}
0&&&\\a_2&0&&\\&\ddots&\ddots&\\&&a_{r-1}&0
\end{pmatrix}\right)
\end{gather*}
is stable. Therefore, $\big[E,\bar{\partial}_E,\theta\big]$ is also stable, since stability is an open condition.}

For each $L$, the Higgs bundle $s_h(0,\dots,0)$ corresponds to the image of the zero point $(0,\dots,0)\allowbreak \in B$ under the Hitchin section~$s_h$ is called a~{\em uniformizing Higgs bundle}. We also call it a~{\em Fuchsian point}, since the associated flat bundle corresponds to the Fuchsian representation that uniformizes the Riemann surface itself~\cite{NH2}. In fact, in~\cite{NH2}, Hitchin showed that for each choice of $L$ such that $L^r\cong K_X^{\frac{r(r-1)}{2}}$, Higgs bundles in the Hitchin section ${\rm Hit}_r$ have real monodromy representations
\begin{gather*}
\rho\colon \ \pi_1(X)\to {\rm SL}(r,\mathbb{R}),
\end{gather*}
that is, elements in ${\rm Hit}_r$ are ${\rm SL}(r,\mathbb{R})$-Higgs bundles.

Another object we will introduce is {\em oper}, as a special object in the de Rham moduli spa\-ce~$M_{\rm dR}$.

For any complex connected reductive Lie group $G$, the notion of $G$-oper was introduced and studied by Beilinson and Drinfeld in~\cite{BD}. Here for our study of the correspondence between Hitchin sections and opers, we just consider the case when $G={\rm SL}(r,\mathbb{C})$.

\begin{Definition}A {\em ${\rm SL}(r,\mathbb{C})$-oper} over~$X$ is a triple $(E,\nabla,F^\bullet)$, where $E$ is a holomorphic vector bundle of rank $r$ equipped with an isomorphism $\Lambda^rE\cong\mathcal{O}_X$, $\nabla\colon E\to E\otimes_{\mathcal{O}_X}K_X$ is a holomorphic flat connection, and $F^\bullet$ is a filtration given by holomorphic subbundles of~$E$:
\begin{gather*}
0=F^r\subseteq F^{r-1}\subseteq\cdots\subseteq F^1\subseteq F^0=E
\end{gather*}
satisfies the following three conditions:
\begin{itemize}\itemsep=0pt
\item[(1)] the filtration is of full flag, i.e., each graded term $E^i:=F^i/F^{i+1}$ is a line bundle;
\item[(2)] Griffiths transversality: $\nabla\colon F^i\to F^{i-1}\otimes_{\mathcal{O}_X}K_X$ for $i=1,\dots,r$;
\item[(3)] the induced map $\theta\colon E^i\to E^{i-1}\otimes_{\mathcal{O}_X}K_X$ is an isomorphism for $i=1,\dots,r$.
\end{itemize}
\end{Definition}

\subsection{Conformal limits and Gaiotto's conjecture}

As holomorphic Lagrangian submanifolds of moduli spaces, both Hitchin section and the space of opers are parametrized by the Hitchin base $\bigoplus\limits_{i=2}^rH^0\big(X,K_X^i\big)$ (details can be found in \cite[Section~2.7]{CW}). Moreover, they appear as closed subsets of the moduli spaces~\cite{BD,CW, NH}. The non-Abelian Hodge correspondence relates the moduli spaces~$M_{\rm Dol}$ and~$M_{\rm dR}$, however, it doesn't relate the Hitchin section and the space of opers. One would hope to find a map to relate these two Lagrangian submanifolds, and if possible, gives an identification between them. Gaiotto's conjecture arises from this consideration.

Let $\big[E,\bar{\partial}_E,\theta\big]\in M_{\rm Dol}(X,r)$ be a (poly)stable Higgs bundle with the pluri-harmonic metric denoted as $h$. Then for any $R>0$, the $\mathbb{C}^*$-action gives us a family of (poly)stable Higgs bundles $\big[E,\bar{\partial}_E,R\theta\big]$ with the corresponding pluri-harmonic metrics denoted as~$h_R$. Each $\big[E,\bar{\partial}_E,R\theta\big]$ determines a family of flat connections
\begin{gather*}
D_{R,\lambda}:=\bar{\partial}_E+\partial_{E,h_R}+\lambda^{-1}R\theta+\lambda R\theta^{\dagger}_{h_R}
\end{gather*}
parametrized by $\lambda\in\mathbb{C}^*$. If we fix the product $\hslash=\lambda R^{-1}$, then we can obtain a family of flat connections
\begin{gather*}
D_{R,\hslash}:=\bar{\partial}_E+\partial_{E,h_R}+\hslash^{-1}\theta+\hslash R^2\theta^{\dagger}_{h_R}
\end{gather*}
parametrized by $R\in\mathbb{R}^+$. The study of the limit $\lim\limits_{R\to0}D_{R,\hslash}$ (if exits) of the flat connection~$D_{R,\hslash}$ is an interesting question, this is the following definition:

\begin{Definition}
For a (poly)stable Higgs bundle $\big[E,\bar{\partial}_E,\theta\big]\in M_{\rm Dol}(X,r)$, the limit
\begin{gather*}
\lim_{R\to0}\big(E,\bar{\partial}_E+\hslash R^2\theta^{\dagger}_{h_R},\partial_{E,h_R}+\hslash^{-1}\theta\big)
\end{gather*}
if exists, is called its {\em conformal limit}.
\end{Definition}

We can interpret this flat connection in terms of $\lambda$-connections and its $\mathbb{C}^*$-action introduced in Section~\ref{sec3.1}. For each $R>0$, every stable Higgs bundle $\big[E,\bar{\partial}_E,\theta\big]$ determines a stable Higgs bundle $\big[E,\bar{\partial}_E,R\theta\big]$, thus a $\lambda$-flat bundle $
\big[E,\bar{\partial}_E+\lambda R\theta^{\dagger}_{h_R}, \lambda\partial_{E,h_R}+R\theta\big]$ by non-Abelian Hodge correspondence. In fact, this $\lambda$-flat bundle is obtained from $\lambda$ acts on the Higgs bundle $\big[E,\bar{\partial}_E+\lambda R\theta^{\dagger}_{h_R},\partial_{E,h_R}+\lambda^{-1}R\theta\big]$:
\begin{gather*}
\big[E,\bar{\partial}_E+\lambda R\theta^{\dagger}_{h_R},\lambda\partial_{E,h_R}+R\theta\big]=\lambda\cdot\big[E,\bar{\partial}_E+\lambda R\theta^{\dagger}_{h_R},\partial_{E,h_R}+\lambda^{-1}R\theta\big].
\end{gather*}
 Again by non-Abelian Hodge correspondence, this Higgs bundle corresponds to a flat bundle, its flat connection is exactly~$D_{R,\hslash}$.

For special choice of the conformal constant $\hslash$ and the initial Higgs bundle $\big[E,\bar{\partial}_E,\theta\big]$, its confromal limit recovers the non-Abelian Hodge map ${\rm NAH}$ defined in Section~\ref{sec2.1}. In fact, when $\hslash=1$ and $\big[E,\bar{\partial}_E,\theta\big]$ is a fixed point of the $\mathbb{C}^*$-action, then
\begin{align*}
\lim_{R\to0}D_{R,1}={\rm NAH}\big(\bar{\partial}_E,\theta\big).
\end{align*}

In \cite{Gai}, Gaiotto proposed the following conjecture that relates the Hitchin component and the locus of opers by the conformal limits:

\begin{Conjecture}For any Higgs bundle in the Hitchin component, its conformal limit exists and is an oper. Moreover, it gives a biholomorphism between the Hitchin component and the space of opers.
\end{Conjecture}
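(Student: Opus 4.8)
The plan is to exploit the explicit companion-matrix description of the Hitchin section together with sharp control of the pluri-harmonic metrics under the scaling $\theta\mapsto R\theta$, and to read off the oper as $R\to 0$. First I would fix $\big[E,\bar{\partial}_E,\theta\big]=s_h(q_2,\dots,q_r)$ in ${\rm Hit}_r$, with $E=\bigoplus_{i=0}^{r-1}\big(L\otimes K_X^{-i}\big)$ and $\theta$ the companion field whose subdiagonal entries are the nonzero constants $a_i$ and whose superdiagonal entries record the differentials $q_j$. The grading of $E$ by the summands $L\otimes K_X^{-i}$ supplies a full-flag filtration $F^\bullet$ adapted to an oper, while the subdiagonal constants $a_i$ are the candidates for the graded isomorphisms $E^i\to E^{i-1}\otimes K_X$ demanded by the oper axioms. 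The target is thus to prove that $\lim_{R\to0}D_{R,\hslash}$ exists and, in a frame adapted to $F^\bullet$, has exactly this companion shape with vanishing anti-holomorphic part.

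The analytic core is the behavior of the pluri-harmonic metric $h_R$ of $\big[E,\bar{\partial}_E,R\theta\big]$ as $R\to0$. Since the underlying bundle is a direct sum of line bundles of distinct degrees, $h_R$ cannot converge: it degenerates, its restrictions to the graded summands scaling at different rates in $R$. I would first argue, using the real ${\rm SL}(r,\mathbb{R})$-structure of the Hitchin section and the uniqueness in Theorem~\ref{thm2.9}, that $h_R$ may be taken adapted to the grading, so that the Hitchin equation $F_{h_R}+R^2\big[\theta,\theta^\dagger_{h_R}\big]=0$ reduces to a generalized Toda--Hitchin system for the diagonal entries. The decisive step is a precise asymptotic analysis of this system, giving the leading $R$-dependence of each entry with uniform control of the error. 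With this in hand I would introduce a complex gauge transformation $g_R$, diagonal with respect to the grading and built from the leading behavior of $h_R$, so that conjugation normalizes the subdiagonal of $\hslash^{-1}\theta$ to the constants $a_i$, forces $\hslash R^2 g_R\theta^\dagger_{h_R}g_R^{-1}\to0$, and makes $g_R\partial_{E,h_R}g_R^{-1}$ (including the Maurer--Cartan correction from the derivative of $g_R$) converge to a holomorphic connection. This decoupling of the Hitchin equation in the conformal limit, i.e., the sharp control of the degenerating family $h_R$, is the main obstacle and the only place where genuine PDE input is required; everything else is algebraic.

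Granting the limit, I would verify that $\nabla_\infty:=\lim_{R\to0}D_{R,\hslash}$ is an oper. The grading of $E$ yields a full-flag filtration, and the surviving subdiagonal constants $a_i$ of $\hslash^{-1}\theta$ provide, via the standard identification of companion connections with opers, the graded isomorphisms of the third oper axiom; Griffiths transversality is then forced by the companion shape, the anti-holomorphic part having already been killed, so $\nabla_\infty$ is a genuine holomorphic flat $\big(E,\nabla_\infty,F^\bullet\big)$ with the differentials $q_2,\dots,q_r$ reappearing as its invariants. This defines a map from ${\rm Hit}_r$ to the space of opers. For the biholomorphism, I would use that both are affine spaces modeled on the Hitchin base $B=\bigoplus_{i=2}^rH^0\big(X,K_X^i\big)$; tracking the superdiagonal data through $g_R$ and the limit shows that $\nabla_\infty$ records exactly these differentials up to explicit $\hslash$-powers, so the map is affine and bijective in the base coordinates. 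Finally, holomorphic dependence of $h_R$, hence of $g_R$ and $\nabla_\infty$, on $(q_2,\dots,q_r)$, together with the uniform convergence from the second step, yields holomorphicity of both the map and its inverse, giving the asserted biholomorphism. The same strategy, with the grading replaced by the weight decomposition at a general fixed component $P_\alpha$ of the $\mathbb{C}^*$-action, is what one expects to extend the statement to arbitrary strata, as in~\cite{CW}.
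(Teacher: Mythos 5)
First, note that the paper itself offers no proof of this statement: it is recorded as Gaiotto's conjecture \cite{Gai}, with the proof attributed to \cite{DFKMMN} (Theorem~\ref{thm3}) and the generalization to arbitrary stable fixed points to \cite{CW} (Theorem~\ref{CW}). So your proposal can only be judged as a self-contained argument, and as such it has a genuine gap, in fact two. The decisive one is the step you yourself flag as ``the main obstacle'': the sharp asymptotics of the degenerating family $h_R$ as $R\to0$, with uniform error control strong enough to make $g_R\partial_{E,h_R}g_R^{-1}$ (plus the Maurer--Cartan term) converge and $\hslash R^2 g_R\theta^\dagger_{h_R}g_R^{-1}\to0$. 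This is not a technical lemma to be granted; it is essentially the entire content of the theorem. Everything downstream (the companion shape of the limit, the oper axioms, the affine identification with $B$) is routine linear algebra once the limit exists, so a proposal that assumes the limit has assumed the conjecture.

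The second gap is that your proposed route to that analytic core would fail as stated. You argue that the ${\rm SL}(r,\mathbb{R})$-structure of the Hitchin section and the uniqueness in Theorem~\ref{thm2.9} let you take $h_R$ adapted to the grading $E=\bigoplus_{i=0}^{r-1}\big(L\otimes K_X^{-i}\big)$, reducing the Hitchin equation to a Toda-type system for the diagonal entries. That reduction is only valid at \emph{cyclic} points, e.g., when $q_2=\cdots=q_{r-1}=0$, where a $\mathbb{Z}_r$-symmetry of the Higgs field combines with uniqueness of the pluri-harmonic metric to force diagonality (the $r=2$ case is misleading here, since $\theta\mapsto-\theta$ is always realized by a diagonal gauge). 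For a general point $(q_2,\dots,q_r)\in B$ the superdiagonal entries destroy this symmetry, the harmonic metric is not diagonal in the given splitting, and there is no Toda system; the real structure constrains $h_R$ (compatibility with the pairing $E\cong E^*$) but does not diagonalize it. This is precisely why the actual proofs take different routes: \cite{DFKMMN} work with the full (non-diagonal) family and control it via the filtration and scaling, while \cite{CW} avoid asymptotics along the family altogether, using the Bia{\l}ynicki-Birula stratification and only the convergence of $h_R$ to the harmonic metric of the \emph{stable limiting fixed point} $u=\lim_{t\to0}t\cdot\big[E,\bar{\partial}_E,\theta\big]$, then producing the oper by an essentially algebraic deformation argument on the affine fibers $G_\alpha^0(u)$ and $G_\alpha^1(u)$. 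If you want to salvage your strategy, you should either restrict the Toda analysis to the cyclic locus and find a separate argument to propagate, or replace the diagonality claim by the filtered-convergence input that \cite{CW} actually use.
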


In the paper of \cite{DFKMMN}, the authors confirmed this conjecture as the following theorem:

\begin{Theorem}[\cite{DFKMMN}]\label{thm3}For any simple and simply connected Lie group~$G$. If $\big(E,\bar{\partial}_E,\theta\big)$ is a~$G$-Higgs bundle in the Hitchin component, then its conformal limit exists and lies in the space of opers. Moreover, it gives a biholomorphism between the Hitchin component and the space of opers.
\end{Theorem}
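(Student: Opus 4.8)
The plan is to prove the statement first for $G=\mathrm{SL}(r,\mathbb{C})$, where the Hitchin-section Higgs bundles have the explicit companion form recalled above, and then to reduce the general case to this one through the principal $\mathfrak{sl}_2$-triple that underlies both the Hitchin section and the oper construction. Fix a point $(q_2,\dots,q_r)$ of the Hitchin base, let $(E,\bar\partial_E,\theta)$ be the associated Higgs bundle in $\mathrm{Hit}_r$, and let $h$ be its harmonic metric. Since the underlying holomorphic bundle $E=\bigoplus_{i=0}^{r-1}L\otimes K_X^{-i}$ is a direct sum of line bundles of distinct slopes, and hence unstable, the harmonic metric $h_R$ of the scaled pair $(E,\bar\partial_E,R\theta)$ must degenerate as $R\to 0$; controlling this degeneration is the whole purpose of the conformal rescaling $\hslash=\lambda R^{-1}$, and it is the first thing I would analyze.

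By uniqueness of the harmonic metric (Theorem~\ref{thm2.9}) together with the real (split) structure of the Hitchin section --- recall that its monodromy lies in $\mathrm{SL}(r,\mathbb{R})$ --- the metric $h_R$ is diagonal with respect to the decomposition $E=\bigoplus_i L\otimes K_X^{-i}$, so that Hitchin's self-duality equation reduces to a Toda-type system of scalar equations for the diagonal entries $h_R^i$. For the Fuchsian point $(0,\dots,0)$ these are solved explicitly in terms of powers of the hyperbolic metric; for a general point one treats them as a perturbation. The analytic heart of the argument is then to show that, after conjugating by the diagonal gauge transformation $g_R=\mathrm{diag}\big(R^{(r-1)/2},\dots,R^{-(r-1)/2}\big)$ already appearing in the text, the rescaled metrics converge as $R\to 0$ and that the adjoint term is controlled, so that $\hslash R^2\, g_R\theta^{\dagger}_{h_R}g_R^{-1}\to 0$ while $g_R\partial_{E,h_R}g_R^{-1}$ converges to a finite limit.

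Granting these estimates, $g_R\cdot D_{R,\hslash}\cdot g_R^{-1}$ converges as $R\to 0$ to a connection $\nabla_\infty$, which is flat because flatness is preserved under limits and each $D_{R,\hslash}$ is flat. By construction $\hslash^{-1}g_R\theta g_R^{-1}$ keeps its subdiagonal entries $a_i$ as nonzero isomorphisms and carries the differentials $q_i$ in its remaining entries, while the $\theta^{\dagger}$ contribution has been annihilated. Reading off the blocks in the frame adapted to the filtration of $E$ by partial sums of the line subbundles shows that $\nabla_\infty$ preserves the full flag $F^\bullet$, satisfies Griffiths transversality, and induces isomorphisms on the associated graded; that is, $\big(E,\nabla_\infty,F^\bullet\big)$ is an $\mathrm{SL}(r,\mathbb{C})$-oper. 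This proves that the conformal limit exists and lands in the space of opers.

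To upgrade this to a biholomorphism I would compare the two parametrizations by the Hitchin base $\bigoplus_{i=2}^r H^0\big(X,K_X^i\big)$: both $\mathrm{Hit}_r$ and the oper locus are sections over it, and the computation above shows that the conformal limit sends the Higgs bundle with differentials $(q_2,\dots,q_r)$ to the oper with the same differentials up to explicit nonzero constants, so the induced map on base coordinates is an explicit affine isomorphism. Holomorphic dependence follows from the smooth dependence of the harmonic metric on the Higgs field, whence the map is a biholomorphism onto the oper locus. The extension to an arbitrary simple and simply connected $G$ proceeds by replacing the companion nilpotent with the principal nilpotent $e$ of a principal $\mathfrak{sl}_2\subset\mathfrak{g}$: the Hitchin section, the oper filtration, and the scaling torus are all governed by this embedding, and the diagonalization and limiting analysis apply weight-space by weight-space. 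I expect the decisive obstacle to be exactly the estimate of the second paragraph --- the uniform control of the degenerating metric $h_R$ and the decay rate of $\theta^{\dagger}_{h_R}$ as $R\to 0$ --- which is a genuine singular-perturbation problem for Hitchin's equations rather than a formal computation.
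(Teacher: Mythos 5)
You are comparing against a survey that states Theorem~\ref{thm3} purely as a citation: the paper contains no proof, so your proposal can only be measured against the argument of \cite{DFKMMN} itself, whose scaffolding (the explicit companion form for $\mathrm{SL}(r,\mathbb{C})$, the grading gauge $g_R$, reduction of general $G$ to the principal $\mathfrak{sl}_2$) you have correctly reconstructed. But two of your central steps fail. First, the harmonic metric $h_R$ is \emph{not} diagonal with respect to $E=\bigoplus_i L\otimes K_X^{-i}$ at a general point $(q_2,\dots,q_r)$ of the Hitchin base. Diagonality is forced by a finite symmetry only in the cyclic case (all $q_j=0$ except $q_r$; this covers the Fuchsian point and all of ${\rm Hit}_2$, which may be the source of the error): conjugating by the unitary $g_t$ with $|t|=1$ sends $\theta_q$ to $t^{-1}\theta_{q(t)}$ with $q_j(t)=t^jq_j$, and this fixes $q$ only when a root of unity acts. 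The $\mathrm{SL}(r,\mathbb{R})$ reality of the Hitchin section makes $h$ compatible with a symmetric pairing, not diagonal. So the Toda reduction you rely on is unavailable for $r\geq3$, and your ``analytic heart'' never gets started. What replaces it in \cite{DFKMMN} is an equivariance trick that dissolves the singular perturbation problem you anticipate as the decisive obstacle: $g_R(R\theta_q)g_R^{-1}=\theta_{\tilde q}$ with $\tilde q=\big(R^2q_2,\dots,R^rq_r\big)$, so by the uniqueness in Theorem~\ref{thm2.9} one has $h_R=g_R^{*}h_{\tilde q}$, where $h_{\tilde q}$ is the harmonic metric of the Hitchin-section bundle at $\tilde q$; since $\tilde q\to0$ as $R\to0$, the only analytic input is smooth dependence of $h_q$ on $q$ near the stable Fuchsian point (implicit function theorem), not uniform control of a degenerating family.

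Second, your limit bookkeeping is inconsistent, and the connection you claim as the limit is wrong. In the $g_R$-gauged frame the Higgs term is $\hslash^{-1}g_R\theta_qg_R^{-1}=\hslash^{-1}\big(R^{-1}e+\sum_jR^{j-1}q_j\big)$, where $e$ denotes the subdiagonal nilpotent with entries $a_i$: the $a_i$ blow up like $R^{-1}$ and the $q_j$ decay, the exact opposite of your assertion, so in that frame no limit exists. In the fixed frame, where the conformal limit does exist, the adjoint term is \emph{not} annihilated: $\hslash R^2\theta^{\dagger}_{h_R}=\hslash R\,g_R^{-1}\theta_{\tilde q}^{\dagger_{h_{\tilde q}}}g_R\to\hslash\, e^{\dagger_{h_0}}$, with only the $q_j$-adjoint components vanishing. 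This survival is forced: the underlying holomorphic bundle of an oper is the nonsplit iterated extension, produced precisely by deforming $\bar{\partial}_E$ by $\hslash e^{\dagger_{h_0}}$; your claimed limit $\bar{\partial}_E+\partial_{E,h_0}+\hslash^{-1}\theta_q$ lives on the split bundle and is not even flat, since Hitchin's equation at the Fuchsian point gives $F(\nabla_{h_0})=-\big[e,e^{\dagger_{h_0}}\big]\neq0$, and ``flatness is preserved under limits'' cannot rescue a limit that is wrong in one gauge and nonexistent in the other. Two smaller repairs: smooth dependence of the harmonic metric yields only real-analytic dependence (the non-Abelian Hodge map itself is not holomorphic), so holomorphy of the conformal-limit map should instead be read off from the explicit limiting oper, which is affine in $(q_2,\dots,q_r)$ with $h_0$ fixed; with that, your biholomorphism step and the principal-$\mathfrak{sl}_2$ reduction for general simple, simply connected $G$ do match \cite{DFKMMN}.
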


In fact, the locus ${\rm NAH}(s_h(B))$ of the Hitchin section under the non-Abelian Hodge correspondence in $M_{\rm dR}(G)$ intersects with the space ${\rm Op}(r)$ of opers transversely at the uniformizing point $s_h(0,\dots,0)$~\cite{LW}. Theorem~\ref{thm3} tells us the conformal limits give an identification between them.

In \cite{CW}, the authors studied the conformal limit for general Higgs bundle $\big[E,\bar{\partial}_E,\theta\big]\in M_{\rm Dol}$ such that the limiting point $\big[E,\bar{\partial}_0,\theta_0\big]:=\lim\limits_{t\to0}t\cdot[E,\bar{\partial}_E,\theta]$ is stable. They obtained the following general conformal limit correspondence:

\begin{Theorem}[\cite{CW}]\label{CW}
For any Higgs bundle $\big[E,\bar{\partial}_E,\theta\big]\in M_{\rm Dol}(\mathrm{SL}(r,\mathbb{C}))$ such that the limiting point $u:=\lim\limits_{t\to0}t\cdot\big[E,\bar{\partial}_E,\theta\big]\in P_\alpha$ is stable, its conformal limit always exists. Moreover, it gives a biholomorphism between $G_\alpha^0(u)$ and $G_\alpha^1(u)$.
\end{Theorem}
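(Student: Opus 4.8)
The plan is to run the argument entirely inside the Hodge moduli space $M_{\rm Hod}(X,r)$ and its $\mathbb{C}^*$-action, reducing the analytic conformal limit to the Bia{\l}ynicki-Birula structure around the stable fixed point $u$. For the attracting set $G_\alpha\subset M_{\rm Hod}$ of $P_\alpha$ one has $G_\alpha^\mu=G_\alpha\cap\pi^{-1}(\mu)$, so that $G_\alpha^0(u)\subset M_{\rm Dol}$ and $G_\alpha^1(u)\subset M_{\rm dR}$ are the fibers over $\mu=0,1$ of the sub-attracting set $G_\alpha(u)=\{x:\lim_{t\to0}t\cdot x=u\}$. The computation preceding the statement exhibits $D_{R,\hslash}$, for each fixed $R$, as the twistor-family flat connection at parameter $\lambda=\hslash R$ of the harmonic bundle $\big[E,\bar\partial_E,R\theta\big]$; thus the conformal limit is a genuinely analytic degeneration ($R\to0$ with the ratio $\hslash=\lambda R^{-1}$ held fixed, so that the twistor parameter $\lambda\to0$) and \emph{not} the Bia{\l}ynicki-Birula flow $\lim_{t\to0}t\cdot(-)$, which would instead collapse everything to $u$. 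I would first record that, because $u$ is stable, $M_{\rm Hod}$ is smooth at $u$, so all of the Bia{\l}ynicki-Birula machinery is available.

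Second I would produce the abstract model for the two strata. Linearizing the $\mathbb{C}^*$-action gives a weight decomposition $T_uM_{\rm Hod}=\bigoplus_k T_u^{(k)}$, and since $\pi$ is equivariant for the weight-one action on $\mathbb{C}$ the differential $d\pi$ is supported on $T_u^{(1)}$. By Bia{\l}ynicki-Birula at a smooth fixed point, $G_\alpha(u)$ is an affine space modeled on the positive-weight part $N_u^+=\bigoplus_{k>0}T_u^{(k)}$, the weight-$k$ summand being the type-$k$ piece of the hypercohomology $\mathbb{H}^1$ of the deformation complex of the system of Hodge bundles $u$. Equivariance forces $\pi|_{G_\alpha(u)}$ to agree with the linear weight-one projection $N_u^+\to\mathbb{C}$, which is onto because turning on the $\lambda$-connection direction is a nonzero weight-one deformation. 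Hence $G_\alpha^0(u)$ and $G_\alpha^1(u)$ are parallel affine subspaces of $G_\alpha(u)$, each modeled on $\ker(d\pi|_{N_u^+})$; this is the biholomorphism to be matched, and it remains only to show that the conformal limit realizes a map of this form.

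The analytic heart, and the step I expect to be the main obstacle, is the existence of $\lim_{R\to0}D_{R,\hslash}$. Here one must control the rescaled pluri-harmonic metrics $h_R$ solving Hitchin's equations for $\big[E,\bar\partial_E,R\theta\big]$ as $R\to0$. Because $v\in G_\alpha^0(u)$ flows to $u$, it carries a limiting (Hodge) filtration $F^\bullet$ with associated graded the stable variation of Hodge structure $u$; I would apply the gauge transformation adapted to $F^\bullet$ (the analogue of the diagonal $g_t$ of the Example) and prove that the normalized family $h_R$ converges, with all derivatives, to the harmonic metric of $u$. The decisive use of stability of $u$ enters exactly here: stability makes the linearized Hitchin operator at $u$ invertible on the relevant Sobolev completions, which upgrades the formal decoupling of the equations in the limit into genuine uniform a priori estimates, and these force $\partial_{E,h_R}$ and $\hslash R^2\theta^\dagger_{h_R}$ to converge. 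Flatness passes to the limit, so $\lim_{R\to0}D_{R,\hslash}$ is a well-defined flat connection on $E$.

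Finally I would identify the limit and conclude. The limiting holomorphic structure $\bar\partial_E+\lim_{R\to0}\hslash R^2\theta^\dagger_{h_R}$ together with the filtration $F^\bullet$, along which $\theta$ shifts by one step, equips the limit with the flag data exhibiting it as a point of $M_{\rm dR}$ whose $\mathbb{C}^*$-flow in $M_{\rm Hod}$ contracts to $u$; that is, it lies in $G_\alpha^1(u)$. Since the connections $D_{R,\hslash}$ vary holomorphically in the Higgs data and the estimates above can be taken uniform over compact families, the induced map $G_\alpha^0(u)\to G_\alpha^1(u)$ is holomorphic. To finish I would check that it preserves $F^\bullet$ and induces the identity on the associated graded $u$; by the second step this pins it down to an affine isomorphism of the two parallel models $\ker(d\pi|_{N_u^+})$, hence a biholomorphism. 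Bijectivity can alternatively be confirmed by running the construction from the de Rham side, recovering the original Higgs data as the reverse limit.
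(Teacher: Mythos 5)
Your skeleton is in fact the architecture of the actual proof in \cite{CW} (note this survey gives no proof at all --- the theorem is quoted from Collier--Wentworth, so the comparison must be with their paper): identify both fibers $G_\alpha^0(u)$ and $G_\alpha^1(u)$ with the positive-weight affine model at the stable fixed point, then realize the conformal limit as a map between these models. Your analytic step, however, should be reorganized: the decisive observation in \cite{CW} is that $h_R$ is, up to the explicit grading gauge $g_R=R^{H}$ implementing the $\mathbb{C}^*$-flow, the harmonic metric of the flowed point $R\cdot\big[E,\bar{\partial}_E,\theta\big]$, which converges in the moduli space to the stable point $u$; openness of stability plus smooth dependence of solutions of Hitchin's equations on the stable locus then give convergence of the gauged metrics, and this is the only place where invertibility of the linearized operator legitimately enters. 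Your ``uniform a priori estimates'' are essentially a re-derivation of this, but you must say explicitly that every estimate is taken \emph{after} conjugating by $g_R$: the naive limit of $\big(\bar{\partial}_E,R\theta\big)$ is $\big(\bar{\partial}_E,0\big)$, whose underlying bundle is typically unstable, so no uniform control is available in the un-gauged frame. Moreover, existence of the limit is not purely soft even granted convergence of metrics: un-gauging reintroduces negative powers $R^{-k}$ in the adjoint term, and one must verify weight-by-weight that the prefactor $\hslash R^{2}$ kills all of them except the weight-zero piece, whose limit is $\hslash\,\theta_0^{\dagger_{h_0}}$ built from the VHS datum $u$; this term-by-term computation is what your sketch replaces with the vaguer ``flatness passes to the limit''.

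The genuine gap is your last step: you construct a holomorphic map $G_\alpha^0(u)\to G_\alpha^1(u)$ but never prove it is bijective. The inference ``preserves $F^\bullet$ and induces the identity on the associated graded $u$, hence an affine isomorphism of the two parallel models'' does not follow --- a holomorphic map can fix the base point and act as the identity on graded pieces without being injective or surjective; what is needed is a triangularity statement with invertible leading term, and that only comes out of the explicit formula for the limit. Your fallback, ``running the construction from the de Rham side, recovering the original Higgs data as the reverse limit'', is not defined: the conformal limit as you set it up is a one-way $R\to0$ degeneration, and no inverse procedure producing Higgs data from a flat connection in $G_\alpha^1(u)$ has been exhibited. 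Collier--Wentworth close exactly this gap by writing every point of each fiber in a normal form adapted to the weight decomposition at $u$ (both fibers being identified with the positive-weight subspace of the hypercohomology $\mathbb{H}^1$ of the deformation complex of $u$) and computing the conformal limit in these coordinates, where it is an explicit affine, unipotent-triangular, hence visibly biholomorphic map. Without that computation the ``Moreover'' half of the theorem remains unproved in your proposal.
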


Here $G_\alpha^0$ and $G_\alpha^1$ stand for the strata in $M_{\rm Dol}$ and $M_{\rm dR}$ that correspond to the connected component $P_\alpha$ of the fixed point set. Details on this related to the stratifications of moduli spaces given by $\mathbb{C}^*$-action will be described in the next section.

This result generalizes Theorem~\ref{thm3} since the limiting point of any element in the Hitchin section is stable.

\section{Stratification of moduli spaces}

Throughout this section, $X$ denotes a compact Riemann surface.

Following the description in Section~\ref{sec3.1}, the natural $\mathbb{C}^*$-action on $M_{\rm Dol}(X,r)$ extends to an action of $\mathbb{C}^*$ on $M_{\rm Hod}(X,r)$. By a work of Simpson \cite{CS8}, for each $\big[E,\bar{\partial}_E,D^\lambda\big]\in M_{\rm Hod}(X,r)$, the limit $\lim\limits_{t\to0}t\cdot\big[E,\bar{\partial}_E,D^\lambda\big]$ exists and as a fixed point lies in $P$. Therefore, this action gives a~Bia{\l}ynicki-Birula stratification of~$M_{\rm Hod}$ into locally closed subsets (see Section~\ref{sec4.2}). Restricting the stratification to the fiber over~$0$, it recovers the classical Bia{\l}ynicki-Birula stratification of~$M_{\rm Dol}$. Restricting the stratification to the fiber over~$1$, we will have a stratification of~$M_{\rm dR}$ into locally closed subsets, and moreover, the space of opers, appears as a special stratum. This stratification is new to us, provides a new direction on the study of $M_{\rm dR}$.

On the other hand, in the same paper, Simpson showed each flat bundle over $X$ admits a~filtration satisfies the Griffiths transversality and such that the induced graded Higgs bundle is semistable (see Section \ref{sec4.1}). And moreover, the limit of the $\mathbb{C}^*$-action on that flat bundle, as a fixed point in $M_{\rm Dol}$, is S-equivalent to the graded Higgs bundle. This property provides a~possibility on the description of certain flat bundles (e.g., Theorem~\ref{thm4.10}), and moreover, plays an important role on the study of~$M_{\rm dR}$ (e.g., on the proof of Conjecture~\ref{nest} for rank~2).

This section can be treated as an application of non-Abelian Hodge theory to the study of the de Rham moduli space $M_{\rm dR}$. In this section, we will describe the stratifications of moduli spaces given by $\mathbb{C}^*$-action. Meanwhile, some conjectures related to the study of~$M_{\rm dR}$ will be introduced.

\subsection{Simpson filtrations on flat bundles}\label{sec4.1}

For the $\mathbb{C}^*$-action on the Dolbeault moduli space $M_{\rm Dol}(X,r)$. Since the Hitchin map $h\colon \! M_{\rm Dol}(X,r)\!\!\allowbreak \to\bigoplus\limits_{i=1}^rH^0\big(X,K_X^i\big)$ is proper and $\mathbb{C}^*$-equivariant, for any $\big[E,\bar{\partial}_E,\theta\big]\in M_{\rm Dol}(X,r)$, the limit $\lim\limits_{t\to0}t\cdot\big[E,\bar{\partial}_E,\theta\big]$ exists and as a fixed point of this action.

There is no analogue of the Hitchin map for $M_{\rm Hod}(X,r)$. However, for each $\big[E,\bar{\partial}_E,D^\lambda,\lambda\big]\in M_{\rm Hod}(X,r)$, the limit $\lim\limits_{t\to0}t\cdot\big[E,\bar{\partial}_E,D^\lambda,\lambda\big]$ of the $\mathbb{C}^*$-action still exists as a fixed point and lies in some~$P_\alpha$~\cite{CS8}. In particular, the limit $\lim\limits_{t\to0}t\cdot(E,\nabla)$ of a flat bundle~$(E,\nabla)$ exists as a fixed point. Moreover, this limit can be described by the existence of a special filtration of this flat bundle. This filtration, is found by Simpson, we will call it a~{\em Simpson filtration} throughout the whole paper.

\begin{Definition}[\cite{H,CS8}]
Let $E$ be a vector bundle over $X$ with flat connection $\nabla\colon E\rightarrow E\otimes_{\mathcal{O}_X}\Omega^1_X$. A decreasing filtration~$F^\bullet$ of~$E$ by strict subbundles
\begin{gather*}
E=F^0\supset F^1\supset\cdots\supset F^k=0
\end{gather*}
is called a \emph{Simpson filtration} if it satisfies the following two conditions:
\begin{enumerate}\itemsep=0pt
 \item[(1)] Griffiths transversality: $\nabla\colon F^p\rightarrow F^{p-1}\otimes_{\mathcal{O}_X}\Omega^1_X$ for $p=1,\dots,k$;
 \item[(2)] graded-semistability: the associated graded Higgs bundle $(\mathrm{Gr}_F(E),\mathrm{Gr}_F(\nabla))$, where \linebreak $\mathrm{Gr}_F(E) =\bigoplus\limits_{p=0}^{k-1}E^p$ with $E^p=F^p/F^{p+1}$ and $\mathrm{Gr}_F(\nabla)\colon E^p\rightarrow E^{p-1}\otimes_{\mathcal{O}_X}\Omega^1_X$ induced from~$\nabla$, is a~semistable Higgs bundle.
\end{enumerate}
Such a triple $\big(E,\nabla,F^\bullet\big)$ is called a {\em partial oper}.
\end{Definition}

Simpson proved the following nice theorem~\cite{CS8}.

\begin{Theorem}\label{thm4.3}Let $(E,\nabla)$ be a flat bundle over a smooth projective curve $X$.
\begin{enumerate}\itemsep=0pt
\item[$(1)$] There exist Simpson filtrations $F^\bullet$ on $(E,\nabla)$, this means any flat bundle has partial oper structure.
\item[$(2)$] Let $F_1^\bullet$ and $F_2^\bullet$ be two Simpson filtrations on $(E,\nabla)$, then the associated graded Higgs bundles $(\mathrm{Gr}_{F_1}(E),\mathrm{Gr}_{F_1}(\nabla))$ and $(\mathrm{Gr}_{F_2}(E),\mathrm{Gr}_{F_2}(\nabla))$ are $S$-equivalent.
\item[$(3)$] $(E,\nabla,F^\bullet)$ is graded-stable if and only if the Simpson filtration is unique $($up to indices translation$)$.
\item[$(4)$] $\lim\limits_{t\rightarrow 0}t\cdot(E,\nabla)=[\mathrm{Gr}_F(E),\mathrm{Gr}_F(\nabla)]\in M_{\rm Dol}(X,r)$.
\end{enumerate}
\end{Theorem}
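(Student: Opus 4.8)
The plan is to base everything on the \emph{Rees construction}, which converts a Griffiths-transversal filtration into a $\mathbb{C}^*$-equivariant degeneration, and then to read off all four assertions from the geometry of the $\mathbb{C}^*$-action. First I would set up this bridge. Given any decreasing filtration $F^\bullet$ by strict subbundles satisfying Griffiths transversality, form the Rees sheaf
\[
\mathcal{R}:=\sum_{p}\lambda^{-p}F^p\subset E\otimes_{\mathbb{C}}\mathbb{C}\big[\lambda,\lambda^{-1}\big],
\]
with the convention $F^p=E$ for $p\le 0$ and $F^p=0$ for $p\ge k$. A direct check shows $\mathcal{R}$ is a locally free $\mathcal{O}_{X\times\mathbb{A}^1}$-module whose fibre at $\lambda=t\neq 0$ is $E$ and whose fibre at $\lambda=0$ is $\mathrm{Gr}_F(E)$, the latter because the degree-$(-p)$ piece of $\mathcal{R}/\lambda\mathcal{R}$ is $F^p/F^{p+1}=E^p$. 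Griffiths transversality $\nabla(F^p)\subset F^{p-1}\otimes_{\mathcal{O}_X}\Omega^1_X$ is precisely the condition that $\lambda\nabla$ preserve $\mathcal{R}$ and define a relative flat $\lambda$-connection; its specialization at $\lambda=t\neq 0$ is the $t$-connection $t\nabla$, while at $\lambda=0$ the $\lambda$-twist drops out and one obtains exactly the Higgs field $\mathrm{Gr}_F(\nabla)$. Comparing with the action $t\cdot[E,\bar\partial_E,D^\lambda,\lambda]=[E,\bar\partial_E,tD^\lambda,t\lambda]$, the fibre of $\mathcal{R}$ over $\lambda=t$ \emph{is} the orbit point $t\cdot(E,\nabla)$, so $\mathcal{R}$ realizes the orbit closure by an honest family of bundles-with-$\lambda$-connection.

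For existence (1) and for (4) I would run this dictionary in reverse. By the result quoted just before the theorem, the orbit map $t\mapsto t\cdot(E,\nabla)$ extends over $t=0$ with limit a fixed point, i.e.\ a system of Hodge bundles $\bigoplus_p E^p$. Upgrading this to a genuine $\mathbb{C}^*$-equivariant family over $\mathbb{A}^1$ degenerating $(E,\nabla)$ to that fixed point, I would recover the filtration as the \emph{Rees filtration} of the family, namely $F^p\subset E$ cut out by the weight-$\ge p$ part of the degeneration. The grading of the limiting fixed point forces Griffiths transversality, and graded-semistability is automatic since the limit is a point of the moduli space $M_{\rm Dol}(X,r)$ of semistable objects; this establishes (1). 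For (4), the $\lambda=0$ fibre of the Rees family attached to this $F^\bullet$ is $(\mathrm{Gr}_F(E),\mathrm{Gr}_F(\nabla))$, and because $\mathcal{R}$ realizes the orbit closure its special fibre represents $\lim_{t\to 0}t\cdot(E,\nabla)$; hence $\lim_{t\to 0}t\cdot(E,\nabla)=[\mathrm{Gr}_F(E),\mathrm{Gr}_F(\nabla)]$ in $M_{\rm Dol}(X,r)$.

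Assertion (2) is then immediate: two Simpson filtrations $F_1^\bullet,F_2^\bullet$ have gradeds both representing the single point $\lim_{t\to 0}t\cdot(E,\nabla)\in M_{\rm Dol}(X,r)$ by (4), and points of $M_{\rm Dol}$ are by definition $S$-equivalence classes, so $\mathrm{Gr}_{F_1}$ and $\mathrm{Gr}_{F_2}$ are $S$-equivalent. For (3) I would argue both directions through stability. If the graded is stable it is simple and its $S$-equivalence class is a single isomorphism class; since the moduli problem is rigid at a stable fixed point, the degeneration of $(E,\nabla)$ to it is unique, whence the associated Rees filtration, and therefore the Simpson filtration, is unique up to the index shift $p\mapsto p+c$ (which is exactly reparametrization by $\mathbb{C}^*$). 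Conversely, if the graded is strictly semistable it is $S$-equivalent to a polystable Higgs bundle with at least two Jordan--H\"older factors of the same slope; the extra endomorphisms and subobjects then let one modify the degenerating family and produce genuinely distinct Simpson filtrations, so uniqueness fails.

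The main obstacle is the reverse step underlying (1) and (4): the quoted existence of the limit only furnishes a point of the \emph{coarse} moduli space $M_{\rm Hod}(X,r)$, which carries no universal family, whereas the filtration must live on the bundle $E$ itself. Bridging this gap, that is, producing an actual $\mathbb{C}^*$-equivariant family of bundles-with-$\lambda$-connection realizing the limit so that the Rees construction can be inverted, is where the real work lies. I expect to resolve it either by Simpson's iterative construction of the filtration directly on $(E,\nabla)$, or by a valuative-criterion and semistable-reduction argument on the stable locus followed by descent of the weight filtration from the resulting family.
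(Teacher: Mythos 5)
Your forward dictionary (the Rees construction: Griffiths-transversal filtration $\Rightarrow$ $\mathbb{C}^*$-equivariant family over $\mathbb{A}^1$ with generic fibre $t\cdot(E,\nabla)$ and special fibre the graded Higgs bundle) is sound, and it does correctly deliver (4) and then (2) \emph{once a Simpson filtration is known to exist}: graded-semistability makes the special fibre a point of $M_{\rm Dol}(X,r)$, separatedness of the coarse moduli space identifies the limit with $[\mathrm{Gr}_F(E),\mathrm{Gr}_F(\nabla)]$, and $S$-equivalence of two gradeds follows because closed points of $M_{\rm Dol}$ are $S$-equivalence classes. This matches how the Rees module is used in the reference the paper cites.

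The genuine gap is the existence step (1), which is the heart of the theorem, and your route to it is circular. The fact "the limit $\lim_{t\to0}t\cdot(E,\nabla)$ exists as a fixed point," which the paper quotes just before the theorem, is itself a \emph{consequence} of the existence of the Simpson filtration in Simpson's work; as the paper emphasizes, there is no analogue of the Hitchin proper map on $M_{\rm Hod}(X,r)$, so no a priori properness hands you the limit, and you therefore cannot start from the limit to manufacture the filtration. Even granting the limit, the "upgrade to a genuine equivariant family" cannot be carried out as described: $M_{\rm Hod}$ is a coarse moduli space with no universal family, a map $\mathbb{A}^1\to M_{\rm Hod}$ need not lift to a family of $\lambda$-flat bundles, and at strictly semistable points the $S$-equivalence ambiguity destroys the identification of the fibres over $\mathbb{C}^*$ with the fixed bundle $E$, so the "Rees filtration of the degeneration" is not defined on $E$ itself. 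You flag this obstacle yourself and defer to "Simpson's iterative construction" --- but that iteration \emph{is} the paper's entire proof of (1): start with the trivial filtration; if the graded $(V,\theta)$ is unstable, take its maximal destabilizing subsheaf $H$, which, being unique, is $\mathbb{C}^*$-fixed and hence a sub-system of Hodge bundles $H=\bigoplus H^p$; replace $F^\bullet$ by $G^p=\operatorname{Ker}\big(E\to (E/F^p)/H^{p-1}\big)$, which again satisfies Griffiths transversality; and prove termination via three bounded invariants strictly decreasing in lexicographic order. None of this is carried out in your proposal. Relatedly, (3) is under-argued in both directions: uniqueness of the limit point does not by itself force uniqueness of the filtration, since distinct filtrations may have isomorphic (or merely $S$-equivalent) gradeds --- that is exactly what (2) permits --- and for the converse you must actually exhibit a second filtration whenever the graded is strictly semistable, as in the paper's rank-$2$ example (the trivial filtration versus the two-step filtration through the degree-$0$ line subbundle); "modify the degenerating family" is not yet an argument.
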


In \cite{CS8}, Simpson gave a wonderful iterated process to show the existence of Simpson filtration for $(E,\nabla)$. We now sketch how it works.

Suppose $(E,\nabla)$ admits a filtration
\begin{gather*}
F^\bullet\colon \ 0\subset F^{k-1}\subset\cdots\subset F^0=E
\end{gather*}
that satisfies the Griffiths transversality $\nabla(F^p)\subset F^{p-1}\otimes\Omega_X^1$, and such that the associated Higgs bundle $(V,\theta):=({\rm Gr}_F(E),{\rm Gr}_F(\nabla))$ is not semistable. To see the existence of such filtration, we can begin with the trivial filtration $0\subset F^0=E$, the graded Higgs bundle will be $({\rm Gr}_F(E),{\rm Gr}_F(\nabla))=(E,0)$. Then applying the following iteration process, we can always have a such filtration. Take $H\subset(V,\theta)$ to be the maximal destabilizing subsheaf, which is known being unique and a subbundle of $V$, and the quotient $V/H$ is also a subbundle of $E$. Since $H$ is unique, it must be a fixed point of the $\mathbb{C}^*$-action. Therefore, $H$ has a structure of system of Hodge bundles, and as a sub-system of Hodge bundles of $(V,\theta)$, that is, $H=\bigoplus H^p$ with each $H^p=H\cap {\rm Gr}_F^p(E)\subset F^p(E)/F^{p+1}(E)$ being a strict subbundle.

The new filtration $G^\bullet$ is defined as
\begin{gather*}
G^p:=\operatorname{Ker}\left(E\to\frac{E/F^p(E)}{H^{p-1}}\right).
\end{gather*}
It satisfies the Griffiths traversality since $\theta(H^p)\subset H^{p-1}\otimes\Omega_X^1$, and it fits into the exact sequence
\begin{gather*}
0\to {\rm Gr}_F^p(E)/H^p\to{\rm Gr}_G^p(E)\to H^{p-1}\to0.
\end{gather*}
If the new resulting graded Higgs bundle $({\rm Gr}_G(E),{\rm Gr}_G(\nabla))$ is still not semistable, then we continue this process to obtain a new graded Higgs bundle. By introducing three bounded invariants, Simpson showed that the iteration process will strictly decrease these invariants in lexicographic order. Therefore, after a finite step, we will find a filtration such that the associated graded Higgs bundle is semistable.

\begin{Corollary}[\cite{CS8}]\label{lem4.4}Let $X$ be a smooth projective curve and let $\big[E,\bar{\partial}_E,D^\lambda\big]\in M_{\rm Hod}(X,r)$ be any $\lambda$-flat bundle $(\lambda\neq0)$ in the moduli space, then
\begin{gather*}
\lim_{t\to0}t\cdot\big[E,\bar{\partial}_E,D^\lambda\big]=\lim_{t\to0}t\cdot \big[E,\bar{\partial}_E,\lambda^{-1}D^\lambda\big]\in M_{\rm Dol}(X,r),
\end{gather*}
where $\big(E,\bar{\partial}_E,\lambda^{-\lambda}D^\lambda\big)$ is the flat bundle $(1$-bundle$)$ associated to $\big(E,\bar{\partial}_E,D^\lambda\big)$.
\end{Corollary}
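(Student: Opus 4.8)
The plan is to exploit the explicit formula for the $\mathbb{C}^*$-action on $M_{\rm Hod}(X,r)$, namely $t\cdot\big[E,\bar{\partial}_E,D^\lambda,\lambda\big]=\big[E,\bar{\partial}_E,tD^\lambda,t\lambda\big]$, and to observe that the two orbits appearing in the statement are the \emph{same} curve in $M_{\rm Hod}(X,r)$ up to a linear reparametrization of the $\mathbb{C}^*$-parameter. First I would record that both limits exist: by the discussion preceding Theorem~\ref{thm4.3} (Simpson's result), the $\mathbb{C}^*$-orbit of any point of $M_{\rm Hod}(X,r)$ has a well-defined limit as $t\to0$ lying in the fixed-point set $P\subset M_{\rm Dol}(X,r)$, and this applies both to the $\lambda$-flat bundle $\big[E,\bar{\partial}_E,D^\lambda\big]$ and to its associated flat bundle.

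The first substantive step is to check that $\nabla:=\lambda^{-1}D^\lambda$ is genuinely a holomorphic flat connection when $\lambda\neq0$, so that the right-hand side is well-posed. This is immediate from the $\lambda$-twisted Leibniz rule in the holomorphic category: dividing by $\lambda$ converts $D^\lambda(fs)=fD^\lambda s+\lambda s\otimes df$ into the ordinary Leibniz rule $\nabla(fs)=f\nabla s+s\otimes df$, while flatness is preserved since $\nabla^2=\lambda^{-2}\big(D^\lambda\big)^2=0$. Hence $\big(E,\bar{\partial}_E,\lambda^{-1}D^\lambda\big)$ is the flat bundle sitting over $1\in\mathbb{C}$, i.e., the point $\big[E,\bar{\partial}_E,\lambda^{-1}D^\lambda,1\big]$ of $M_{\rm Hod}(X,r)$.

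The heart of the argument is then the identity of orbits. Writing $\gamma_1(t)=t\cdot\big[E,\bar{\partial}_E,D^\lambda,\lambda\big]=\big[E,\bar{\partial}_E,tD^\lambda,t\lambda\big]$ and $\gamma_2(t)=t\cdot\big[E,\bar{\partial}_E,\lambda^{-1}D^\lambda,1\big]=\big[E,\bar{\partial}_E,t\lambda^{-1}D^\lambda,t\big]$, a direct substitution gives
\[
\gamma_2(\lambda t)=\big[E,\bar{\partial}_E,\lambda t\cdot\lambda^{-1}D^\lambda,\lambda t\big]=\big[E,\bar{\partial}_E,tD^\lambda,t\lambda\big]=\gamma_1(t)
\]
for every $t\in\mathbb{C}^*$. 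Thus the two orbits trace out the same curve, differing only by the rescaling $t\mapsto\lambda t$ of the parameter. Since $\lambda\neq0$, we have $t\to0$ if and only if $\lambda t\to0$, so $\lim_{t\to0}\gamma_1(t)=\lim_{t\to0}\gamma_2(\lambda t)=\lim_{s\to0}\gamma_2(s)$, which is precisely the asserted equality, both sides landing in $\pi^{-1}(0)=M_{\rm Dol}(X,r)$.

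I do not expect a genuine analytic obstacle here: the existence of both limits is already supplied by Simpson's theorem, and the remaining content is the bookkeeping identity above. The only point deserving care is the compatibility of the reparametrization with the equivalence relation defining points of $M_{\rm Hod}(X,r)$, that is, confirming that $\gamma_1$ and $\gamma_2$ coincide as orbits in the moduli space and not merely on representative quadruples; but since the $\mathbb{C}^*$-action is defined directly on $M_{\rm Hod}(X,r)$, this is automatic.
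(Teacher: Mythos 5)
Your proof is correct and matches the argument the paper intends: the corollary is stated without a separate proof precisely because, as you observe, $t\cdot\big[E,\bar{\partial}_E,D^\lambda,\lambda\big]=\big[E,\bar{\partial}_E,tD^\lambda,t\lambda\big]$ and $(\lambda t)\cdot\big[E,\bar{\partial}_E,\lambda^{-1}D^\lambda,1\big]$ are literally the same point of $M_{\rm Hod}(X,r)$, so the two orbits coincide up to the reparametrization $t\mapsto\lambda t$ and share their $t\to0$ limit, whose existence is guaranteed by Simpson's result quoted before Theorem~\ref{thm4.3} (equivalently, by the rescaling isomorphism~\eqref{5.3}). Your preliminary check that $\lambda^{-1}D^\lambda$ is an honest flat connection is the same normalization the paper uses throughout, so nothing further is needed.
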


By definition, the filtration for a $\mathrm{GL}(r,\mathbb{C})$-oper is a special Simpson filtration.

\begin{Corollary}
Every oper $(E,\nabla,F^\bullet)$ over a smooth projective curve $X$ of $g\geq2$ is graded stable, in particular, $(E,\nabla)$ has $F^\bullet$ as the only Simpson filtration $($up to indices translation$)$.
\end{Corollary}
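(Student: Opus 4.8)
The plan is to prove the stronger statement that the graded Higgs bundle $(\mathrm{Gr}_F(E),\mathrm{Gr}_F(\nabla))$ attached to an oper is in fact \emph{stable}, and then to invoke Theorem~\ref{thm4.3}(3). An oper's filtration $F^\bullet$ satisfies Griffiths transversality by definition, so once its associated graded is known to be semistable it qualifies as a Simpson filtration; graded-stability is then precisely the hypothesis under which Theorem~\ref{thm4.3}(3) forces $F^\bullet$ to be the unique Simpson filtration up to indices translation. Thus the entire corollary reduces to the stability of the graded Higgs bundle, and I would spend all the work there.

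First I would exploit the defining isomorphism condition of an oper. Writing $E^i=F^i/F^{i+1}$ for the graded line bundles and $\theta\colon E^i\to E^{i-1}\otimes_{\mathcal{O}_X}K_X$ for the induced maps, the hypothesis that each $\theta$ is an isomorphism yields $E^i\cong E^{i-1}\otimes K_X$, hence $\deg E^i=\deg E^0+i(2g-2)$. So the degrees of the graded pieces grow strictly and linearly in $i$, with gap $\deg K_X=2g-2>0$ exactly because $g\geq2$; this positivity is what will ultimately produce the strict slope inequality.

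The key step is to classify the $\theta$-invariant saturated subsheaves of $V:=\mathrm{Gr}_F(E)=\bigoplus_{i=0}^{r-1}E^i$; on a smooth curve these are precisely the $\theta$-invariant subbundles, which is exactly the class of subsheaves appearing in the stability condition of Definition~\ref{def2.2}. Here I would run a cyclic-vector argument: given a nonzero $\theta$-invariant subbundle $W$, let $j$ be the largest index with nonzero projection $\pi_j(W)\neq0$, and choose a local section $v$ of $W$ whose $E^j$-component does not vanish. Since $\theta$ restricts to the \emph{isomorphism} $E^j\cong E^{j-1}\otimes K_X$, the iterates $v,\theta v,\dots,\theta^j v$ (sections of $W$ up to twisting by powers of $K_X$) have strictly descending top indices $j,j-1,\dots,0$ with nonvanishing leading components; evaluating at a general point $x$ they are linearly independent in the fiber $V_x$, so $\operatorname{rk}(W)\geq j+1$. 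On the other hand $W\subseteq\bigoplus_{i=0}^{j}E^i$, a bundle of rank $j+1$, whence $\operatorname{rk}(W)\leq j+1$. Therefore $\operatorname{rk}(W)=j+1$ and $W=\bigoplus_{i=0}^{j}E^i=:V_{\leq j}$, so the only proper $\theta$-invariant subbundles are the standard subbundles $V_{\leq k}$.

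Finally I would compute slopes: $\mu(V_{\leq k})=\deg E^0+(2g-2)\frac{k}{2}$ while $\mu(V)=\deg E^0+(2g-2)\frac{r-1}{2}$, so every proper $V_{\leq k}$ with $k<r-1$ satisfies $\mu(V_{\leq k})<\mu(V)$, again by $2g-2>0$. This gives stability, hence graded-stability of the oper filtration, and Theorem~\ref{thm4.3}(3) then closes the argument. I expect the cyclic-vector classification to be the main obstacle: one must argue carefully that $\theta$-invariance together with the isomorphism property rigidly pins every invariant subbundle to the standard flag $V_{\leq k}$, and that it genuinely suffices to test the subbundle condition of Definition~\ref{def2.2} rather than arbitrary invariant coherent subsheaves.
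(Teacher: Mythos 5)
Your proposal is correct and takes essentially the same route as the paper's proof: compute $\deg E^i=\deg E^0+i(2g-2)$ from the isomorphisms $\theta|_{E^i}$, observe that the proper $\theta$-invariant subbundles of $\bigoplus_{i=0}^{r-1}E^i$ are exactly the sub-flags $\bigoplus_{i=0}^{k}E^i$ with slope $\deg E^0+k(g-1)<\deg E^0+(r-1)(g-1)=\mu(E)$, conclude graded stability, and invoke Theorem~\ref{thm4.3}(3) for uniqueness of the Simpson filtration. The only difference is that you actually justify the classification of invariant subbundles via a cyclic-vector argument (together with the standard reduction to saturated subsheaves), a step the paper asserts without proof.
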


\begin{proof}Let $\big(\bigoplus\limits_{i=0}^{r-1}E^i,\theta\big)$ be the associated graded Higgs bundle, where each $E^i$ is a line bundle and each $\theta|_{E^i}\colon E^i\to E^{i-1}\otimes\Omega_X^1$ is an isomorphism. This means
\begin{gather*}
\deg\big(E^i\big)=\deg\big(E^{i-1}\big)+2g-2=\cdots=\deg\big(E^0\big)+i(2g-2),
\end{gather*}
note that each $\theta$-invariant non-zero proper subbundle of $\big(\bigoplus\limits_{i=0}^{r-1}E^i,\theta\big)$ has the form $\bigoplus\limits_{i=0}^kE^i$ ($0\leq k<r-1$), with
\begin{gather*}
\deg\left(\bigoplus_{i=0}^kE^k\right)=(k+1)\deg\big(E^0\big)+k(k+1)(g-1),
\end{gather*}
so
\begin{gather*}
\mu\left(\bigoplus_{i=1}^kE^i\right)=\deg\big(E^0\big)+k(g-1)<\deg\big(E^0\big)+(r-1)(g-1)=\mu(E),
\end{gather*}
this means $(E,\nabla,F^\bullet)$ is graded stable. In particular, by~(3) of Theorem \ref{thm4.3}, $F^\bullet$ is the only Simpson filtration for~$(E,\nabla)$.
\end{proof}

\begin{Remark}The non-uniqueness of the Simpson filtration is easy to see. In fact, any irreducible rank 2 flat bundle of degree 0 with the underlying vector bundle strictly semistable which is an extension of a degree 0 line bundle admits more than one Simpson filtration. One is the trivial filtration, and the other one has two terms with the first term the extension of line bundle. The two resulting graded Higgs bundles are automatically semistable and $S$-equivalent to each other, as a unique representative point in the Dolbeault moduli space, which parametrizes the limit point of the $\mathbb{C}^*$-action.
\end{Remark}

\subsection{Stratifications of moduli spaces}\label{sec4.2}

Following \cite{CS8}, we introduce the following set:
\begin{gather*}
G_\alpha:=\big\{ \big[E,\bar{\partial}_E,D^\lambda,\lambda\big]\in M_{\rm Hod}(X,r) \, \big|\, \lim_{t\to0}t\cdot \big[E,\bar{\partial}_E,D^\lambda,\lambda\big]\in P_\alpha\big\},
\end{gather*}
then these $G_\alpha$ gives a Bia{\l}ynicki-Birula type stratification of the Hodge moduli space $M_{\rm Hod}(X,r)$
\begin{gather*}
M_{\rm Hod}(X,r)=\bigcup_\alpha G_\alpha
\end{gather*}
into locally closed subsets. There is a natural projection $p_\alpha\colon G_\alpha\to P_\alpha$ by taking the limit of the~$\mathbb{C}^*$-action. Restricting the stratification to the fiber over each $\lambda\in\mathbb{C}$ gives the stratification of $M_{{\rm Hod}}^\lambda(X,r)(X,r)$
\begin{gather*}
M_{{\rm Hod}}^\lambda(X,r)(X,r)=\bigcup_\alpha G_\alpha^\lambda:=\bigcup_\alpha\Big(G_\alpha\bigcap\pi^{-1}(\lambda)\Big)
\end{gather*}
into locally closed subsets. In particular, taking $\lambda=0$ and $1$, we have the stratifications of $M_{\rm Dol}(X,r)$ and $M_{\rm dR}(X,r)$
\begin{gather*}
M_{\rm Dol}(X,r)=\bigcup_\alpha G_\alpha^0, \qquad
M_{\rm dR}(X,r)=\bigcup_\alpha G_\alpha^1
\end{gather*}
into locally closed subsets. The first one is in fact the Bia{\l}ynicki-Birula stratification of \linebreak $M_{\rm Dol}(X,r)$ given by the $\mathbb{C}^*$-action. The second one is called the {\em oper stratification} of $M_{\rm dR}(X,r)$, since the space of opers appears as a special stratum (Theorem~\ref{thm4.3} and Corollary~\ref{lem4.4}). The projection~$p_\alpha$ restricts on fibers gives projections $p_\alpha^0\colon G_\alpha^0\to P_\alpha$ and $p_\alpha^1\colon G_\alpha^1\to P_\alpha$. By Bia{\l}ynicki-Birula theo\-ry, over each point $u\in P_\alpha$, the fiber $G_\alpha^0(u):=\big(p_\alpha^0\big)^{-1}(u)$ is an affine space. Moreover, in~\cite{CW}, by applying the conformal limit techniques, the authors showed each $G_\alpha^1(u):=\big(p_\alpha^1\big)^{-1}(u)$ is also affine \cite[Corollary~1.5]{CW}.

In \cite{CS8}, Simpson showed that over any point $u$ which is a stable system of Hodge bundles in the fixed point set, the fiber~$G_\alpha^1(u)$ is a Lagrangian submanifold of~$M_{\rm dR}$. In~\cite{CW}, the authors showed the Lagrangian property also holds for $M_{\rm Dol}(X,r)$, that is, for each stable~$u$, the fiber $G_\alpha^0(u)$ is a Lagrangian submanifold of~$M_{\rm Dol}$. Moreover, by studying the conformal limits, they showed each fiber $G_\alpha^0(u)$ is in fact biholomorphic to $G_\alpha^1(u)$ (see Theorem \ref{CW}), which generalizes the conformal limit correspondence between Hitchin section and the space of opers. As we have seen in the last section that the locus ${\rm NAH}(s_h(B))$ intersects with ${\rm Op}(r)$ transversely at $s_h(0,\dots,0)$, the authors of \cite{CW} showed that under the non-Abelian Hodge correspondence, each fiber of the Bia{\l}ynicki-Birula stratification and intersects with the fiber of the oper stratification transversely at the base point. More explicitly, for each stable point $u\in P_\alpha$, the image ${\rm NAH}\big(G_\alpha^0(u)\big)$ of the fiber~$G_\alpha^0(u)$ under non-Abelian Hodge correspondence intersects with the fiber~$G_\alpha^1(u)$ transversely at~${\rm NAH}(u)$.

The non-Abelian Hodge correspondence shares no light on the study of~$M_{\rm dR}$ from the study of~$M_{\rm Dol}$, since~$M_{\rm dR}$ and~$M_{\rm Dol}$ share very few similarities as algebraic spaces. However, the strata~$G_\alpha^1$ play an important role on the understanding of~$M_{\rm dR}$. Especially the Lagrangian property for each fiber of $G_\alpha^1$ induces a natural question on the relationship between these Lagrangian fibers:

\begin{Conjecture}[foliation conjecture, \cite{CS8}] When varying~$\alpha$, these Lagrangian fibers of $p_\alpha^1\colon G_\alpha^1\allowbreak \to P_\alpha$ fit together to provide a smooth foliation of $M_{\rm dR}(X,r)$ with each leaf closed.
\end{Conjecture}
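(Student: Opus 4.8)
The plan is to realize the family of Lagrangian fibers as the integral manifolds of a single smooth distribution, apply the Frobenius theorem to produce the foliation charts, and treat the closedness of the leaves as a separate matter. Write $\omega_{\rm dR}$ for the holomorphic symplectic form on $M_{\rm dR}(X,r)$ and set $2d=\dim_{\mathbb C}M_{\rm dR}(X,r)$. By part~(4) of Theorem~\ref{thm4.3} every flat bundle has a well-defined limit $\lim_{t\to0}t\cdot(E,\nabla)=u\in P=\coprod_\alpha P_\alpha$, so the fibers $G_\alpha^1(u)$ already partition $M_{\rm dR}(X,r)$, and over each stable~$u$ they are smooth Lagrangian submanifolds by the results quoted from \cite{CS8} and \cite{CW}; in particular they all have the same dimension~$d$. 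The first step is to define the candidate tangent distribution
\begin{gather*}
\mathcal F_{(E,\nabla)}:=T_{(E,\nabla)}G_\alpha^1(u),\qquad u=\lim_{t\to0}t\cdot(E,\nabla),
\end{gather*}
and to prove that it is a smooth subbundle of $TM_{\rm dR}(X,r)$ of constant rank~$d$.

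For the second step I would compute $\mathcal F_{(E,\nabla)}$ through deformation theory, identifying it with the subspace of the hypercohomology $\mathbb H^1$ of the deformation complex of $(E,\nabla)$ consisting of those first-order deformations that fix both the stratum and the limit point~$u$; equivalently, $\mathcal F$ is the kernel of the differential of the limit map $p\colon M_{\rm dR}(X,r)\to P$ on each stratum. The Lagrangian property (Theorem~\ref{CW}) already shows that $\mathcal F$ is isotropic for $\omega_{\rm dR}$ of the maximal dimension~$d$, so it suffices to prove that its rank does not jump. Here I would use the Bia{\l}ynicki-Birula description of $M_{\rm Hod}(X,r)$ from \cite{CS8}: the projection $p_\alpha\colon G_\alpha\to P_\alpha$ is an affine-space bundle over the smooth locus, and restricting to $\lambda=1$ the conformal-limit isomorphism $G_\alpha^0(u)\cong G_\alpha^1(u)$ of Theorem~\ref{CW} transports the smooth affine structure of the Dolbeault fibers to the de~Rham fibers, which yields smoothness of $\mathcal F$ within each single stratum.

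Granting constant rank, involutivity is then automatic, since through every point of $M_{\rm dR}(X,r)$ there already passes an integral manifold of $\mathcal F$ of dimension equal to its rank, namely the fiber $G_\alpha^1(u)$; the Frobenius theorem then supplies the local product charts and hence the smooth foliation. It remains to show that each leaf is closed. I would try to realize $G_\alpha^1(u)$ as an orbit of the solvable group arising from the Bia{\l}ynicki-Birula contraction and deduce closedness from that of such orbits, using the properness of the Hodge fibration $\pi\colon M_{\rm Hod}(X,r)\to\mathbb C$ to control escaping limits; the space of opers, already known to be a closed stratum, is the model case.

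The hard part will be the constant rank of $\mathcal F$ across the boundaries between strata, together with the closedness of the leaves, which is exactly where the difficulty of the conjecture lies. As $u$ moves from the open stratum, lying over the stable bundles with zero Higgs field, into a deeper stratum $G_\alpha^1$ of strictly smaller dimension, the Lagrangian fibers retain dimension~$d$ while the ambient stratum collapses, so $\mathcal F$ must be shown to extend smoothly across the stratification and the leaves over the deeper strata must be shown not to accumulate onto one another. Because $M_{\rm dR}(X,r)$ is singular along the non-stable locus and the limit map~$p$ is only a stratified map rather than a global submersion, the Frobenius theorem cannot be invoked globally until these transversality and smoothness problems are resolved; this, together with the ill-behaviour of~$p$ at strictly semistable points, where the Simpson filtration is non-unique (cf.\ the Remark above), is the essential obstacle.
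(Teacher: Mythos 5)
There is a fundamental mismatch here: the statement you are trying to prove is an \emph{open conjecture}. The paper offers no proof of it --- it explicitly records that the conjecture is still open, the only progress being the result of \cite{LSS} for rank~2 parabolic connections on $\mathbb{P}^1$ minus four points. Your proposal is accordingly a strategy outline rather than a proof, and you concede at the end that the two decisive steps --- constant rank of the distribution $\mathcal F$ across stratum boundaries and closedness of the leaves --- remain unresolved. Those are not finishing touches; they \emph{are} the content of the conjecture. Everything you do establish (fibers partition the space by Theorem~\ref{thm4.3}(4); each fiber over a stable fixed point is Lagrangian, hence of dimension $d$, by \cite{CS8} and Theorem~\ref{CW}; involutivity is automatic wherever $\mathcal F$ is a genuine smooth subbundle admitting the fibers as integral manifolds) is already in the cited literature, so no new ground is gained.

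Beyond deferring the hard part, the proposal contains concrete errors. First, the Lagrangian and smoothness statements for $G_\alpha^1(u)$ are established only over \emph{stable} points $u\in P_\alpha$; at strictly semistable points the Simpson filtration is non-unique (see the Remark following Corollary~\ref{lem4.4}), the moduli space is singular, and your distribution $\mathcal F$ is not even defined, so Frobenius cannot be invoked globally --- you acknowledge this but offer no mechanism to repair it. Second, your closedness argument appeals to ``the properness of the Hodge fibration $\pi\colon M_{\rm Hod}(X,r)\to\mathbb{C}$,'' but $\pi$ is not proper: its fibers $M_{\rm Dol}(X,r)$ and $M_{\rm dR}(X,r)$ are non-compact quasi-projective varieties (it is the Hitchin map $h$ that is proper), so there is no control of escaping limits from this source. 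Third, transporting structure through the conformal-limit biholomorphism $G_\alpha^0(u)\cong G_\alpha^1(u)$ of Theorem~\ref{CW} cannot yield closedness, because that identification is fiberwise and is not induced by any ambient isomorphism; indeed the paper proves that the Dolbeault analogue of closedness \emph{fails} --- any fiber $G_\alpha^0(u)$ contained in the nilpotent cone is not closed, as shown there via the lemma on $\mathbb{G}_m$-linearized ample line bundles. Closedness of the de~Rham leaves is therefore a phenomenon specific to $M_{\rm dR}(X,r)$, and a correct proof would have to exploit something the Dolbeault side lacks; your scheme, which works by importing Dolbeault and Bia{\l}ynicki-Birula structure, has no such input.
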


This conjecture is still open, one progress was recently made by the authors of~\cite{LSS} for the case of moduli space of rank~2 parabolic connections on~$\mathbb{P}^1$ minus~4 points.

This closedness property for $M_{\rm Dol}(X,r)$ is clearly not right, since any fiber contained in the compact nilpotent cone would be not closed. If fact, if it is closed, as a subset of a compact space, it is compact also, as it is affine, this couldn't happen.

We give a more explicit explanation here, define the following indexed sets by the limit of $\mathbb{C}^*$-action:
\begin{gather*}
D_\alpha^0:=\big\{ \big[E,\bar{\partial}_E,\theta\big]\in M_{\rm Dol}(X,r) \,| \, \lim_{t\to\infty}t\cdot\big[E,\bar{\partial}_E,\theta\big]\in P_\alpha\big\}.
\end{gather*}
Then by Hausel's thesis~\cite{TH}, these sets fit together into the nilpotent cone:
\begin{align}\label{2}
h^{-1}(0)=\bigcup_\alpha D_\alpha^0,
\end{align}
which is a deformation retraction of the whole moduli space. Let $u\in P_\alpha$ be a fixed point such that the whole fiber $G_\alpha^0(u)=\big\{[E,\bar{\partial}_E,\theta]\,|\, \lim\limits_{t\to0}t\cdot\big[E,\bar{\partial}_E,\theta\big]=u\big\}$ is contained in the nilpotent cone, that is, $G_\alpha^0(u)\subseteq h^{-1}(0)$. Take any $v\in G_\alpha^0(u)$ that is not a fixed point, as $t\cdot v\in G_\alpha^0(u)$ for all $t\in\mathbb{C}^*$ and $G_\alpha^0(u)$ is closed, both $\lim\limits_{t\to0}t\cdot v$ and $\lim\limits_{t\to\infty}t\cdot v$ lie in $G_\alpha^0(u)$. Note the first limit is the fixed point $u$, and the second limit is also a fixed point by~\eqref{2}. By definition, $G_\alpha^0(u)$ can contain only one fixed point, $u$, this means the two limits of a non-fixed point should coincide, this couldn't happen since the fixed point sets are ordered by the energy functional.

Following this idea, with a discussion with Simpson, he told me the following pure algebraic-geometric result:

\begin{Lemma}Let $\mathbb{G}_m$ the multiplicative group, and $Y$ be any algebraic variety. Suppose $\mathbb{G}_m$ acts on $Y$ with open dense orbit isomorphic to $\mathbb{G}_m$, and such that the two endpoints are identified. Then there does not exist an ample linearized line bundle on $Y$.
\end{Lemma}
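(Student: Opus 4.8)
The plan is to reduce the statement to the elementary theory of $\mathbb{G}_m$-linearized line bundles on $\mathbb{P}^1$, exploiting the hypothesis that the two endpoints of the open orbit are glued to a single point. Write $O\cong\mathbb{G}_m$ for the open dense orbit. Since $O$ is a single free orbit it contains no fixed point, so the complement $Y\setminus O$ is exactly the common endpoint, a fixed point $p$, and $Y=O\cup\{p\}$ is irreducible of dimension $1$. Fix $v\in O$; the orbit map $t\mapsto t\cdot v$ is an isomorphism of $\mathbb{G}_m$ onto $O$, and because both $\lim_{t\to0}t\cdot v$ and $\lim_{t\to\infty}t\cdot v$ exist in $Y$ and equal $p$, it extends to a $\mathbb{G}_m$-equivariant morphism $\phi\colon\mathbb{P}^1\to Y$ (standard action, fixed points $0,\infty$) with $\phi(0)=\phi(\infty)=p$. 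This $\phi$ is birational and surjective; in particular $Y=\phi(\mathbb{P}^1)$ is complete, and for every line bundle $L$ on $Y$ one has $\deg_Y L=\deg_{\mathbb{P}^1}\phi^*L$ by the projection formula, since $\phi_*[\mathbb{P}^1]=[Y]$.

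Next I would recall the weight formula on $\mathbb{P}^1$. For a $\mathbb{G}_m$-linearized line bundle $M$ on $\mathbb{P}^1$ with the standard action, let $w_0$ and $w_\infty$ denote the integer weights by which $\mathbb{G}_m$ acts on the fibers $M_0$ and $M_\infty$. Choosing a rational $\mathbb{G}_m$-eigensection $s$ of $M$, its divisor is $\mathbb{G}_m$-invariant, hence supported on the fixed locus $\{0,\infty\}$; comparing the weight of $s$ in equivariant local frames at the two fixed points, where the local coordinates carry opposite weights $\pm1$, yields $\deg M=w_\infty-w_0$, up to an overall sign convention that will be irrelevant below.

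The key point, and the place where the identification of the endpoints enters, is the computation of these two weights for $M=\phi^*L$. Let $L$ be a $\mathbb{G}_m$-linearized line bundle on $Y$ and let $w$ be the weight of the action on the fiber $L_p$. Since $\phi$ is equivariant and $\phi(0)=\phi(\infty)=p$, the fibers of $\phi^*L$ at $0$ and at $\infty$ are \emph{both} canonically identified with $L_p$, carrying the \emph{same} induced action; hence $w_0=w_\infty=w$. The weight formula then gives $\deg_{\mathbb{P}^1}\phi^*L=w_\infty-w_0=0$, and therefore $\deg_Y L=0$.

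Finally, on the complete curve $Y$ an ample line bundle must have strictly positive degree, contradicting $\deg_Y L=0$. Hence no ample $\mathbb{G}_m$-linearized line bundle can exist on $Y$. The only genuinely delicate step is the third paragraph: the entire argument hinges on the observation that gluing the two endpoints forces the fiber weights at the two fixed points of $\mathbb{P}^1$ to agree, which in turn forces the equivariant degree—and hence the ordinary degree—to vanish. Everything else is the standard dictionary between equivariant degrees and fixed-point weights, together with the fact that $\phi$ is a birational equivariant resolution of $Y$ by $\mathbb{P}^1$.
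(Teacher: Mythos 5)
Your proof is correct, and it reaches the contradiction by a genuinely different route than the paper's. The paper argues directly on $Y$ with sections: ampleness provides an invariant section of $L^{\otimes n}$ nonvanishing along the orbit, and the resulting Hilbert--Mumford-type weight computation shows the linearization must act with different weights (in the paper's normalization, positive versus negative) on the fibers at $\lim\limits_{t\to 0}t\cdot y$ and $\lim\limits_{t\to\infty}t\cdot y$; since these two limit points coincide, the single fiber $L_z$ would carry two distinct weights, which is absurd. You instead pass to the equivariant normalization $\phi\colon \mathbb{P}^1\to Y$ with $\phi(0)=\phi(\infty)=p$, note that the pullback linearization therefore has \emph{equal} fiber weights at the two fixed points of $\mathbb{P}^1$, and invoke the fixed-point weight formula $\deg_{\mathbb{P}^1}M=\pm(w_0-w_\infty)$ to get $\deg_{\mathbb{P}^1}\phi^*L=0$, hence $\deg_Y L=0$ by birationality of $\phi$, contradicting the positivity of the degree of an ample bundle on a complete curve. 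The two arguments share the same engine --- gluing the endpoints forces a single weight on $L_p$ where ampleness demands two --- but your implementation buys something: it uses no sections at all, makes explicit that $Y$ is a complete (nodal rational) curve, and proves the sharper statement that \emph{every} $\mathbb{G}_m$-linearized line bundle on such a $Y$ has degree zero; the paper's section-weight argument is shorter, fits the GIT framework of the surrounding discussion, and never needs to observe that $Y$ is a curve. Two small remarks: your claim that $Y\setminus O=\{p\}$ is properly a \emph{consequence} of constructing $\phi$ (its image is complete, hence closed in the separated $Y$, and contains the dense orbit $O$, so it is all of $Y$), so it should come after the extension of the orbit map rather than before; and your freeness assumption is harmless but unnecessary --- if the orbit has a finite stabilizer $\mu_n$ (it is still isomorphic to $\mathbb{G}_m$ as a variety), then $\phi$ has degree $n$ onto its image and the conclusion $\deg_Y L=0$ survives unchanged.
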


\begin{proof}Let $y$ be any point lies in the open dense orbit and let $z$ be an extra point not in the orbit, denote by $\mathcal{O}_y$ the orbit. Then by assumption, $\mathcal{O}_y$ and $\{z\}$ are the only two orbits of this action, and moreover, $\lim\limits_{t\to0}t\cdot y=\lim\limits_{t\to\infty}t\cdot y=z$. Suppose $Y$ has an ample $\mathbb{G}_m$-linearized line bundle $L$, then there exists an invariant section of $L^{\otimes n}$ for some $n\in\mathbb{Z}_+$. Hence the linear action of $\mathbb{G}_m$ on the fiber $L_{\lim\limits_{t\to0}t\cdot y}$ has positive weight, while it acts on the fiber $L_{\lim\limits_{t\to\infty}t\cdot y}$ has negative weight, which is a contradiction.
\end{proof}

With this lemma, we can see in another way why $G_\alpha^0(u)$ could not have two endpoints of the $\mathbb{C}^*$-action on a non-fixed point identified. Since the Dolbeault moduli space~$M_{\rm Dol}$ has an ample $\mathbb{G}_m$-linearized line bundle (see Simpson's construction of the moduli space~\cite{CS4,CS5}), it could not have such $G_\alpha^0(u)$ inside with the property. Therefore, $G_\alpha^0(u)$ could not have a~point with two endpoints of the $\mathbb{C}^*$-action identified, this means $G_\alpha^0(u)$ could not be closed.

Let $U(X,r)$ be the moduli space of semistable vector bundles over~$X$ of degree~0 and rank~$r$, this is known to be an irreducible variety. It naturally embeds into $M_{\rm Dol}(X,r)$ as Higgs bundles with zero Higgs field. Moreover, it appears as a connected component of the fixed point set~$P$, let~$P_0$ denotes this component. On the de Rham side, the corresponding stratum $G_0^1$ in $M_{\rm dR}$ is the unique open stratum that consists of flat bundles of the form $\big(E,\bar{\partial}_E,\partial+\varphi\big)$, where $\big(E,\bar{\partial}_E\big)$ is a polystable vector bundle, $\varphi\in H^0\big(X,\operatorname{End}(E)\otimes\Omega_X^1\big)$, and $\partial$ is the unique unitary flat connection. For each such flat bundle, the Simpson filtration is trivial. On the Dolbeault side, the corresponding stratum~$G_0^0$ is a~dense open subset of $M_{\rm Dol}(X,r)$, and can be identified with the cotangent bundle $T^*U(X,r)$. The strata $G_0^1$ and $G_0^0$ are usually called the {\em lowest strata}. And if we take $u=\big[E,\bar{\partial}_E,0\big]\in P_0$, then $G_0^0(u)=H^0 \big(X,\operatorname{End}(E)\otimes\Omega_X^1\big)\subseteq M_{\rm Dol}(X,r)$, the space of Higgs fields on~$E$. This fiber is closed if and only if~$E$ is very stable, i.e., there is no non-zero nilpotent Higgs field on~$E$ (see also~\cite{P}).

The space $P_u$ of uniformizing Higgs bundles (see last section) corresponds to the stratum of opers $G_u^1$ in $M_{\rm dR}$ and the stratum $G_u^0$ of Hitchin component in $M_{\rm Dol}$, moreover, $G_u^1$ is closed in $M_{\rm dR}$, since $G_u^0$ is closed in $M_{\rm Dol}$ \cite{CS8}. We will call them the {\em oper stratum} and the {\em Hitchin stratum}, respectively.

In \cite{CS8}, Simpson proposed another method to study the behaviour of the stratifications. Let~$M$ be a (quasi-)projective variety with a stratification of locally closed subsets $M=\coprod\limits_{\alpha\in\Lambda} G_\alpha$, we call this stratification {\em nested} if there is a partial order $(\Lambda,\leq)$ such that
\begin{gather*}
\overline{G_\alpha}=\coprod_{\beta\leq\alpha}G_\beta,
\end{gather*}
this implies the partial order is defined as
\begin{gather*}
\beta\leq\alpha \Longleftrightarrow G_\beta\subseteq\overline{G_\alpha}.
\end{gather*}

\begin{Conjecture}[nestedness conjecture]\label{nest} The stratifications for $M_{\rm Dol}(X,r)$ and $M_{\rm dR}(X,r)$ are both nested, and the arrangements for both stratifications are the same.
\end{Conjecture}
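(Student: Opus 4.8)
The plan is to reduce both stratifications to the single $\mathbb{C}^*$-action on the Hodge moduli space $M_{\rm Hod}(X,r)$, where all of the data live at once, and to extract the partial order from the closure relations among the strata $G_\alpha$ there. Since the action scales $\lambda\mapsto t\lambda$, every fiber $\pi^{-1}(\lambda)$ with $\lambda\neq 0$ is carried isomorphically onto $M_{\rm dR}(X,r)=\pi^{-1}(1)$, so the oper stratification is, up to this isomorphism, independent of the nonzero $\lambda$ chosen, while the Dolbeault stratification is the restriction to $\lambda=0$. First I would introduce the candidate order intrinsically on the index set by $\beta\leq\alpha \Longleftrightarrow P_\beta\subseteq \overline{G_\alpha}$ (closure in $M_{\rm Hod}$), and check it is a genuine partial order: reflexivity holds since $P_\alpha\subseteq G_\alpha$; the $\mathbb{C}^*$-invariance of each $G_\alpha$ makes $\overline{G_\alpha}$ closed and invariant, hence it contains the flow-limit $\lim_{t\to 0}t\cdot x\in P$ of each of its points; and antisymmetry and transitivity are forced once one orders the components $P_\alpha$ by the energy functional. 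Nestedness is then precisely the statement that this order agrees with the one in the Definition, i.e.\ that $P_\beta\subseteq\overline{G_\alpha}$ upgrades to $G_\beta\subseteq\overline{G_\alpha}$.

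For the Dolbeault side I would run Hitchin's Morse theory. The energy functional, equivalently the moment map of the circle action $\theta\mapsto e^{i\varphi}\theta$, is proper and Morse--Bott with critical loci exactly the components $P_\alpha$, which are thereby ordered by their critical values. The Bia{\l}ynicki-Birula strata $G_\alpha^0$ are the unstable (downward) sets, and the general principle that the boundary of an unstable set meets only unstable sets of strictly lower critical value would give $\overline{G_\alpha^0}\subseteq\coprod_{\beta\leq\alpha}G_\beta^0$; combined with the $\mathbb{C}^*$-invariance of the closure and the properness of the Hitchin map $h$, which controls all $t\to 0$ limits, one upgrades this to the equality $\overline{G_\alpha^0}=\coprod_{\beta\leq\alpha}G_\beta^0$ that defines nestedness, with the order induced by the energy.

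To transfer nestedness to $M_{\rm dR}$ and to identify the two arrangements, I would use $M_{\rm Hod}$ as a bridge and argue by spreading out one-parameter degenerations. A relation $P_\beta\subseteq\overline{G_\alpha^0}$ is witnessed by a family over a disk whose generic member lies in $G_\alpha^0$ and whose central fiber lands on $P_\beta$; the $\mathbb{C}^*$-equivariance of the whole picture lets one deform such a family across the fibration $\pi$, producing the corresponding degeneration inside $\pi^{-1}(1)$ and hence the relation $P_\beta\subseteq\overline{G_\alpha^1}$ in $M_{\rm dR}$, and conversely. Because both degenerations are governed by the same flow toward the common fixed locus $P=\coprod_\alpha P_\alpha$, the orders they induce on the index set coincide, which is exactly the assertion that the arrangements agree.

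The hard part is this last transfer, and it is where the conjecture genuinely resists proof. Closure does not commute with restriction to a fiber: a priori $\overline{G_\alpha}\cap\pi^{-1}(1)$ can be strictly larger than the closure $\overline{G_\alpha^1}$ computed inside $M_{\rm dR}$, so the valuative criterion must be verified by hand rather than inherited from $M_{\rm Hod}$. Worse, the de Rham side has no proper analogue of the Hitchin map and no energy-type Morse function to constrain the $t\to 0$ limits, so the Dolbeault argument cannot simply be copied; controlling the de Rham closure relations directly, and matching them to the Dolbeault ones despite $M_{\rm dR}$ and $M_{\rm Dol}$ being very different as algebraic varieties, is the real obstacle. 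This is consistent with the present status of the problem, where only partial results are known, for instance for the moduli space of rank~$2$ parabolic connections on $\mathbb{P}^1$ minus four points.
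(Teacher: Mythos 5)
This statement is a \emph{conjecture}: the paper offers no proof of it, recording only that Simpson established the rank~2 case by deformation-theoretic techniques and that higher rank remains open. So the question is whether your outline actually closes that gap, and it does not --- as you yourself concede in your final paragraph. Two of the gaps deserve to be named more precisely than you name them. First, even your Dolbeault half is not a proof. The Morse--Bott/Bia{\l}ynicki-Birula principle you invoke gives at best a frontier inclusion of the form $\overline{G^0_\alpha}\subseteq\bigcup_\beta G^0_\beta$ over indices $\beta$ of lower critical value; nestedness is the much stronger all-or-nothing property that whenever $\overline{G^0_\alpha}$ meets a stratum $G^0_\beta$ it must contain it \emph{entirely}, i.e., that the closure is a union of whole strata. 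That property fails for general $\mathbb{C}^*$-actions even on smooth projective varieties, so no ``general principle'' can yield the equality $\overline{G^0_\alpha}=\coprod_{\beta\leq\alpha}G^0_\beta$; and $M_{\rm Dol}(X,r)$ is singular, so the Morse--Bott framework itself requires justification there. Second, the transfer mechanism through $M_{\rm Hod}(X,r)$ cannot work as stated: the $\mathbb{C}^*$-action scales $\lambda$, so the orbit closure of any point of $\pi^{-1}(1)$ exits the fiber, and a one-parameter degeneration witnessing $P_\beta\subseteq\overline{G_\alpha}$ in $M_{\rm Hod}$ necessarily ends in the fiber over $0$. A closure relation in $M_{\rm dR}(X,r)$ must be witnessed by a family lying entirely inside $\pi^{-1}(1)$, and equivariance provides no way to push a degeneration from the $\lambda=0$ fiber into the $\lambda=1$ fiber; you correctly observe that $\overline{G_\alpha}\cap\pi^{-1}(1)$ may strictly contain $\overline{G^1_\alpha}$, and that observation is precisely why your bridge collapses rather than a side remark.

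One factual correction: the partial result you cite, rank~2 parabolic connections on $\mathbb{P}^1$ minus four points, is in this paper progress on Simpson's \emph{foliation} conjecture, not on nestedness; the only established case of nestedness recorded here is Simpson's rank~2 theorem, proved by deformation theory rather than by the Morse-theoretic route you sketch. In sum, your text is a sensible research program with the main obstacles honestly flagged, but it establishes nothing beyond what the paper already states is known, and its Dolbeault step overstates what Bia{\l}ynicki-Birula theory delivers.
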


Simpson himself studied and showed it for rank 2 case by using the beautiful techniques of deformation theory, but for the higher rank case, it is still an open problem.

At least from the proof of nestedness conjecture of rank~2 case, we can see that the oper stratum (the highest stratum)~$G_u^1$ is the stratum of minimal dimension among all the strata, based on this, Simpson proposed another conjecture~\cite{CS8}:

\begin{Conjecture}[oper stratum conjecture] The oper stratum $G_u^1$ is the unique closed stratum and the unique stratum with minimal dimension.
\end{Conjecture}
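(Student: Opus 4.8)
The plan is to establish the two halves of the conjecture separately---minimality of dimension and uniqueness of the closed stratum---and then to see that both single out $G_u^1$ through the same rigidity property of the fixed-point set.

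For the dimension statement I would start from the fibration $p_\alpha^1\colon G_\alpha^1\to P_\alpha$ together with the fact, already recorded above, that over a stable fixed point $u$ the fiber $G_\alpha^1(u)$ is a Lagrangian submanifold of $M_{\rm dR}$, hence has dimension $\tfrac12\dim M_{\rm dR}$. Provided $P_\alpha$ meets the stable locus, this gives
\begin{gather*}
\dim G_\alpha^1=\dim P_\alpha+\tfrac12\dim M_{\rm dR},
\end{gather*}
so that $\dim G_\alpha^1$ is minimized exactly when $\dim P_\alpha=0$, the minimum value $\tfrac12\dim M_{\rm dR}$ being attained by the oper stratum, whose base $P_u$ of uniformizing Higgs bundles is a finite set. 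The minimal-dimension assertion is thereby reduced to a rigidity statement: the uniformizing Higgs bundles form the only zero-dimensional component of $P$ meeting the stable locus. I would prove this by identifying the Zariski tangent space to $P_\alpha$ at a fixed point $\big(\bigoplus_p E^p,\theta\big)$ with the first hypercohomology $\mathbb H^1$ of the weight-zero deformation complex $\mathrm{End}^0(E)\to\mathrm{End}^{-1}(E)\otimes\Omega_X^1$ with differential $[\theta,\,\cdot\,]$, and then showing that its vanishing forces every graded piece $E^p$ to be a line bundle on which $\theta$ restricts to an isomorphism $E^p\xrightarrow{\sim}E^{p-1}\otimes\Omega_X^1$, which is precisely the uniformizing shape.

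For the uniqueness of the closed stratum I would exploit the nested structure of the oper stratification. Granting that $\overline{G_\alpha^1}=\coprod_{\beta\le\alpha}G_\beta^1$, a stratum is closed exactly when its index is minimal for $\le$; since $G_u^1$ is already known to be closed (because the Hitchin stratum $G_u^0$ is closed in $M_{\rm Dol}$), it remains to show that $u$ is the unique minimal index. Here I would relate minimality in this order back to the rigidity (zero-dimensionality of $P_\alpha$) isolated above, thereby reconciling the two characterizations of $G_u^1$. The algebraic-geometric Lemma proved just above is the model for the Dolbeault counterpart of this step---it rules out non-oper closed fibers by producing an orbit with identified endpoints on which no ample linearized line bundle can exist---and I would try to carry its mechanism over, now through the $\mathbb C^*$-linearization on the fibration $M_{\rm Hod}(X,r)\to\mathbb C$ restricted to the fiber over $1$.

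The hard part will be precisely the two inputs that remain open for $r\ge3$: the nestedness of the oper stratification of $M_{\rm dR}(X,r)$, and the rigidity classification showing the uniformizing systems of Hodge bundles are the unique infinitesimally rigid stable fixed points. The de Rham side is intrinsically more difficult than the Dolbeault side because it possesses neither the proper Hitchin map nor the ample $\mathbb C^*$-linearized line bundle that drive the Dolbeault arguments (including the Lemma above); hence the closure relations cannot be extracted from an energy functional or a moment map, and one is forced to control degenerations of flat connections under the $\mathbb C^*$-action directly, presumably by the deformation-theoretic techniques Simpson used to settle the rank-$2$ case.
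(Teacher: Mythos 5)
This statement is a \emph{conjecture} (due to Simpson \cite{CS8}); the paper does not prove it and does not claim to. What it records is strictly partial progress: the theorem from \cite{H}, obtained by classifying the irreducible components of the fixed-point set $P$ of the $\mathbb{C}^*$-action, says only that the oper stratum is the unique closed stratum \emph{of minimal dimension}, and the paper states explicitly that it is unknown whether a closed stratum of strictly higher dimension exists. So there is no proof in the paper to compare yours against, and your proposal --- as you yourself concede --- is a conditional outline resting on two inputs that are open for $r\geq 3$ (nestedness of the oper stratification, and the rigidity classification of zero-dimensional fixed components). That alone means it is not a proof. A correct blind response would either have flagged the statement as open or supplied genuinely new mathematics; yours does neither.

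Beyond conditionality, several steps are concretely flawed. First, your proposed transfer of the ample-linearized-line-bundle lemma ``through the $\mathbb{C}^*$-linearization on the fibration $M_{\rm Hod}(X,r)\to\mathbb{C}$ restricted to the fiber over $1$'' is incoherent: the action sends $\pi^{-1}(\lambda)$ to $\pi^{-1}(t\lambda)$, so $M_{\rm dR}=\pi^{-1}(1)$ is not invariant and inherits no $\mathbb{G}_m$-linearization at all --- this is exactly the obstruction you correctly name one sentence later, so the Dolbeault mechanism cannot be carried over as stated. Second, the dimension formula $\dim G_\alpha^1=\dim P_\alpha+\tfrac12\dim M_{\rm dR}$ invokes the Lagrangian property, which the paper (following \cite{CS8,CW}) asserts only for fibers over \emph{stable} fixed points; components $P_\alpha$ containing no stable points, and fiber-dimension jumps over the strictly semistable locus, are not covered, so the reduction ``minimal dimension $\Leftrightarrow\dim P_\alpha=0$'' is not established. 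Third, your rigidity step conflates $\dim P_\alpha=0$ with vanishing of the Zariski tangent space: the weight-zero piece of $\mathbb{H}^1$ of the complex $\big(\operatorname{End}(E),[\theta,\cdot\,]\big)$ controls only first-order deformations, and you give no argument that its vanishing forces every $E^p$ to be a line bundle with $\theta\colon E^p\to E^{p-1}\otimes\Omega_X^1$ an isomorphism --- that classification is precisely the nontrivial content of \cite{H}, not a routine consequence. Finally, even granting nestedness, ``closed $\Leftrightarrow$ minimal index'' leaves the \emph{uniqueness} of the minimal index unproved, and the paper's own observation that on the Dolbeault side the Hitchin stratum is the unique closed stratum of minimal dimension yet \emph{not} the unique closed stratum shows that no formal order-theoretic or transferred argument can suffice: uniqueness of the closed stratum in $M_{\rm dR}$ requires de Rham-specific input that your outline does not supply.
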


In \cite{H}, by classifying irreducible components of the fixed point set $P$ of the $\mathbb{C}^*$-action on~$M_{\rm Dol}(X,r)$, we partially confirmed this conjecture:

\begin{Theorem}[\cite{H}]The oper stratum is the unique closed stratum with minimal dimension.
\end{Theorem}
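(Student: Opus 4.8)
The plan is to reduce the statement to a dimension count on the fixed-point components $P_\alpha$ of the $\mathbb{C}^*$-action. First I would exploit the projection $p_\alpha^1\colon G_\alpha^1\to P_\alpha$ together with the Lagrangian property recalled above: by Theorem~\ref{CW} and the results of \cite{CW,CS8}, over a stable fixed point $u\in P_\alpha$ the fiber $G_\alpha^1(u)=(p_\alpha^1)^{-1}(u)$ is an affine Lagrangian submanifold of $M_{\rm dR}$, hence of dimension $\frac{1}{2}\dim M_{\rm dR}(X,r)$, independently of $u$ and of $\alpha$. This yields the key formula
\begin{gather*}
\dim G_\alpha^1=\dim P_\alpha+\tfrac{1}{2}\dim M_{\rm dR}(X,r)
\end{gather*}
for every stratum whose fixed-point component consists of stable points. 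Thus minimizing the stratum dimension is equivalent to minimizing $\dim P_\alpha$, and the whole problem is transferred to the fixed-point locus $P$.

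Next I would classify the irreducible components $P_\alpha$ by the discrete type of the corresponding systems of Hodge bundles $\bigoplus_p E^p$ with $\theta\colon E^p\to E^{p-1}\otimes K_X$, recorded by the ranks and degrees of the graded pieces $E^p$. For each type the dimension of $P_\alpha$ is computed by deformation theory, as the weight-zero part of the hypercohomology of the endomorphism complex of the Higgs bundle. The goal of this step is the sharp bound $\dim P_\alpha\geq 0$ together with the rigidity statement that equality forces the system to be a full flag of line bundles ($\operatorname{rk}E^p=1$ for all $p$, of length $r$) with every induced Higgs map $\theta|_{E^p}$ an isomorphism. Under the ${\rm SL}(r,\mathbb{C})$ (fixed determinant) condition such a chain is determined up to the finitely many line bundles $L$ with $L^r\cong K_X^{r(r-1)/2}$, which are exactly the uniformizing Higgs bundles. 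Consequently the minimal stratum dimension equals $\frac{1}{2}\dim M_{\rm dR}(X,r)$ and is attained on strata lying over $P_u$, that is, on the oper stratum $G_u^1$.

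It then remains to combine minimality with closedness. The oper stratum $G_u^1$ is closed because the Hitchin stratum $G_u^0$ is closed in $M_{\rm Dol}$ and this closedness is transported to $M_{\rm dR}$ by \cite{CS8}, as noted above. To obtain uniqueness among closed strata of minimal dimension I would argue in two complementary directions: any stratum lying over a positive-dimensional component $P_\alpha$ has dimension strictly larger than $\frac{1}{2}\dim M_{\rm dR}(X,r)$ and so cannot be minimal, while any minimal-dimensional stratum other than $G_u^1$ must lie over a zero-dimensional component that is \emph{not} of uniformizing type; for such a stratum one checks that it is not a sink of the downward flow, so its closure meets lower strata and it fails to be closed. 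Either way, $G_u^1$ is the unique stratum that is simultaneously closed and of minimal dimension, which is exactly the asserted (partial) form of the oper stratum conjecture.

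The main obstacle is the second step: establishing the dichotomy $\dim P_\alpha\geq 0$ with equality only for the uniformizing type, uniformly in rank. This demands a deformation-theoretic estimate for the moduli of stable systems of Hodge bundles of arbitrary type, showing that any graded piece of rank $\geq 2$, or any Higgs map failing to be an isomorphism, produces a positive-dimensional family of type-preserving deformations. A secondary technical point is the treatment of components containing strictly polystable (non-stable) fixed points, where the fiber of $p_\alpha^1$ need no longer be a smooth Lagrangian and the clean dimension formula must be replaced by an inequality; I would control these by a specialization argument showing they cannot push the minimal dimension below that of the oper stratum, and that their strata are never closed. It is precisely this use of closedness that lets one bypass the full classification required for the stronger uniqueness statements in the conjecture.
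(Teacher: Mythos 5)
Your overall strategy is the same as the one the paper attributes to~\cite{H}: the survey states explicitly that the theorem is proved ``by classifying irreducible components of the fixed point set $P$ of the $\mathbb{C}^*$-action on $M_{\rm Dol}(X,r)$'', and your reduction via the affine Lagrangian fibers of $p_\alpha^1\colon G_\alpha^1\to P_\alpha$ (so that $\dim G_\alpha^1=\dim P_\alpha+\frac{1}{2}\dim M_{\rm dR}(X,r)$ over stable components, by \cite{CS8} and \cite[Corollary~1.5]{CW}), together with closedness of $G_u^1$ inherited from closedness of the Hitchin stratum $G_u^0$ in $M_{\rm Dol}$, is exactly the right frame. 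The problem is that the pivotal step is asserted rather than proved, and is miscalibrated where it is asserted. The bound ``$\dim P_\alpha\geq 0$ with equality only for the uniformizing type'' is vacuous as a lower bound (every nonempty component has dimension $\geq 0$) and is wrong in the setting of this survey: the theorem concerns $M_{\rm dR}(X,r)$ for ${\rm GL}(r,\mathbb{C})$, where the uniformizing systems $L\oplus\big(L\otimes K_X^{-1}\big)\oplus\cdots\oplus\big(L\otimes K_X^{-(r-1)}\big)$ with $\deg L=(r-1)(g-1)$ form a torsor over $\operatorname{Pic}^{(r-1)(g-1)}(X)$, so $P_u$ has dimension $g$, not $0$; finiteness of the choices of $L$ holds only after fixing the determinant, i.e., for ${\rm SL}(r,\mathbb{C})$. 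What actually has to be established is the strict inequality $\dim P_\alpha>\dim P_u$ for every other component $P_\alpha$, uniformly in the discrete type (ranks and degrees of the $E^p$, moduli of the maps $\theta|_{E^p}$ when they acquire zeros, components with strictly polystable points), and this classification is precisely the content of the proof in~\cite{H} --- which you defer, candidly, as ``the main obstacle''. A proof outline whose main obstacle is the theorem's actual proof has a genuine gap.

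A second, independent problem is your fallback mechanism for uniqueness: the claim that a hypothetical other minimal-dimensional stratum ``is not a sink of the downward flow, so its closure meets lower strata and it fails to be closed'' presupposes that closures of Bia{\l}ynicki-Birula strata decompose into lower strata. For $M_{\rm dR}$ this is exactly the nestedness property of Conjecture~\ref{nest}, which the survey records as open beyond rank~$2$; moreover $M_{\rm dR}$ carries no proper Hitchin map, so limits of the $\mathbb{C}^*$-flow at $t\to\infty$ need not exist and the ``downward flow'' heuristic has no a priori content there. If the rigidity statement of the second step were actually proved (unique minimal component $=P_u$), this branch would be empty and the issue moot; as written, however, both legs of your uniqueness argument rest on unproven assertions, so the proposal reproduces the paper's route in outline but does not close it.
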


We still do not know if there exists a closed stratum of strictly higher dimension. Moreover, we showed the Hitchin stratum is also the unique closed stratum of minimal dimension, but it is not the unique closed stratum.

In \cite{GZ}, the authors considered the relation between the Bia{\l}ynicki-Birula stratification and the Shatz stratification of~$M_{\rm Dol}(X,3)$, where the Shatz stratification is given by Harder--Narasimhan type of the underlying vector bundles of the Higgs bundles. With this inspiration, we can consider the relation between the oper stratification given by Simpson filtrations and the Shatz stratification of $M_{\rm dR}(X,3)$ given by Harder--Narasimhan type of the underlying vector bundles.

For a flat bundle $(E,\nabla)$ of rank~3, suppose $E$ is not a stable vector bundle. Then the Harder--Narasimhan type of $E$ can be given as follows:
\begin{enumerate}\itemsep=0pt
\item Type $(1,2)$, that is, the Harder--Narasimhan filtration is given by
$0\subsetneq H^1\subsetneq E$
with $\operatorname{rk}\big(H^1\big)=1$ and $\deg\big(H^1\big)=d_1$. In this case, $H_1\subseteq E$ is the maximal destabilizing subsheaf, so $d_1>0$.
\item Type $(2,1)$, that is, the Harder--Narasimhan filtration is given by
$0\subsetneq H^1\subsetneq E$
with $\operatorname{rk}\big(H^1\big)=2$ and $\deg\big(H^1\big)=d_1$. As in~(1), $H_1\subseteq E$ is the maximal destabilizing subsheaf, so $d_1>0$.
\item Type $(1,1,1)$, that is, the Harder--Narasimhan filtration is given by $0\subsetneq H^1\subsetneq H^2\subsetneq E$
with $\operatorname{rk}\big(H^1\big)=1$, $\deg\big(H^1\big)=d_1$ and $\operatorname{rk}\big(H^2\big)=2$, $\deg\big(H^2/H^1\big)=d_2$. In this case, $H_1\subseteq E$ is the maximal destabilizing subsheaf, and $H_2/H_1\subseteq E/H_1$ is the maximal destabilizing subsheaf, hence $d_1>0$ and $d_1+d_2>0$.
\end{enumerate}

\begin{Theorem}\label{thm4.10}Let $(E,\nabla)\in M_{\rm dR}(X,3)$ be an irreducible flat bundle of rank~$3$ that is graded-stable, and such that~$E$ is not a stable vector bundle. Then its Simpson filtration is determined by its Harder--Narasimhan filtration as follows:
\begin{itemize}\itemsep=0pt
\item[$(1)$] If the Harder--Narasimhan type of $E$ is $(1,2)$ as above. Let $I\subset E/H^1$ be the sub line bundle by saturating the subsheaf $\theta\big(H^1\big)\otimes K_X^{-1}\subset E/H^1$, where $\theta\colon H^1\to E/H^1\otimes K_X$ is the non-zero map induced by~$\nabla$, then
\begin{itemize}\itemsep=0pt
\item[$(1.1)$] $0<d_1<g-1$ $\&$ $d_1-2g+2\leq\deg(I)<-d_1$, the Simpson filtration coincides with the Harder--Narasimhan filtration. Hence
\begin{gather*}
\lim_{t\to0}t\cdot(E,\nabla)=\big[H^1\oplus E/H^1,\theta\big].
\end{gather*}
\item[$(1.2)$] $0<d_1<g-1$ $\&$ $-d_1<\deg(I)\leq-\frac{d_1}{2}$ or $g-1<d_1\leq\frac{4g-4}{3}$ $\&$ $d_1-2g+2\leq\deg(I)\leq-\frac{d_1}{2}$, in either case, the Simpson filtration is given by
\begin{gather*}
0\subsetneq H^1\subsetneq F^1\subsetneq E,
\end{gather*}
where $F^1=\operatorname{Ker}\big(E\to\frac{E/H^1}{I}\big)\subset E$ is a rank~$2$ subbundle. Hence
\begin{gather*}
\lim_{t\to0}t\cdot(E,\nabla)=\left[H^1\oplus I\oplus\frac{E/H^1}{I},\begin{pmatrix}
0&0&0\\ \varphi_1&0&0\\0&\varphi_2&0
\end{pmatrix}\right],
\end{gather*}
where $\varphi_1\colon H^1\to I\otimes K_X$ is induced by $\theta$ and $\varphi_2\colon I\to\frac{E/H^1}{I}\otimes K_X$ is induced by $\nabla\colon F^1\to E\otimes K_X$.
\end{itemize}
\item[$(2)$] If the Harder--Narasimhan type of $E$ is $(2,1)$ as above. Let $\theta\colon H^1\to E/H^1\otimes K_X$ be the non-zero induced map and let $N:=\operatorname{Ker}\big(H^1\to E/H^1\otimes K_X\big)\subset H^1$ be the sub line bundle, then
\begin{itemize}\itemsep=0pt
\item[$(2.1)$] $0<d_1<g-1$ $\&$ $2d_1-2g+2\leq\deg(N)<0$, the Simpson filtration coincides with the Harder--Narasimhan filtration. Hence
\begin{gather*}
\lim_{t\to0}t\cdot(E,\nabla)=\big[H^1\oplus E/H^1,\theta\big].
\end{gather*}
\item[$(2.2)$] $0<d_1<g-1$ $\&$ $0<\deg(N)\leq\frac{d_1}{2}$ or $g-1<d_1\leq\frac{4g-4}{3}$ $\&$ $2d_1-2g+2\leq\deg(N)\leq\frac{d_1}{2}$, in either case, the Simpson filtration is given by
\begin{gather*}
0\subsetneq N\subsetneq G^1\subsetneq E,
\end{gather*}
where $G_1:=\operatorname{Ker}\big(E\to E/H^1\big)\subset E$ is a rank~$2$ subbundle. Hence
\begin{gather*}
\lim_{t\to0}t\cdot(E,\nabla)=\left[N\oplus H^1/N\oplus E/H^1,\begin{pmatrix}
0&0&0\\ \psi_1&0&0\\0&\psi_2&0
\end{pmatrix}\right],
\end{gather*}
where $\psi_1\colon N\to H^1/N\otimes K_1$ and $\psi_2\colon H^1/N\to E/H^1\otimes K_X$ are induced by $\nabla\colon N\to H^1\otimes K_X$.
\end{itemize}
\item[$(3)$] If the Harder--Narasimhan type of $E$ is $(1,1,1)$ as above. Let $I\subset E/H^1$ be the sub line bundle as defined in~$(1)$ and $N\subset H^2$ be the sub line bundle as defined in~$(2)$.
\begin{itemize}\itemsep=0pt
\item[$(3.1)$] $\max\{-d_1,2d_2-d_1\}<\deg(I)\leq d_2$, the Simpson filtration is given by
\begin{gather*}
0\subsetneq H^1\subsetneq F^1\subsetneq E,
\end{gather*}
where $F^1=\operatorname{Ker}\big(E\to\frac{E/H^1}{I}\big)\subset E$ is a rank~$2$ subbundle. Hence
\begin{gather*}
\lim_{t\to0}t\cdot(E,\nabla)=\left[H^1\oplus I\oplus\frac{E/H^1}{I},\begin{pmatrix}
0&0&0\\ \varphi_1&0&0\\0&\varphi_2&0
\end{pmatrix}\right],
\end{gather*}
where $\varphi_1\colon H^1\to I\otimes K_X$ is induced by $\theta$ in~$(1)$ and $\varphi_2\colon I\to\frac{E/H^1}{I}\otimes K_X$ is induced by $\nabla\colon F^1\to E\otimes K_X$. In particular, if $\deg(I)=d_2$, then $F^1=H^2$, that is, the Simpson filtration coincides with the Hardar--Narasimhan filtration.
\item[$(3.2)$] $d_1-2g+2\leq\deg(I)<\min\{-d_1,2d_2-d_1\}$,
\begin{itemize}\itemsep=0pt
\item[$(3.2.1)$] $d_2<0$,
then the Simpson filtration is given by
\begin{gather*}
0\subsetneq H^1\subsetneq E.
\end{gather*}
Hence
\begin{gather*}
\lim_{t\to0}t\cdot(E,\nabla)=\big[H^1\oplus E/H^1,\theta\big],
\end{gather*}
where $\theta\colon H^1\to E/H^1\otimes K_X$ is induced from $\nabla\colon H^1\to E\otimes K_X$.
\item[$(3.2.2)$] $d_2>0$,
\begin{itemize}\itemsep=0pt
\item[$\bullet$] $2(d_1+d_2)-2g+2\leq\deg(N)<0$, the Simpson filtration is given by
\begin{gather*}
0\subsetneq H^2\subsetneq E.
\end{gather*}
Hence
\begin{gather*}
\lim_{t\to0}t\cdot(E,\nabla)=\big[H^2\oplus E/H^2,\theta\big],
\end{gather*}
where $\theta\colon H^2\to E/H^2\otimes K_X$ is induced from $\nabla\colon H^2\to E\otimes K_X$.
\item[$\bullet$] $0<\deg(N)\leq d_1$, the Simpson filtration is given by
\begin{gather*}
0\subsetneq N\subsetneq H^2\subsetneq E.
\end{gather*}
Hence
\begin{gather*}
\lim_{t\to0}t\cdot(E,\nabla)=\left[N\oplus H^2/N\oplus E/H^2,\begin{pmatrix}
0&0&0\\ \varphi_1&0&0\\0&\varphi_2&0
\end{pmatrix}\right],
\end{gather*}
where $\psi_1\colon N\to H^2/N\otimes K_X$ and $\theta$ in~$(1)$ and $\psi_2\colon H^2/N\to E/H^2\otimes K_X$ are induced by $\nabla\colon N\to H^2\otimes K_X$. In particular, if $\deg(N)=d_1$, then \mbox{$N=H^1$}, that is, the Simpson filtration coincides with the Hardar--Narasimhan filtration.
\end{itemize}
\end{itemize}
\end{itemize}
\end{itemize}
\end{Theorem}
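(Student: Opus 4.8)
The plan is to exploit the uniqueness half of Theorem~\ref{thm4.3}: since $(E,\nabla)$ is assumed graded-stable, part~(3) guarantees that any Simpson filtration, once found, is the \emph{only} one up to indices translation, and part~(4) reads off $\lim_{t\to 0}t\cdot(E,\nabla)$ as the $S$-equivalence class of its graded. It therefore suffices, in each Harder--Narasimhan regime, to exhibit one filtration obeying Griffiths transversality with semistable graded and to observe that it is forced. I would run Simpson's iterative algorithm recalled after Theorem~\ref{thm4.3}, seeded by the trivial filtration $0\subset E$ whose graded is $(E,0)$. Since $(E,\nabla)$ is irreducible, none of the connecting maps induced by $\nabla$ can vanish (a vanishing map would produce a $\nabla$-invariant subbundle), so the line bundles $I$ and $N$ in the statement are genuinely defined and the chain maps in the terminal gradeds are nonzero.

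First step, common to all three types. Because the Higgs field of $(E,0)$ is zero, its maximal destabilizing subsheaf is exactly the top piece $H^1$ of the Harder--Narasimhan filtration of $E$, and Simpson's kernel formula $G^p=\operatorname{Ker}(E\to (E/F^p)/H^{p-1})$ returns the two-step filtration $0\subset H^1\subset E$, whose graded Higgs bundle is $H^1\oplus E/H^1$ with Higgs field the connecting map $\theta$ induced by $\nabla$. In type $(1,2)$ the saturation of $\operatorname{im}\theta$ inside $E/H^1$ is the line bundle $I$, while in type $(2,1)$ the governing datum is $N=\ker\theta$; these are the line bundles named in the statement.

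Second step, the case split. Whether the algorithm halts here is decided by testing the $\theta$-invariant subsheaves of $H^1\oplus E/H^1$ against the slope $\mu(E)=0$. The candidates are assembled from $H^1$, $I$ (or $N$), and the Harder--Narasimhan line of the rank-two factor, and each numerical boundary in the statement arises from one of three mechanisms: a nonzero sheaf map forces a \emph{lower} bound through the twist $\deg(\cdot)\mapsto\deg(\cdot)+2g-2$ (yielding $d_1-2g+2\le\deg I$ and $2d_1-2g+2\le\deg N$); semistability of the Harder--Narasimhan quotient forces an \emph{upper} bound ($\deg I\le -d_1/2$, $\deg N\le d_1/2$); and the stability of the graded forces the \emph{dividing} value ($-d_1$ in type $(1,2)$, $0$ in type $(2,1)$). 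When the graded is already stable the Simpson filtration equals the Harder--Narasimhan filtration (cases (1.1), (2.1)); otherwise the maximal destabilizing subsheaf is $H^1\oplus I$ (respectively $N$), and a second application of the kernel formula produces $0\subset H^1\subset F^1\subset E$ (respectively $0\subset N\subset H^1\subset E$) with the displayed chain graded, whose stability I would verify directly from the chain of line bundles. The constraints $d_1<g-1$ and $d_1\le\frac{4g-4}{3}$ are precisely the nonemptiness conditions for the indicated ranges of $\deg I$ and $\deg N$.

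The main obstacle is type $(1,1,1)$, where $E/H^1$ is itself an unstable rank-two bundle, so after the first step the graded $H^1\oplus E/H^1$ can be destabilized either by the line $I$ from $\operatorname{im}\theta$ (through the rank-two subsheaf $H^1\oplus I$, of slope $\tfrac12(d_1+\deg I)$) or by the Harder--Narasimhan line $H^2/H^1$ (of slope $d_2$). Comparing these two slopes is exactly what produces the thresholds $\max\{-d_1,2d_2-d_1\}$ and $\min\{-d_1,2d_2-d_1\}$, and the competition is resolved differently according to the sign of $d_2$. The delicate point is to identify, in each regime, which invariant subsheaf is the \emph{maximal} destabilizing one, since this dictates whether the second iteration refines to a full flag $0\subset H^1\subset F^1\subset E$ (case (3.1)), halts at $0\subset H^1\subset E$ (case (3.2.1)), jumps to the rank-two step $0\subset H^2\subset E$, or refines that further to $0\subset N\subset H^2\subset E$ (the two bullets of (3.2.2), where the bound $\deg N\le d_1$ records that the maximal sub-line-bundle of the unstable $H^2$ has degree $d_1$, attained exactly by $N=H^1$). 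Once the maximal destabilizing subsheaf and its system-of-Hodge-bundles grading are pinned down, the kernel formula together with Theorem~\ref{thm4.3}(4) deliver the stated filtration and limit mechanically; the remaining work is the bookkeeping of saturations and the routine verification that each terminal chain graded is stable.
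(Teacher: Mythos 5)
Your proposal is correct and takes essentially the same route as the paper's proof: both run Simpson's iterated destabilizing process starting (after the trivial step) from the Harder--Narasimhan-induced filtration $0\subset H^1\subset E$ with graded $\big(H^1\oplus E/H^1,\theta\big)$, derive the same numerical bounds from nonvanishing of the induced maps (the $2g-2$ twist), semistability of the HN pieces, and the stability test on the graded, and settle type $(1,1,1)$ by the same slope comparison between $H^1\oplus I$ and $H^2/H^1$ followed by the $\mu(N)$ test. The only cosmetic difference is that you handle types $(1,2)$ and $(2,1)$ in parallel where the paper proves $(1)$ and invokes duality for $(2)$.
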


\begin{proof}(1) and (2) are dual to each other, here we just prove~(1). Since $H^1\subset E$ is the maximal destabilizing subbundle, we have $\mu\big(H^1\big)=d_1>0$, and since $I$ is the sub line bundle of the semistable bundle $E/H^1$, we have $\mu(I)\leq\mu\big(E/H^1\big)=-\frac{d_1}{2}$. On the other hand, the induced map $\theta\colon H^1\to I\otimes K_X$ is non-zero, which gives $\deg(I)\geq d_1-2g+2$. These give the maximal bound of $d_1$ and $\deg(I)$ as follows:
\begin{gather*}
0<d_1\leq\frac{4g-4}{3},\qquad
d_1-2g+2\leq\deg(I)\leq-\frac{d_1}{2}.
\end{gather*}
Consider the induced graded Higgs bundle $\big(H^1\oplus E/H^1,\theta\big)$, it is graded stable if and only if $\mu(H^1\oplus I)<0$, that is, $\deg(I)<-d_1$. Therefore, if the conditions in~(1.1) are satisfied, the Simpson filtration is $0\subsetneq H^1\subsetneq E$, with the associated gr-stable Higgs bundle $\big(H^1\oplus E/H^1,\theta\big)$. When $\big(H^1\oplus E/H^1,\theta\big)$ is not semistable, then its maximal destabilizing subbundle is $H^1\oplus I$, which should satisfy $\mu(H^1\oplus I)>0$. By Simpson's iteration process, the next filtration is
\begin{gather*}
0\subsetneq H^1\subsetneq F^1\subsetneq E,
\end{gather*}
where \looseness=-1 $F^1=\operatorname{Ker}\big(E\to\frac{E/H^1}{I}\big)\subset E$ is a rank 2 subbundle. The associated graded Higgs bundle is
\[
\left(H^1\oplus I\oplus\frac{E/H^1}{I},\begin{pmatrix}
0&0&0\\ \varphi_1&0&0\\0&\varphi_2&0
\end{pmatrix}\right),
\]
with $\varphi_1\colon H^1\to I\otimes K_X$ is induced by $\theta$ and $\varphi_2\colon I\to\frac{E/H^1}{I}\otimes K_X$ is induced by $\nabla\colon F^1\to E\otimes K_X$. It is gr-stable if and only if $\mu\big(\frac{E/H^1}{I}\big)<0$, which can be divided into the two kinds of bounds for~$d_1$ and $\deg(I)$ as in~(1.2), in either case, the Simpson filtration is $0\subsetneq H^1\subsetneq F^1\subsetneq E$. But if the associated graded Higgs bundle is not gr-semistable, we should have $\mu\big(\frac{E/H^1}{I}\big)>0$, that is, $\deg(I)>-d_1$, in this case, $d_1$ and $\deg(I)$ have bounds $0<d_1<g-1, d_1-2g+2\leq\deg(I)<-d_1$. By Simpson's iteration process, the next filtration is $0\subsetneq H^1\subsetneq E$, which comes back to the case~(1.1), the associated graded Higgs bundle will be stable, and the iteration process stop here. Therefore, we finish the proof of~(1).

(3) Since $H^1\subset E$ is the maximal destabilizing subsheaf and $H^2/H^1\subset E/H^1$ is the maximal destabilizing subsheaf. And the induced morphisms $H^1\to I\otimes K_X$ and $H^2/N\to E/H^2\otimes K_X$ are both non-zero, we have the maximal bounds of $\deg(I)$ and $\deg(N)$ as follows:
\begin{gather*}
d_1-2g+2\leq\deg(I)\leq d_2,\qquad 2(d_1+d_2)-2g+2\leq\deg(N)\leq d_1.
\end{gather*}
By the uniqueness of maximal destabilizing subsheaf, $\deg(I)=d_2$ if and only if $I=H^2/H^1$, and $\deg(N)=d_1$ if and only if $N=H^1$.

Look at the graded Higgs bundle $\big(H^1\oplus E/H^1,\theta\big)$, where $\theta\colon H^1\to E/H^1\otimes K_X$ is the induced map from $\nabla\colon H^1\to E\otimes K_X$. It is stable if and only if $\mu\big(H^2/H^1\big)<0$ and $\mu\big(H^1\oplus I\big)<0$, that is, $d_2<0$ and $\deg(I)<-d_1$, this is the case~(3.2.1). When it is not stable, then the maximal destabilizing subsheaf should have positive slope, the possible maximal destabilizing subsheaf is $H^1\oplus I$ or $H^2/H^1$:
\begin{enumerate}\itemsep=-1pt
\item[(a)] If $\mu\big(H^1\oplus I\big)>\mu\big(H^2/H^1\big)$ and $\mu\big(H^1\oplus I\big)>0$, that is, $\max\{2d_2-d_1,-d_1\}<\deg(I)\leq d_2$, then $\big(H^1\oplus E/H^1,\theta\big)$ has maximal destabilizing subsheaf $H^1\oplus I$. By Simpson's iteration, the next filtration is $0\subsetneq H^1\subsetneq F^1\subsetneq E$ for $F^1=\operatorname{Ker}\big(E\to\frac{E/H^1}{I}\big)\subset E$ a rank~2 subbundle. Easy to see that its associated graded Higgs bundle is stable.
\item[(b)] If $\mu\big(H^2/H^1\big)>\mu\big(H^1\oplus I\big)$ and $\mu\big(H^2/H^1\big)>0$, that is, $d_2>0$ and $d_1-2g+2\leq\deg(I)<2d_2-d_1$, then $\big(H^1\oplus E/H^1,\theta\big)$ has maximal destabilizing subsheaf $H^2/H^1$. By Simpson's iteration, the next filtration is $0\subsetneq H^2\subsetneq E$, but we should discuss the stability of the graded Higgs bundle $\big(H^2\oplus E/H^2,\theta'\big)$:
\begin{itemize}\itemsep=-1pt
\item it is stable if and only if $\mu(N)<0$, that is, when $d_2>0$, $d_1-2g+2\leq\deg(I)<2d_2-d_1$ and $2(d_1+d_2)-2g+2\leq\deg(N)<0$, the Simpson filtration is $0\subsetneq H^2\subsetneq E$;
\item if it is not semistable, then its maximal destabilizing subsheaf is $N$ and should satisfy $\mu(N)>0$, and by Simpson's iteration, the next filtration is $0\subsetneq N\subsetneq H^2\subsetneq E$. Its associated Higgs bundle is stable, so the iteration stops.
\end{itemize}
\end{enumerate}

\vspace{-2mm}

Combining all the above, we obtain the statement~(3).
\end{proof}

For each $\alpha$, let $\big(G_\alpha^1\big)^{\rm VHS}\subset G_\alpha^1$ be the subset that consists of the polarized $\mathbb{C}$-VHS (complex variations of Hodge structure). It identifies to those Higgs bundles having the structure of systems of Hodge bundles (i.e., the fixed point set $P_\alpha\subset G_\alpha^0$) by the non-Abelian Hodge correspondence. Therefore, $\big(G_\alpha^1\big)^{\rm VHS}={\rm NAH}(P_\alpha)$. Simpson guessed in~\cite{CS8} that points in $G_\alpha^1\backslash\big(G_\alpha^1\big)^{\rm VHS}$ do not relate to the points in~$G_\alpha^0$ via the non-Abelian Hodge correspondence, this is the following conjecture:

\begin{Conjecture}
$\big(G_\alpha^1\big)^{\rm VHS}=G_\alpha^1\bigcap{\rm NAH}\big(G_\alpha^0\big)$.
\end{Conjecture}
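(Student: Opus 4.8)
The plan is to treat the two inclusions separately. The inclusion $\big(G_\alpha^1\big)^{\rm VHS}\subseteq G_\alpha^1\cap{\rm NAH}\big(G_\alpha^0\big)$ is immediate: since every fixed point $u\in P_\alpha$ satisfies $\lim_{t\to0}t\cdot u=u\in P_\alpha$, we have $P_\alpha\subseteq G_\alpha^0$, hence ${\rm NAH}(P_\alpha)\subseteq{\rm NAH}\big(G_\alpha^0\big)$; combined with the identity $\big(G_\alpha^1\big)^{\rm VHS}={\rm NAH}(P_\alpha)$ and with $\big(G_\alpha^1\big)^{\rm VHS}\subseteq G_\alpha^1$, this gives the inclusion. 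All the content lies in the reverse inclusion, and since ${\rm NAH}\colon M_{\rm Dol}\to M_{\rm dR}$ is a bijection it is equivalent to the assertion that every $y\in G_\alpha^0$ with $x:={\rm NAH}(y)\in G_\alpha^1$ is itself a fixed point, $y\in P_\alpha$.

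To establish this, write $u_0:=\lim_{t\to0}t\cdot y=p_\alpha^0(y)\in P_\alpha$ for the Dolbeault limit and $u_1:=\lim_{t\to0}t\cdot x=p_\alpha^1(x)\in P_\alpha$ for the de Rham limit, the latter being $\big[{\rm Gr}_F(E),{\rm Gr}_F(\nabla)\big]$ by Theorem~\ref{thm4.3}(4). I would proceed in two steps. First, show that these two limits coincide, $u_0=u_1=:u$. Second, combine this with the transversality theorem of \cite{CW}, which for stable $u\in P_\alpha$ shows that ${\rm NAH}\big(G_\alpha^0(u)\big)$ meets $G_\alpha^1(u)$ transversely at ${\rm NAH}(u)$; since both submanifolds are affine (Bia{\l}ynicki-Birula theory and \cite[Corollary~1.5]{CW}) and of complementary real dimension in $M_{\rm dR}$, a transverse intersection is discrete and, via affineness, reduces to the single point ${\rm NAH}(u)$. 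Granting $u_0=u_1=u$, the point $x={\rm NAH}(y)$ then lies in $G_\alpha^1(u)\cap{\rm NAH}\big(G_\alpha^0(u)\big)=\{{\rm NAH}(u)\}$, so $x={\rm NAH}(u)$ and hence $y=u\in P_\alpha$, as required.

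The main obstacle is the first step, the coincidence $u_0=u_1$. A priori these limits are computed by genuinely different filtrations --- the Higgs (Harder--Narasimhan type) weight filtration governing the Dolbeault flow, versus the Griffiths-transverse Simpson filtration governing the de Rham flow --- and the two Bia{\l}ynicki-Birula stratifications are well known not to correspond under ${\rm NAH}$; the hypothesis only guarantees that $u_0$ and $u_1$ lie in the same component $P_\alpha$, not that they are equal. I expect that forcing $u_0=u_1$ requires a quantitative harmonic-analytic input: both $y$ and $x$ are incarnations of one harmonic bundle with a single pluri-harmonic metric $h$, and this metric drives both degenerations simultaneously through the family $M_{\rm Hod}$. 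The natural tool is a monotonicity along this family --- an energy or $S^1$-moment-map estimate --- whose equality case characterizes complex variations of Hodge structure; establishing such an estimate, and in particular excluding the scenario $u_0\neq u_1$ with both limits still in $P_\alpha$, is the crux and is presumably why the statement remains only conjectural. A secondary, more technical difficulty is the extension from stable fixed points $u$, to which \cite{CW} applies directly, to arbitrary polystable $u\in P_\alpha$, where the fibers $G_\alpha^0(u)$, $G_\alpha^1(u)$ and their intersection must be controlled by hand.
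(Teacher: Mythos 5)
This statement is not proved in the paper at all: it is an open conjecture (Simpson's guess in \cite{CS8}), recorded here without proof, so there is no argument of the author's to compare yours against --- and your attempt, as you yourself concede in the final paragraph, does not close it. Your easy inclusion $\big(G_\alpha^1\big)^{\rm VHS}\subseteq G_\alpha^1\cap{\rm NAH}\big(G_\alpha^0\big)$ is correct, and your reduction of the hard inclusion to the statement that any $y\in G_\alpha^0$ with ${\rm NAH}(y)\in G_\alpha^1$ lies in $P_\alpha$ is also correct. But both steps of your strategy for that reduction have genuine gaps. The first, $u_0=u_1$, you honestly flag as unproved, and it is: nothing in the paper or in \cite{CW,CS8} relates the Bia{\l}ynicki-Birula limit of $y$ in $M_{\rm Dol}$ to the Simpson-filtration limit of ${\rm NAH}(y)$ in $M_{\rm dR}$ beyond the case where $y$ is itself a fixed point, and the hypothesized monotonicity/moment-map estimate is not in the literature.

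The second step, however, fails even granting $u_0=u_1=u$ stable, and this is worth naming concretely. The result you invoke from \cite{CW} (quoted in Section~4.2 of the paper) asserts transversality of ${\rm NAH}\big(G_\alpha^0(u)\big)$ and $G_\alpha^1(u)$ \emph{only at the single point} ${\rm NAH}(u)$; a transversality statement at one known intersection point gives local isolation there and says nothing about other intersection points elsewhere. Your passage from ``transverse at the basepoint'' plus ``both fibers are affine'' to ``the intersection is exactly $\{{\rm NAH}(u)\}$'' has no support: the fibers are each biholomorphic to an affine space in its own complex structure, but they are not linear subspaces of a common vector space, so the familiar fact that complementary transverse affine subspaces meet once does not apply. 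Worse, ${\rm NAH}$ is only a $C^\infty$ (real-analytic) diffeomorphism over stable points, not holomorphic, so ${\rm NAH}\big(G_\alpha^0(u)\big)$ is merely a real submanifold of $M_{\rm dR}$ and complex-analytic intersection arguments are unavailable. The global uniqueness assertion $G_\alpha^1(u)\cap{\rm NAH}\big(G_\alpha^0(u)\big)=\{{\rm NAH}(u)\}$ that you need is essentially the fiberwise form of the conjecture itself, so your step~2 assumes the crux rather than proving it. In short: correct framing, two unclosed gaps, and the second is presented as if it followed from \cite{CW} when it does not.
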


For each polarized $\mathbb{C}$-VHS $(E,\nabla)$ such that the corresponding monodromy representation is irrreducible, then the Simpson filtration coincides with its Hodge filtration, this provides a~method to construct its Hodge filtration.

\vspace{-1.5mm}

\section[Twistor structures arising from non-Abelian Hodge correspondence]{Twistor structures arising from non-Abelian Hodge\\ correspondence}

Another important application of non-Abelian Hodge theory is the study of Hitchin's construction of twistor spaces for hyper-K\"ahler manifolds applied to the moduli spaces with hyper-K\"ahler structure (see Section~\ref{sec5.1}). This construction of twistor structures is interpreted by Deligne as a gluing of two moduli spaces, the resulting twistor space is shown being analytic isomorphic to the original one (see Section~\ref{sec5.2}). When we focus on compact Riemann surfaces, then Deligne's gluing can be generalized to a gluing of two moduli spaces induced from any element of the outer automorphism group of the fundamental group (see Section~\ref{sec5.3}).

In this section, we will introduce the twistor construction from Hitchin and Deligne's viewpoints. Then we will provide a new interpretation based on a recent work \cite{H}.

\subsection{Hitchin's twistor construction}\label{sec5.1}

In \cite{HKLR}, the authors provide a construction of twistor space for any hyper-K\"ahler manifold. Let~$M$ be a heper-K\"ahler manifold with three complex structures $(I,J, K=IJ)$. The stereographic projection $\mathbb{P}^1\to S^2$,
\begin{gather*}
\lambda=u+iv\mapsto\left(x=\frac{1-|\lambda|^2}{1+|\lambda|^2},\,y=\frac{2u}{1+|\lambda|^2},\, z=\frac{2v}{1+|\lambda|^2}\right)
\end{gather*}
defines a family of complex structures $I_\lambda:=xI+yJ+zK$ on~$M$.

The {\em twistor space} ${\rm TW}(M)$ of $M$ is a $C^\infty$ trivialization ${\rm TW}(M)\cong M\times\mathbb{P}^1$. $I_\lambda$ determines an almost complex structure $\mathcal{I}$ on ${\rm TW}(M)$, let $a=m\times\lambda\in{\rm TW}(M)$, then
\begin{gather*}
\mathcal{I}\colon \ T_a{\rm TW}(M)=T_mM\oplus T_\lambda\mathbb{P}^1\to T_mM\oplus T_\lambda\mathbb{P}^1
\end{gather*}
is given by $(I_\lambda,I_0)$, where $I_0\colon T_\lambda\mathbb{P}^1\to T_\lambda\mathbb{P}^1$ is the usual complex structure on $\mathbb{P}^1$ given by $I_0(v)=iv$. In fact, $\mathcal{I}$ is integrable~\cite{HKLR,Sal}, thus the twistor space~${\rm TW}(M)$ is a complex manifold of dimension $\dim_\mathbb{C}({\rm TW}(M))=1+\dim_\mathbb{C}(M)$.

The twistor space ${\rm TW}(M)$ has the following properties:
\begin{itemize}\itemsep=0pt
\item[(1)] the projection $\pi\colon {\rm TW}(M)\to\mathbb{P}^1$ is holomorphic;
\item[(2)] there is an antilinear involution $\sigma\colon {\rm TW}(M)\to{\rm TW}(M)$, $(m,\lambda)\mapsto\big(m,-\bar{\lambda}^{-1}\big)$, which covers the antipodal involution $\sigma_{\mathbb{P}^1}\colon \mathbb{P}^1\to\mathbb{P}^1$, $\lambda\mapsto-\bar{\lambda}^{-1}$, so it gives a real structure on~${\rm TW}(M)$;
\item[(3)] for any $m\in M$, the section $\{m\}\times\mathbb{P}^1\subset {\rm TW}(M)$ is holomorphic and $\sigma$-invariant, we call it a {\em preferred section}, in some papers like~\cite{HKLR,Sal}, it is called a {\em twistor line};
\item[(4)] weight 1 property: the normal bundle along any preferred section is isomorphic to \linebreak $\mathcal{O}_{\mathbb{P}^1}(1)^{\oplus\dim_\mathbb{C}(M)}$.
\end{itemize}

\begin{Proposition}\label{prop5.1}Preferred sections are $\sigma$-invariant. Moreover, locally, preferred sections are the only $\sigma$-invariant holomorphic sections.
\end{Proposition}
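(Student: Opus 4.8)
The plan is to prove the first assertion by a direct computation with the defining formula for $\sigma$, and the second by a dimension count on the germ of the space of holomorphic sections of $\pi$, using the weight~$1$ property together with the fact that $\sigma$ induces an antiholomorphic involution on that space. Throughout I write $n=\dim_{\mathbb C}(M)$, so that $M$ has real dimension $2n$.

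For the first assertion, I would regard a preferred section as the map $s_m\colon\mathbb P^1\to\mathrm{TW}(M)$, $\lambda\mapsto(m,\lambda)$. Since $\sigma$ covers $\sigma_{\mathbb P^1}$, the natural notion of $\sigma$-invariance for a section $s$ is $\sigma\circ s=s\circ\sigma_{\mathbb P^1}$. Evaluating both sides on $s_m$ I simply compute $\sigma(s_m(\lambda))=\sigma(m,\lambda)=\big(m,-\bar\lambda^{-1}\big)$ and $s_m(\sigma_{\mathbb P^1}(\lambda))=s_m\big(-\bar\lambda^{-1}\big)=\big(m,-\bar\lambda^{-1}\big)$, which agree; hence every preferred section is $\sigma$-invariant.

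For the second assertion, let $\mathrm{Sec}$ denote the germ at $s_m$ of the space of holomorphic sections of $\pi$. First I would invoke Kodaira's deformation theory: by the weight~$1$ property the normal bundle of a preferred section is $N\cong\mathcal O_{\mathbb P^1}(1)^{\oplus n}$, so $H^1(\mathbb P^1,N)=0$, deformations are unobstructed, and $\mathrm{Sec}$ is a smooth complex manifold with tangent space $H^0(\mathbb P^1,N)$ of complex dimension $h^0\big(\mathbb P^1,\mathcal O_{\mathbb P^1}(1)^{\oplus n}\big)=2n$. Next I would verify that $s\mapsto\sigma\circ s\circ\sigma_{\mathbb P^1}$ is a well-defined antiholomorphic involution $\tilde\sigma$ of $\mathrm{Sec}$: it produces holomorphic maps, being a composition antiholomorphic--holomorphic--antiholomorphic, and it covers $\mathrm{id}_{\mathbb P^1}$ because $\pi\circ\sigma=\sigma_{\mathbb P^1}\circ\pi$, so it really maps sections to sections. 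Its fixed locus $\mathrm{Fix}(\tilde\sigma)$ is precisely the set of $\sigma$-invariant holomorphic sections near $s_m$.

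The argument then closes by a dimension comparison. By the standard linearization of an antiholomorphic involution near a fixed point, $\mathrm{Fix}(\tilde\sigma)$ is a totally real submanifold of $\mathrm{Sec}$ of real dimension equal to $\dim_{\mathbb C}\mathrm{Sec}=2n$. On the other hand, $m\mapsto s_m$ is locally a real-analytic embedding of $M$ into $\mathrm{Sec}$: its differential sends $v\in T_mM$ to the class in $N_{s_m}$ of the variation field $(v,0)$, which is nonzero for $v\neq0$ since the tangent to $s_m$ is $\{0\}\oplus T_\lambda\mathbb P^1$. Thus the preferred sections form a real $2n$-dimensional submanifold of $\mathrm{Sec}$ contained, by the first assertion, in $\mathrm{Fix}(\tilde\sigma)$; a submanifold of the same dimension inside $\mathrm{Fix}(\tilde\sigma)$ is open in it, so near $s_m$ the two coincide, which is exactly the claim that locally the preferred sections exhaust the $\sigma$-invariant holomorphic sections. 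I expect the main obstacle to be this dimension matching: it rests entirely on the weight~$1$ property, which both forces smoothness of $\mathrm{Sec}$ through $H^1=0$ and pins down $\dim_{\mathbb C}\mathrm{Sec}=2n$, together with the fact that the fixed locus of an antiholomorphic involution has real dimension equal to the ambient complex dimension, so that the $2n$ real parameters carried by $M$ already fill out all of $\mathrm{Fix}(\tilde\sigma)$ and no extraneous invariant sections can appear.
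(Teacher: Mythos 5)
Your proposal is correct and follows essentially the same route as the paper, which proves the first assertion by the immediate computation with $\sigma(m,\lambda)=\big(m,-\bar\lambda^{-1}\big)$ and attributes the local uniqueness precisely to the weight~1 property of the normal bundle (citing Kaledin--Verbitsky for the standard deformation-theoretic argument). Your write-up simply supplies the details behind that citation -- smoothness of the section space via $H^1\big(\mathbb{P}^1,\mathcal{O}_{\mathbb{P}^1}(1)^{\oplus n}\big)=0$, the induced antiholomorphic involution on it, and the dimension count identifying the totally real fixed locus with the $2n$-real-parameter family of preferred sections -- all of which is sound.
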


If we denoted by ${\rm Pre}$ the set of preferred sections in the Douaby moduli space ${\rm Sec}$ of holomorphic sections. Then ``locally" means there exists an open neighborhood $U$ of ${\rm Pre}$ in ${\rm Sec}$ such that preferred sections are the only $\sigma$-invariant sections in $U$ \cite{KV}. To my knowledge, whether Proposition~\ref{prop5.1} holds globally is unknown:

\begin{Question}Are the preferred sections the only $\sigma$-invariant sections?
\end{Question}

Moreover, let $p\colon {\rm TW}(M)\to M$ be the natural projection, then the twistor space ${\rm TW}(M)$ admits a natural hermitian metric $g:=p^*g_M+\pi^*g_{\rm FS}$, where $g_M$ is the hyper-K\"ahler metric on~$M$ and $g_{\rm FS}$ is the Fubini--Study metric on $\mathbb{P}^1$. This metric makes the twistor space ${\rm TW}(M)$ into a {\em balanced manifold}~\cite{KV} (i.e., the associated fundamental form $\omega(\bullet,\bullet):=g(\mathcal{I}\bullet,\bullet)$ satisfies the weak closedness condition $d(\omega^{\dim_\mathbb{C}M})=0$).

Let $X$ be a complex projective variety, denote by $M_{\rm dR}^{\rm sm}(X,r)$ and $M_{\rm Dol}^{\rm sm}(X,r)$ the open subsets of smooth points of the moduli spaces $M_{\rm dR}(X,r)$ and $M_{\rm Dol}(X,r)$, respectively. From non-Abelian Hodge theory, we have $C^\infty$ isomorphism $M_{\rm dR}^{\rm sm}(X,r)\cong M_{\rm Dol}^{\rm sm}(X,r)$ coming from the homeomorphism between the underlying topological spaces \cite{CS4,CS5,CS6}. Let $M^{\rm sm}(X,r)$ be the underlying differentiable manifold. Then by the work of Hitchin (\cite{NH} for 1-dimensional base mani\-fold) and Fujiki~(\cite{Fuj} for higher-dimensional K\"ahler mani\-folds as the base), $M^{\rm sm}(X,r)$~carries two complex structures~$I$ and~$J$ from the complex manifolds $\big(M_{\rm Dol}^{\rm sm}(X,r),I\big)$ and \linebreak $\big(M_{\rm dR}^{\rm sm}(X,r),J\big)$. Moreover, $IJ=:K$ gives the third complex structure, which makes\linebreak $\big(M^{\rm sm}(X,r), I, J, K\big)$ has the structure of a hyper-K\"ahler manifold. Hence Hitchin's idea for constructing the twistor space of hyper-K\"ahler manifolds can be applied to~$M^{\rm sm}(X,r)$. Therefore, we obtain a twistor space for~$M^{\rm sm}(X,r)$, denoted as ${\rm TW}\big(M^{\rm sm}(X,r)\big)$, and called the {\em Hitchin twistor space}.

\subsection{Deligne's interpretation}\label{sec5.2}

Deligne's original aim was to understand the Hitchin twistor space ${\rm TW}\big(M^{\rm sm}(X,r)\big)$ via $\lambda$-con\-nec\-tions. The idea is gluing the Hodge moduli space $M_{\rm Hod}(X,r)$ over $X$ and the Hodge moduli space $M_{\rm Hod}\big(\bar{X},r\big)$ over its conjugate chart $\bar{X}$ to obtain a twistor space ${\rm TW}_{\rm DH}(X,r)$ that is analytic isomorphic to ${\rm TW}\big(M^{\rm sm}(X,r)\big)$. This was described and further studied by Simpson, he interpreted $M_{\rm Hod}(X,r)$ as the Hodge filtration on the non-Abelian de Rham cohomology~$M_{\rm dR}(X,r)$. He showed the Griffiths transversality and the regularity of the Gauss--Manin connection for this filtration~\cite{CS6}. Here we introduce their ideas based on Simpson's papers~\cite{CS6,CS7}.

Let $X$ be a complex projective variety with a fixed base point~$x$. The complex conjugation gives a map $\varphi\colon X^{\rm top}\to\bar{X}^{\rm top}$ between the underlying topological spaces, which induces an isomorphism
\begin{gather}\label{5.1}
\varphi_*\colon \ \pi_1(X,x)\to\pi_1\big(\bar{X},\bar{x}\big).
\end{gather}
As the notations used in the last section, let $M_{\rm Hod}(X,r)$ be the moduli space of semistable $\lambda$-flat bundles of rank $r$ with vanishing Chern classes, let $M_{\rm B}(X,\mathrm{GL}(r,\mathbb{C}))$ be the moduli space of representations. The Riemann--Hilbert correspondence gives the analytic isomorphism~\cite{CS4}:
\begin{gather}\label{5.2}
M_{\rm dR}(X,r)\cong M_{\rm B}(X,r).
\end{gather}
The $\mathbb{C}^*$-action on $M_{\rm Hod}(X,r)$ gives the algebraic isomorphism:
\begin{align}
M_{\rm Hod}(X,r)\times_{\mathbb{C}}\mathbb{C}^*&\cong M_{\rm dR}(X,r)\times\mathbb{C}^*,\nonumber\\
\big[E,\bar{\partial}_E,D^\lambda,\lambda\big]&\leftrightarrow \big(\big[E,\bar{\partial}_E,\lambda^{-1}D^\lambda\big],\lambda\big).\label{5.3}
\end{align}
The gluing isomorphism $\mathbf{d}_\varphi\colon M_{\rm Hod}(X,r)\times_\mathbb{C}\mathbb{C}^*\to M_{\rm Hod}(\bar{X},r)\times_\mathbb{C}\mathbb{C}^*$ is given as follows:

First by isomorphisms \eqref{5.1}--\eqref{5.3}, any point $\big[E,\bar{\partial}_E,D^\lambda,\lambda\big]$ in $M_{\rm Hod}(X,r)\times_{\mathbb{C}}\mathbb{C}^*$ determines a~representation $\rho\big(\lambda^{-1}D^\lambda\big)\comp\varphi_*^{-1}\colon \pi_1\big(\bar{X},\bar{x}\big)\to {\rm GL}(r,\mathbb{C})$, where $\rho\big(\lambda^{-1}D^\lambda\big)\colon \pi_1(X,x)\to {\rm GL}(r,\mathbb{C})$ is the monodromy corresponds to the flat connection $\lambda^{-1}D^\lambda$. Then by the conjugate version of isomorphisms~\eqref{5.2}, $\rho\big(\lambda^{-1}D^\lambda\big)\comp\varphi_*^{-1}$ corresponds to a flat bundle over~$\bar{X}$. Finally, the conjugate version of~\eqref{5.3} at the fiber $\lambda^{-1}$ gives a $\lambda^{-1}$-flat bundle over~$\bar{X}$. The obtained $\lambda^{-1}$-flat bundle over $\bar{X}$ is $\big[E,\lambda^{-1}D^\lambda,\lambda^{-1}\bar{\partial}_E,\lambda^{-1}\big]$. Therefore, the gluing isomorphism is
\begin{align*}
\mathbf{d}_\varphi\colon \ M_{\rm Hod}(X,r)\times_\mathbb{C}\mathbb{C}^*&\cong M_{\rm Hod}\big(\bar{X},r\big)\times_\mathbb{C}\mathbb{C}^*,\\
\big[E,\bar{\partial}_E,D^\lambda,\lambda\big]& \leftrightarrow\big[E,\lambda^{-1}D^\lambda,\lambda^{-1}\bar{\partial}_E,\lambda^{-1}\big],
\end{align*}
which covers the map $\mathbb{C}^*\to\mathbb{C}^*, \lambda\mapsto\lambda^{-1}$. Therefore, we obtained a space ${\rm TW}_{\rm DH}(X,r)$, called the {\em Deligne--Hitchin twistor space}, together with a fibration ${\rm TW}_{\rm DH}(X,r)\to\mathbb{P}^1$ which extends the projection $\pi\colon M_{\rm Hod}(X,r)\to\mathbb{C}$. The fibers of this fibration are $M_{\rm Dol}(X,r)$ at $\lambda=0$, $M_{\rm Dol}\big(\bar{X},r\big)$ at $\lambda=\infty$, and analytic isomorphic to $M_{\rm dR}(X,r)$ at $\lambda\neq0,\infty$.

Let $\big[E,\bar{\partial}_E,\theta,h\big]\in M_{\rm Dol}(X,r)$ be a harmonic Higgs bundle with the pluri-harmonic metric~$h$, then it determines a holomorphic section $p\colon \mathbb{P}^1\to{\rm TW}_{\rm DH}(X,r)$:
\begin{gather*}
\lambda\longmapsto\big[E,\bar{\partial}_E+\lambda\theta_h^\dagger,\lambda\partial_{E,h}+\theta,\lambda\big],
\end{gather*}
where $\theta_h^\dagger$ and $\partial_{E,h}$ are the unique operators determined by~$(\theta,h)$ and $\big(\bar{\partial}_E,h\big)$, respectively. This section is called a {\em preferred section}.

We can also define an antilinear involution $\sigma\colon {\rm TW}_{\rm DH}(X,r)\to{\rm TW}_{\rm DH}(X,r)$ that covers the antipodal involution $\sigma_{\mathbb{P}^1}\colon \mathbb{P}^1\to\mathbb{P}^1$. This map is defined by gluing the antiholomorphic ismorphisms $\sigma_{\rm Hod,X}\colon M_{\rm Hod}(X,r)\to M_{\rm Hod}\big(\bar{X},r\big)$ and $\sigma_{\rm Hod, \bar X}\colon M_{\rm Hod}\big(\bar{X},r\big)\to M_{\rm Hod}(X,r)$, where
\begin{align*}
\sigma_{\rm Hod,X}\colon \ M_{\rm Hod}(X,r)&\to M_{\rm Hod}\big(\bar{X},r\big),\\
\big[E,\bar{\partial}_E,D^\lambda,\lambda\big]& \mapsto\big[\bar{E}^*,\overline{\bar{\partial}_E}^*,\overline{D^\lambda}^*,-\bar{\lambda}\big].
\end{align*}
By the existence of pluri-harmonic metric for (poly-)stable $\lambda$-flat bundles with vanishing Chern classes (see Theorem~\ref{thm2.6}), take $h$ being the pluri-harmonic metric associated to $\big[E,\bar{\partial}_E,D^\lambda,\lambda\big]$. Then we have the isomorphism
\begin{gather*}
\big[\bar{E}^*,\overline{\bar{\partial}_E}^*, \overline{D^\lambda}^*,-\bar{\lambda}\big]\cong\big[E,\delta_h',-\delta_h'',-\bar{\lambda}\big],
\end{gather*}
where the operators $\delta_h'$ and $\delta_h''$ can be found in the second section.

Therefore, by means of pluri-harmonic metric, we can write the involution $\sigma$ as follows:
\begin{align*}
\sigma\colon \ {\rm TW}_{\rm DH}(X,r)&\to{\rm TW}_{\rm DH}(X,r),\\
\big[E,\bar{\partial}_E,D^\lambda,\lambda\big]&\mapsto \big[E,\bar{\lambda}^{-1}\delta_h'',-\bar{\lambda}^{-1}\delta_h',-\bar{\lambda}^{-1}\big].
\end{align*}
In fact, $\sigma$ is the product of the following 3 involutions~\cite{CS7}:
\begin{itemize}\itemsep=0pt
\item[(1)] an antiholomorphic involution
\begin{align*}
C\colon \ {\rm TW}_{\rm DH}(X,r)&\to{\rm TW}_{\rm DH}(X,r),\\
\big[E,\bar{\partial}_E,D^\lambda,\lambda\big]&\mapsto \big[E,\bar{\lambda}^{-1}\overline{D^\lambda}, \bar{\lambda}^{-1}\overline{\bar{\partial}_E},\bar{\lambda}^{-1}\big]
\end{align*}
obtained by gluing complex conjugations of $\lambda$-flat bundles;
\item[(2)] a holomorphic involution
\begin{align*}
D\colon \ {\rm TW}_{\rm DH}(X,r)&\to{\rm TW}_{\rm DH}(X,r),\\
\big[E,\bar{\partial}_E,D^\lambda,\lambda\big]& \mapsto\big[E^*,\big(\bar{\partial}_E\big)^*,\big(D^\lambda\big)^*,\lambda\big]
\end{align*}
obtained by gluing duals of $\lambda$-flat bundles;
\item[(3)] a holomorphic involution
\begin{align*}
N\colon \ {\rm TW}_{\rm DH}(X,r)&\to{\rm TW}_{\rm DH}(X,r),\\
\big[E,\bar{\partial}_E,D^\lambda,\lambda\big]&\mapsto\big[E,\bar{\partial}_E,-D^\lambda,-\lambda\big]
\end{align*}
obtained by $-1\in\mathbb{C}^*$ acts on ${\rm TW}_{\rm DH}(X,r)$.
\end{itemize}

By definition, any point in the twistor space determines a unique preferred section. Therefore, the set of preferred sections gives a homeomorphism
\begin{gather*}
{\rm TW}_{\rm DH}(X,r)\cong M_{\rm Dol}(X,r)\times\mathbb{P}^1,
\end{gather*}
which is a $C^\infty$ isomorphism over smooth points \cite{CS6}:
\begin{gather*}
{\rm TW}_{\rm DH}^{\rm sm}(X,r)\cong M_{\rm Dol}^{\rm sm}(X,r)\times\mathbb{P}^1.
\end{gather*}

\begin{Proposition}\label{prop5.3}Preferred sections are $\sigma$-invariant. Moreover, locally, preferred sections are the only $\sigma$-invariant holomorphic sections.
\end{Proposition}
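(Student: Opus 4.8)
The plan is to establish the two assertions separately: the first is a direct substitution into the defining formulas for $\sigma$ and for a preferred section, while the second is a dimension count on the Douady space of sections built on the weight~$1$ property, exactly parallel to Proposition~\ref{prop5.1}.

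For the $\sigma$-invariance, write $s(\lambda)=\big[E,\bar{\partial}_E+\lambda\theta_h^\dagger,\lambda\partial_{E,h}+\theta,\lambda\big]$ for the preferred section attached to a harmonic Higgs bundle $\big(E,\bar{\partial}_E,\theta,h\big)$. Comparing with the decomposition $\mathbb{D}^\lambda=\lambda\partial_h+\theta_h+\bar{\partial}_h+\lambda\theta_h^\dagger$ from Section~\ref{sec2.1}, this $\lambda$-flat bundle has $\bar{\partial}_h=\bar{\partial}_E$, $\theta_h=\theta$, $\partial_h=\partial_{E,h}$, so its auxiliary operators are $\delta_h'=\partial_{E,h}-\bar{\lambda}\theta$ and $\delta_h''=\bar{\lambda}\bar{\partial}_E-\theta_h^\dagger$; moreover $h$ is exactly the pluri-harmonic metric of $s(\lambda)$ for every $\lambda$, so the formula for $\sigma$ applies with this $h$. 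Substituting gives
\begin{gather*}
\sigma(s(\lambda))=\big[E,\bar{\lambda}^{-1}\delta_h'',-\bar{\lambda}^{-1}\delta_h',-\bar{\lambda}^{-1}\big]=\big[E,\bar{\partial}_E-\bar{\lambda}^{-1}\theta_h^\dagger,\theta-\bar{\lambda}^{-1}\partial_{E,h},-\bar{\lambda}^{-1}\big],
\end{gather*}
which is precisely $s\big(-\bar{\lambda}^{-1}\big)=s(\sigma_{\mathbb{P}^1}(\lambda))$. Hence $\sigma\circ s=s\circ\sigma_{\mathbb{P}^1}$, so every preferred section is $\sigma$-invariant.

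For the local uniqueness the key input is the weight~$1$ property: since ${\rm TW}_{\rm DH}(X,r)$ is analytic isomorphic to the Hitchin twistor space ${\rm TW}\big(M^{\rm sm}(X,r)\big)$ over the smooth locus, with preferred sections matching preferred sections, the normal bundle of a preferred section $s$ is $N_s\cong\mathcal{O}_{\mathbb{P}^1}(1)^{\oplus n}$ where $n=\dim_\mathbb{C}M^{\rm sm}(X,r)$. Consequently $H^1\big(\mathbb{P}^1,N_s\big)=0$, so the Douady space ${\rm Sec}$ is smooth at $s$ with tangent space $H^0\big(\mathbb{P}^1,N_s\big)\cong\mathbb{C}^{2n}$. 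Because $\sigma$ is an antiholomorphic involution with $\sigma^2={\rm id}$ covering $\sigma_{\mathbb{P}^1}$, it induces an antiholomorphic involution $\sigma_*$ of ${\rm Sec}$ fixing each preferred section, and its differential $\tilde\sigma$ at $s$ is an antilinear involution of $\mathbb{C}^{2n}$, hence a real structure whose fixed locus has real dimension $2n$. Thus the $\sigma$-invariant sections form, near $s$, a real-analytic submanifold of real dimension $2n$. Since the preferred sections are $\sigma$-invariant and are parametrized by $M^{\rm sm}(X,r)$, they sweep out a smooth family of the same dimension $2n$ inside this fixed locus; two smooth submanifolds of equal dimension, one containing the other, coincide in a neighborhood, giving the local uniqueness.

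The hardest part is the identification of $\tilde\sigma$ on $H^0\big(\mathbb{P}^1,N_s\big)$ with the natural real structure of $\mathcal{O}_{\mathbb{P}^1}(1)^{\oplus n}$ induced by the antipodal map, so that $\tilde\sigma$ is a genuine real structure cutting out a fixed locus of exactly half the real dimension. This rests on $\sigma^2={\rm id}$ holding globally and on carefully transporting the weight~$1$ structure across the analytic isomorphism with the Hitchin twistor space; once this compatibility of real structures is in place, the remaining dimension bookkeeping is routine.
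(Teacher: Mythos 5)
Your proof is correct and takes essentially the same route as the paper, which establishes $\sigma$-invariance by exactly this direct substitution of $\delta_h'=\partial_{E,h}-\bar{\lambda}\theta$ and $\delta_h''=\bar{\lambda}\bar{\partial}_E-\theta_h^\dagger$ into the formula for $\sigma$, and deduces local uniqueness from the weight~1 property $N_s\cong\mathcal{O}_{\mathbb{P}^1}(1)^{\oplus\dim_\mathbb{C}M_{\rm dR}(X,r)}$ via the Douady-space dimension count that the paper attributes to Kaledin--Verbitsky. Your closing worry is actually unnecessary: since $\sigma_*^2=\mathrm{id}$, the differential $\tilde\sigma$ is automatically an antilinear involution of $T_s{\rm Sec}\cong\mathbb{C}^{2n}$, hence a real structure whose fixed locus has real dimension $2n$ for purely linear-algebraic reasons, so no identification with any preferred real form of $\mathcal{O}_{\mathbb{P}^1}(1)^{\oplus n}$ is needed.
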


Preferred sections are invariant under the antilinear involution $\sigma$ is easy to see. Here ``locally'' means that for any given preferred section $p\colon \mathbb{P}^1\to{\rm TW}_{\rm DH}(X,r)$, there exists an open neighbourhood $U\subseteq{\rm TW}_{\rm DH}(X,r)$ of $p$ such that preferred sections are the only $\sigma$-invariant sections in~$U$. This is true because the normal bundle along any preferred section is isomorphic to $\big(\mathcal{O}_{\mathbb{P}^1}(1)\big)^{\oplus\dim_\mathbb{C}M_{\rm dR}(X,r)}$.

In \cite{CS6}, Simpson asked the following question:

\begin{Question}Does Proposition~{\rm \ref{prop5.3}} hold globally? That is, are the preferred sections the only $\sigma$-invariant sections?
\end{Question}

In \cite{CS6,CS7}, Simpson showed this question is true for twistor space of rank~1 bundles. Recently, in~\cite{BHR}, the authors constructed holomorphic $\sigma$-invariant but not preferred sections for twistor space of rank~2 bundles over compact Riemann surface of~$g\geq2$.

Now we give a brief conclusion on above two different kinds of constructing the twistor spaces. In Hitchin's constructing, we first have a hyper-K\"ahler manifold (the moduli spaces are hyper-K\"ahler) $M^{\rm sm}(X,r)$, then the twistor space has a naturally structure of $M^{\rm sm}(X,r)\times\mathbb{P}^1$. The complex structure is induced from a family of complex structures (given by the quaternionic structure) on $M^{\rm sm}(X,r)$ and the natural complex structure on $\mathbb{P}^1$. While from Deligne's interpretation, the twistor space is obtained by gluing the moduli spaces $M_{\rm Hod}(X,r)$ over $X$ and $M_{\rm Hod}\big(\bar{X},r\big)$ over the conjugate $\bar{X}$. This gluing is obtained via the algebraic isomorphism between $M_{\rm Hod}^{\lambda}$ ($\lambda\neq0$) and $M_{\rm dR}$, the isomorphism of fundamental groups induced from the complex conjugate map $X\to\bar{X}$, and the analytic isomorphism $M_{\rm dR}\cong M_{\rm B}$ given by the Riemann--Hilbert correspondence. Preferred sections give the twistor space the product structure, and the weight 1 property implies the quaternionic structure. This viewpoint shows that a quaternionic structure on $M^{\rm sm}(X,r) $ is equivalent to the weight~1 property of a preferred section~\cite{CS7}.

Moreover, the twistor spaces arising from these two different methods are analytic isomorphic:

\begin{Theorem}[\cite{CS6}]The twistor space ${\rm TW}^{\rm sm}_{\rm DH}(X,r)$ is analytic isomorphic to ${\rm TW}(M^{\rm sm}(X,r))$.
\end{Theorem}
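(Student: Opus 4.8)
The plan is to produce an explicit biholomorphism by comparing the two complex structures on one common $C^\infty$ model. Both twistor spaces fiber holomorphically over $\mathbb{P}^1$ and admit a $C^\infty$ trivialization by $M^{\rm sm}(X,r)\times\mathbb{P}^1$: on the Hitchin side this is tautological, while on the Deligne--Hitchin side it is furnished by the preferred sections, which give the homeomorphism ${\rm TW}_{\rm DH}(X,r)\cong M_{\rm Dol}(X,r)\times\mathbb{P}^1$, a $C^\infty$ isomorphism over the smooth loci. Denote by $\Psi\colon M^{\rm sm}(X,r)\times\mathbb{P}^1\to{\rm TW}^{\rm sm}_{\rm DH}(X,r)$ the resulting diffeomorphism, sending $(m,\lambda)$ --- with $m$ a smooth harmonic bundle $(E,\bar\partial_E,\theta,h)$ --- to the value $p_m(\lambda)=[E,\bar\partial_E+\lambda\theta_h^\dagger,\lambda\partial_{E,h}+\theta,\lambda]$ of its preferred section. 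By construction $\Psi$ covers the identity on $\mathbb{P}^1$. First I would reduce the theorem to showing that $\Psi$ is holomorphic, i.e.\ that its differential intertwines the Hitchin almost complex structure $\mathcal{I}$, given fiberwise by $(I_\lambda,I_0)$, with the native complex structure of ${\rm TW}_{\rm DH}(X,r)$.

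Because both projections to $\mathbb{P}^1$ are holomorphic and $\Psi$ covers $\mathrm{id}_{\mathbb{P}^1}$, holomorphicity follows once we check it along the preferred sections and along the fibers. The preferred sections $p_m$ are holomorphic sections of ${\rm TW}_{\rm DH}(X,r)$, which settles the $\partial_\lambda$-direction. It then remains to verify, at each fixed $\lambda\in\mathbb{P}^1$, that the fiber differential $d_mp_\bullet(\lambda)\colon T_mM^{\rm sm}(X,r)\to T_{p_m(\lambda)}M^\lambda_{\rm Hod}(X,r)$ is complex linear when the source carries $I_\lambda$ and the target carries the complex structure of the fiber $M^\lambda_{\rm Hod}$. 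Identify $T_mM^{\rm sm}(X,r)$ with the first hypercohomology of the deformation complex of the harmonic bundle, represented by harmonic pairs $(a,\varphi)$ with $a\in\mathcal{A}^{0,1}(X,\operatorname{End}(E))$ deforming $\bar\partial_E$ and $\varphi\in\mathcal{A}^{1,0}(X,\operatorname{End}(E))$ deforming $\theta$. On this model the Dolbeault structure acts by $I(a,\varphi)=(ia,i\varphi)$, and the harmonic metric $h$ lets one write the de Rham structure $J$ and $K=IJ$ explicitly; under the stereographic parametrization $\lambda=u+iv\mapsto(x,y,z)$ one has $I_\lambda=xI+yJ+zK$.

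The crux is the infinitesimal matching: differentiating the explicit formula $p_m(\lambda)=[E,\bar\partial_E+\lambda\theta_h^\dagger,\lambda\partial_{E,h}+\theta,\lambda]$ in the directions $(a,\varphi)$ and reading off the induced deformation of the $\lambda$-flat bundle, one must show that the resulting complex structure on $T_{p_m(\lambda)}M^\lambda_{\rm Hod}$ pulls back under $d_mp_\bullet(\lambda)$ to exactly $xI+yJ+zK$. This is the computation carried out by Simpson \cite{CS6,CS7}, and it is where the adjoint operators $\theta_h^\dagger,\partial_{E,h}$ and the specific $\lambda$-weights in the preferred section enter decisively; any discrepancy in normalization would spoil the match with the stereographic coordinates, so the step must be done carefully and verified uniformly in $\lambda$. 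I expect this to be the main obstacle.

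Finally I would treat the two coordinate charts of $\mathbb{P}^1$ and the distinguished fibers. Over the chart at $\infty$ the preferred section is expressed through the gluing isomorphism $\mathbf{d}_\varphi\colon M_{\rm Hod}(X,r)\times_{\mathbb{C}}\mathbb{C}^*\to M_{\rm Hod}(\bar X,r)\times_{\mathbb{C}}\mathbb{C}^*$, and one checks that the transition it induces agrees with the change of trivialization of the Hitchin twistor space across $\lambda$ and $-\bar\lambda^{-1}$; compatibility of the antilinear involution $\sigma$ on ${\rm TW}_{\rm DH}(X,r)$ with the antipodal map then matches the real structures. The special fibers are consistent: $\lambda=0$ gives $M_{\rm Dol}(X,r)=(M^{\rm sm},I)$ since $I_0=I$, and $\lambda=\infty$ gives $M_{\rm Dol}(\bar X,r)=(M^{\rm sm},-I)$ since $I_\infty=-I$, with $\lambda=1$ recovering $M_{\rm dR}(X,r)=(M^{\rm sm},J)$ as $I_1=J$. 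As an independent confirmation one may instead invoke the reconstruction of \cite{HKLR}: the $\sigma$-invariant preferred sections have weight-$1$ normal bundle $\mathcal{O}_{\mathbb{P}^1}(1)^{\oplus\dim_{\mathbb{C}}M_{\rm dR}}$, so ${\rm TW}_{\rm DH}(X,r)$ satisfies the axioms of an abstract twistor space whose associated hyper-K\"ahler manifold is $M^{\rm sm}(X,r)$, and the reconstruction theorem identifies it analytically with ${\rm TW}(M^{\rm sm}(X,r))$.
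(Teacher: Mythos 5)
Note first that the paper itself contains no proof of this statement: it is a survey item quoted directly from Simpson \cite{CS6}, so your proposal can only be measured against Simpson's argument, which your outline in fact reproduces. Your reduction step is sound: since both spaces fiber holomorphically over $\mathbb{P}^1$, since the complex structure of ${\rm TW}_{\rm DH}(X,r)$ preserves both the tangent space to the fiber and the tangent line to the (holomorphic) preferred section through a given point, and since $d\Psi$ is block-diagonal for these two decompositions, complex-linearity of $d\Psi$ does follow pointwise from holomorphicity of the preferred sections plus fiberwise complex-linearity --- no separate-holomorphicity subtleties arise. The chart bookkeeping is also essentially right, up to one slip: the chart transition of the Hitchin twistor space is over $\lambda\mapsto\lambda^{-1}$, matching the Deligne gluing $\mathbf{d}_\varphi$, whereas $\lambda\mapsto-\bar{\lambda}^{-1}$ is the antipodal map covered by $\sigma$ and enters only when matching real structures.

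The genuine gap is the one you flag yourself: the crux --- that for fixed $\lambda$ the differential of $m\mapsto\big[E,\bar{\partial}_E+\lambda\theta_h^\dagger,\lambda\partial_{E,h}+\theta,\lambda\big]$ intertwines $I_\lambda=xI+yJ+zK$ with the complex structure of the fiber $M^\lambda_{\rm Hod}$ --- is not carried out but cited back to \cite{CS6,CS7}. Since that computation \emph{is} the theorem (everything else is framing), deferring it to the very reference being proved is circular as a standalone proof; to close it you would have to write the quaternionic action of $I$, $J$, $K$ on harmonic representatives $(a,\varphi)$ explicitly via $h$ and verify, uniformly in $\lambda$, that $I_\lambda(a,\varphi)$ is sent to $i$ times the induced deformation of the pair $(d_E'',d_E')$. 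Your fallback via the reconstruction theorem of \cite{HKLR} is a legitimate alternative route but is stated too loosely to substitute for this: besides the weight-one normal bundle and the real structure, the reconstruction requires a fiberwise holomorphic symplectic form with values in $\mathcal{O}_{\mathbb{P}^1}(2)$, which you never verify; and since preferred sections need not exhaust the $\sigma$-invariant sections globally (only the local uniqueness of Proposition~\ref{prop5.3} is known, and \cite{BHR} produces non-preferred real sections in rank~$2$), you must apply the reconstruction to the component of preferred sections and then still identify the induced complex structures on the parameter space with $I$ and $J$ by evaluation at $\lambda=0,1$ --- at which point you are redoing a version of the same matching computation rather than obtaining an independent confirmation.
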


\subsection{A new interpretation}\label{sec5.3}

In \cite{H}, we concentrated on the twistor space of Riemann surface case, we generalized Deligne's construction for any element~$\gamma$ of the outer automorphism group of the fundamental group of the Riemann surface to obtain the $\gamma$-twistor space, we give a brief introduction here (for more details and proofs, see~\cite{H}).

Let $X$ be a compact Riemann surface of genus $g\geq2$ and let~$\mathcal{X}$ be the underlying smooth surface, so $X$ can be wrote as $X=(\mathcal{X},I)$ for a complex structure $I$ that defines the Riemann surface structure of $X$. The fundamental group of~$\mathcal{X}$ is given by
\begin{gather*}
\pi_1(\mathcal{X})=\bigg\langle\alpha_1,\beta_1,\dots,\alpha_g,\beta_g\colon \prod_{i=1}^g\alpha_i\beta_i\alpha_i^{-1}\beta_i^{-1}=1\bigg\rangle.
\end{gather*}

Take $G=\mathrm{GL}(r,\mathbb{C})$ and let $M_{\mathrm{B}}(\mathcal{X},G)$ be the coarse moduli space of rank $r$ representations of~$\pi_1(\mathcal{X})$ into~$G$.

When $\lambda\neq 0$, we have the analytic isomorphism
\begin{gather*}
\mu_\lambda\colon \ M^\lambda_{{\rm Hod}}(X)\xlongrightarrow{\cong} M_\mathrm{B}(\mathcal{X},G)
\end{gather*}
given by the composition of the algebraic isomorphism $M^\lambda_{{\rm Hod}}(X)\cong M_{\mathrm{dR}}(X,r)$ and the Riemann--Hilbert correspondence $M_{\mathrm{dR}}(X,r)\cong M_{\mathrm{B}}(\mathcal{X},G)$, where the first algebraic isomorphism is given
by rescaling the twistor parameter $\lambda$ (does not change the holomorphic structures of the underlying bundles).

The outer automorphism group
\begin{gather*}
\Gamma_\mathcal{X}:=\mathrm{Out}(\pi_1(\mathcal{X})) =\operatorname{Aut}(\pi_1(\mathcal{X}))/\mathrm{Inn}(\pi_1(\mathcal{X}))
\end{gather*}
 acts on $M_\mathrm{B}(\mathcal{X},G)$. It is known that $\Gamma_\mathcal{X}$ is isomorphic to the extended mapping class group
 \begin{gather*}
 {\rm Mod}^\diamondsuit(\mathcal{X}):=\pi_0(\mathrm{Diff}(\mathcal{X})) =\mathrm{Diff}(\mathcal{X})/\mathrm{Diff}_0(\mathcal{X})
 \end{gather*}
 that acts on the Teichm\"{u}ller space ${\rm Teich}(\mathcal{X})$ of $\mathcal{X}$. To define the Deligne gluing, we first define an action of $\Gamma_\mathcal{X}$ on ${\rm Teich}(\mathcal{X})\times M_\mathrm{B}(\mathcal{X},G) $ as follows: for $f\in \mathrm{Diff}(\mathcal{X})$
 such that the equivalence class $[f]\in{\rm Mod}^\diamondsuit(\mathcal{X})$ is nontrivial, there is a induced isomorphism
 \begin{gather*}
 f_*\colon \ \pi_1(\mathcal{X},x)\rightarrow \pi_1(\mathcal{X},f(x))
 \end{gather*}
 for $x\in \mathcal{X}$
 such that $[f_*]\in \Gamma_\mathcal{X}$ is nontrivial, the action of~$f$ on ${\rm Teich}(\mathcal{X})$ maps $X=(\mathcal{X},I)$ to $X^\prime=(\mathcal{X},I^\prime)$ where $I$ is a complex structure on $\mathcal{X}$ and $I^\prime $ is another complex structure induced by $f$, and the action of $f_*$ on $M_\mathrm{B}(\mathcal{X},G)$ maps a representation $\rho\colon \pi_1(\mathcal{X},x)\to G$ to another representation $\rho'=\rho\circ f_*^{-1}\colon \pi_1(\mathcal{X},f(x))\to G$, thus the action of $\Gamma_\mathcal{X}$ on ${\rm Teich}(\mathcal{X})\times M_\mathrm{B}(\mathcal{X},G) $ sends $(X,\rho)$ to $(X',\rho')$.

Choose $\gamma\in \Gamma_\mathcal{X}$. For $\big[E,\bar{\partial}_E,D^\lambda,\lambda\big]\in M_{{\rm Hod}}(X,r)\times_{\mathbb{C}}\mathbb{C}^*$, we have an $(X,\rho)\in {\rm Teich}(\mathcal{X})\times M_\mathrm{B}(\mathcal{X},G)$ via the isomorphism $\mu_\lambda$, then the $\gamma$-action maps $(X,\rho)$ to $(X',\rho')$ which corresponds to some $\big[E',\bar{\partial}_{E'},D^{'\lambda^{-1}},\lambda^{-1}\big]\in M_{{\rm Hod}}(X', r)\times_{\mathbb{C}}\mathbb{C}^*$ via $\mu_\lambda^{-1}$ “at the point $\lambda^{-1}\in\mathbb{C}^*$.

Therefore, we define an analytic isomorphism called the Deligne isomorphsim
\begin{gather*}
\mathbf{d}_\gamma\colon \ M_{{\rm Hod}}(X,r)\times_{\mathbb{C}}\mathbb{C}^*\rightarrow M_{{\rm Hod}}\big(X^\prime,r\big)\times_{\mathbb{C}}\mathbb{C}^*
\end{gather*}
that covers the map $\mathbb{C}^*\to\mathbb{C}^*$, $\lambda\mapsto \lambda^{-1}$. Now we can use this isomorphism $d_\gamma$ to glue together two analytic spaces $M_{{\rm Hod}}(X,r)$ and $M_{{\rm Hod}}(X',r)$ along their open sets. The resulting space is denoted by $\mathrm{TW}^\gamma(X,r)$, called the {\em $\gamma$-twistor space}, it has a fibration ${\rm TW}^\gamma(X,r)\to\mathbb{P}^1$. Obviously, this construction is independent of the choice of representative of $\gamma$ up to isomorphism. In conclusion, our construction is along the following diagram:
\[\xymatrix{
\mathcal{M}_{\rm Hod}(X,r)|_{\lambda}\ (\lambda\in\mathbb{C}^*) \ar[d]^{ \cong_{\rm alg}} \ar@{-->}[rr]^{\cong_{\rm an} \ \mathbf{d}_\gamma} &&\mathcal{M}_{\rm Hod}(X',r)|_{\lambda^{-1}}\ (\lambda\in\mathbb{C}^*) \\
\mathcal{M}_{\rm dR}(X,r)\ar[d]^{\cong_{\rm an}} &&\mathcal{M}_{\rm dR}(X',r)\ar[u]_{\cong_{\rm alg}}\\
\mathcal{M}_{\rm B}(\mathcal{X},r)\ar@{=}^{\bullet\comp f_*^{-1}}[rr]&&\mathcal{M}_{\rm B}(\mathcal{X},r).\ar[u]_{\cong_{\rm an}}}
\]

Taking $X=(\mathcal{X},I)$ and $X'=(\mathcal{X},-I)$, let $\varphi\colon X\to X', x\mapsto\bar{x}$ be the complex conjugate map, and $\varphi'\colon \mathcal{X}\to\mathcal{X}$ be the associated map on underlying smooth surface. Then $\varphi'$ is not an identity map and lies in a non-trivial class of ${\rm Mod}^\diamondsuit(\mathcal{X})$. This induces an isomorphism $\varphi_*\colon \pi_1(\mathcal{X},x)\to\pi_1(\mathcal{X},\varphi'(x))$, which lies in a non-trivial class of the outer automorphism group $\mathrm{Out}(\pi_1(\mathcal{X}))$. Therefore, the Deligne--Hitchin twistor space $\mathrm{TW}_{\mathrm{DH}}(X,r)$ can be thought as a~special $\gamma$-twistor space.

Fix a point $\big[E,\bar{\partial}_E,D^{\lambda_0},\lambda_0\big]\in M_{\rm Hod}(X,r)\times_{\mathbb{C}}\mathbb{C}^*$ for some fixed $\lambda_0\in\mathbb{C}^*$, then $u$ determines a~holomorphic section $s_{\lambda_0}\colon \mathbb{C}^*\to M_{\rm Hod}(X,r)\times_{\mathbb{C}}\mathbb{C}^*$ as follows:
\begin{gather*}
\lambda\longmapsto\big[E,\bar{\partial}_E,\lambda\lambda_0^{-1}D^{\lambda_0},\lambda\big].
\end{gather*}
This section can be extended to a~holomorphic section $\mathbb{P}^1\to{\rm TW}^\gamma(X,r)$ of the $\gamma$-twistor space by Simpson's Theorem~\ref{thm4.3} (see also~\cite{H}), the extended section is still denoted as $s_{\lambda_0}$, called the {\em de Rham section} of~${\rm TW}^\gamma(X,r)$.

The following property is obtained in~\cite{H}, where the Torelli-type theorem for the $\gamma$-twistor space is obtained by applying the techniques in~\cite{BGHL}, where the authors obtained the property for the Deligne--Hitchin twistor space.

\begin{Theorem}\quad
\begin{enumerate}\itemsep=0pt
\item[$(1)$] The de Rham section $s_{\lambda_0}$ also has the weight~$1$ property, that is, we have the following isomorphism:
\begin{align*}
N_{s_{\lambda_0}}\simeq\mathcal{O}_{\mathbb{P}^1}(1)^{\oplus\dim_{\mathbb{C}}(M_{\rm dR}(X, r))},
\end{align*}
where $N_{s_{\lambda_0}}$ denotes the normal bundle.
In particular, the $\gamma$-twistor space $\mathrm{TW}^\gamma(X,r)$ contains ample rational curves.
\item[$(2)$] $($Torelli-type theorem$)$ Let $X$, $Y$ be two compact Riemann surfaces with genus $g\geq3$. If we have the analytical isomorphism of $\gamma$-twistor spaces $\mathrm{TW}^\gamma(X,r)\cong\mathrm{TW}^\gamma(Y,r)$, then as Riemann surfaces, either $X\cong Y$, or $X\cong Y'$, where $Y'$ is the Riemann surface determined by $Y$ and $\gamma$.
\end{enumerate}
\end{Theorem}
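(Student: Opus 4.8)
The plan is to establish the two parts in turn, deriving the Torelli statement~(2) from the ample rational curves produced in~(1) together with the known Torelli theorem for the Dolbeault moduli space. For part~(1) I would begin from the general fact that, for a holomorphic section of the fibration $\pi\colon \mathrm{TW}^\gamma(X,r)\to\mathbb{P}^1$, the normal bundle is canonically the pullback $s_{\lambda_0}^{*}T_\pi$ of the relative tangent bundle; by Grothendieck's theorem $N_{s_{\lambda_0}}\cong\bigoplus_{i=1}^{n}\mathcal{O}_{\mathbb{P}^1}(a_i)$ with $n=\dim_{\mathbb{C}}M_{\rm dR}(X,r)$, so the task is to determine the $a_i$. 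I would cover $\mathbb{P}^1$ by the two charts underlying the Deligne gluing: $U_0$ modelled on $M_{\rm Hod}(X,r)$ near $\lambda=0$ and $U_\infty$ modelled on $M_{\rm Hod}(X',r)$ near $\lambda=\infty$, overlapping on $\mathbb{C}^{*}$. Over each chart the deformation theory of $\lambda$-flat bundles supplies a holomorphic frame of $T_\pi$ along the section, with fibre the first hypercohomology of $\mathrm{End}(E)\xrightarrow{D^\lambda}\mathrm{End}(E)\otimes\Omega^1_X$, extending across the corresponding special fibre $\lambda=0$, respectively $\lambda=\infty$.

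The heart of the matter is the transition matrix $g(\lambda)$ relating these two frames on the overlap $\mathbb{C}^{*}$; it is the vertical differential of the gluing $\mathbf{d}_\gamma$. Recall that $\mathbf{d}_\gamma$ factors as the rescaling isomorphism $M^{\lambda}_{\rm Hod}(X,r)\cong M_{\rm dR}(X,r)$, followed by the Riemann--Hilbert correspondence and the mapping-class-group action on the Betti space $M_{\rm B}(\mathcal{X},\mathrm{GL}(r,\mathbb{C}))$, followed by the inverse rescaling $M_{\rm dR}(X',r)\cong M^{\lambda^{-1}}_{\rm Hod}(X',r)$. The Betti part is independent of $\lambda$, so the entire $\lambda$-dependence of $g(\lambda)$ comes from the two rescalings, whose differentials scale the hypercohomology classes. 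I would compute this scaling and show that, after passing to the two frames, $g(\lambda)$ is gauge-equivalent over $\mathbb{C}^{*}$ to $\lambda\cdot\mathrm{Id}$; the clutching construction then yields $N_{s_{\lambda_0}}\cong\mathcal{O}_{\mathbb{P}^1}(1)^{\oplus n}$, which is the weight~$1$ property, and ampleness follows because a rational curve with normal bundle $\mathcal{O}_{\mathbb{P}^1}(1)^{\oplus n}$ is ample. I expect the main obstacle to be verifying that the net power of $\lambda$ in $g(\lambda)$ is exactly one and uniform in all directions: this is precisely where the shape of the Deligne gluing enters and is what excludes an a priori unbalanced splitting. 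The computation parallels the one for preferred sections in the Deligne--Hitchin case, so I would import the bookkeeping of~\cite{BGHL}, the only change being the replacement of complex conjugation by a general $\gamma$.

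For part~(2) the strategy is to reconstruct the fibration intrinsically and then invoke Torelli for the Dolbeault space. Given an analytic isomorphism $F\colon \mathrm{TW}^\gamma(X,r)\to\mathrm{TW}^\gamma(Y,r)$, I would first argue that $F$ carries the weight~$1$ rational curves of part~(1) to weight~$1$ rational curves; since these are sections of the respective projections and their family, together with its incidence relations, determines the map to $\mathbb{P}^1$, the isomorphism descends to an automorphism $\bar{F}\colon \mathbb{P}^1\to\mathbb{P}^1$ intertwining the two fibrations. The fibres over $\lambda=0,\infty$, being Dolbeault moduli spaces rather than the de Rham spaces appearing over $\mathbb{C}^{*}$, are distinguished biholomorphically, so $\bar{F}$ preserves $\{0,\infty\}$. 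If $\bar{F}$ fixes these points then $F$ restricts to an isomorphism $M_{\rm Dol}(X,r)\cong M_{\rm Dol}(Y,r)$, while if it swaps them then $F$ restricts to $M_{\rm Dol}(X,r)\cong M_{\rm Dol}(Y',r)$, where $Y'$ is the Riemann surface attached to $Y$ and $\gamma$. In either case the Torelli theorem for the moduli space of Higgs bundles, valid for $g\geq3$, yields $X\cong Y$ or $X\cong Y'$ respectively. The principal obstacle is the first step, showing that the abstract complex manifold remembers its projection to $\mathbb{P}^1$; this is exactly where part~(1) is indispensable, since the weight~$1$ normal bundle characterizes the de Rham and preferred sections as the minimal rational curves whose deformation space recovers the base. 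I would adapt the argument of~\cite{BGHL} for the Deligne--Hitchin space, the novelty being that the gluing is now performed by an arbitrary element $\gamma$ of the mapping class group rather than by complex conjugation.
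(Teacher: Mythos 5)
The paper itself contains no internal proof of this theorem: it is quoted from \cite{H}, with the Torelli part obtained, as the paper says, ``by applying the techniques in \cite{BGHL}'', and at the level of strategy your plan follows that same route (two-chart normal-bundle computation for the weight-one property, curve-theoretic recovery of the fibration plus a Dolbeault Torelli theorem for the Torelli statement). However, the decisive step of your part~(1) has a genuine gap. You assume that the hypercohomology frames of $T_\pi$ along $s_{\lambda_0}$ extend holomorphically across the special fibres, so that everything reduces to a clutching function $g(\lambda)$ on $\mathbb{C}^*$ which you then claim is gauge-equivalent to $\lambda\cdot\mathrm{Id}$. That extension fails as stated: the de Rham section is continued over $\lambda=0$ (and, after the gluing, over $\lambda=\infty$) by Simpson's limit theorem (Theorem~\ref{thm4.3} and Corollary~\ref{lem4.4} above), so at $\lambda=0$ it passes through the $S$-equivalence class of the associated graded Higgs bundle $(\mathrm{Gr}_F(E),\mathrm{Gr}_F(\nabla))$ of a Simpson filtration, whose underlying holomorphic bundle is in general \emph{not} $E$; the family of complexes $\mathrm{End}(E)\xrightarrow{\lambda\lambda_0^{-1}D^{\lambda_0}}\mathrm{End}(E)\otimes\Omega^1_X$ therefore does not compute the relative tangent space at the endpoints, and no frame of the kind you postulate exists there. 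Equivalently, $s_{\lambda_0}$ is the closure of a $\mathbb{C}^*$-orbit, and the Bia{\l}ynicki-Birula weights on the tangent space at the limiting fixed point are not all equal to~$1$; a local computation at the two ends does not by itself exclude an unbalanced splitting of the same degree, such as one containing $\mathcal{O}_{\mathbb{P}^1}(2)\oplus\mathcal{O}_{\mathbb{P}^1}$. Ruling this out is precisely the global content of the proof in \cite{H}; your sketch names this obstacle but supplies no mechanism for it.

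Part~(2) has the right skeleton, but two of its steps are asserted where \cite{BGHL} (and \cite{H}) must argue. First, that every weight-one rational curve is a section: curves contained in a fibre are indeed excluded, since the normal bundle of such a curve would surject onto the restricted normal bundle of the fibre, a trivial line bundle, and $\mathcal{O}_{\mathbb{P}^1}(1)^{\oplus n}$ admits no surjection onto $\mathcal{O}_{\mathbb{P}^1}$; but this does not exclude multisections of higher degree over $\mathbb{P}^1$, and the standard fix is to work with the component of the Douady space of sections containing the de Rham sections, as in \cite{BGHL}. Second, your claim that the fibres over $0$ and $\infty$ are ``distinguished biholomorphically'' needs a named invariant: the fibres over $\mathbb{C}^*$ are Stein, being analytically isomorphic via the Riemann--Hilbert correspondence to the affine Betti moduli space, while the two Dolbeault fibres contain the compact, positive-dimensional nilpotent cone. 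Note also that for a general $\gamma$ the twistor space carries no antiholomorphic involution $\sigma$, so any ingredient of \cite{BGHL} resting on the real structure must be replaced; your outline avoids $\sigma$, which is the correct instinct. With these repairs your case analysis (whether the induced automorphism of $\mathbb{P}^1$ fixes or swaps $0$ and $\infty$, yielding $X\cong Y$ or $X\cong Y'$) is exactly the intended conclusion, with the hypothesis $g\geq3$ entering through the Torelli theorem invoked for the Dolbeault fibre.
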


\subsection*{Acknowledgements}

The author would like to thank his thesis supervisor Professor Carlos Simpson for much help on the understanding of this subject and for the kind guidance and useful discussions, and thank Professor Jiayu Li for continuous encouragement. Moreover, the author would like to express his deep appreciation to the anonymous referees for many valuable suggestions. The author is partial supported by China Scholarship Council (No.~201706340032).

\pdfbookmark[1]{References}{ref}
\LastPageEnding

\end{document}